\documentclass[11pt,letterpaper]{article}
\usepackage[margin=1in]{geometry}

\usepackage{amsfonts, amsmath, amsthm, amssymb}
\usepackage{color}
\usepackage{microtype}
\usepackage[hidelinks]{hyperref}
\hypersetup{pdftitle={On Convergence to Equilibrium for the Boltzmann Hierarchy}, pdfauthor={Joseph K. Miller}}
\usepackage{mathrsfs}
\usepackage[nameinlink]{cleveref}

\newcommand{\Rbb}{\mathbb{R}}
\newcommand{\Sbb}{\mathbb{S}}
\newcommand{\Nbb}{\mathbb{N}}
\newcommand{\Xbb}{\mathbb{X}}
\newcommand{\Pbb}{\mathbb{P}}
\newcommand{\Tbb}{\mathbb{T}}
\newcommand{\Zbb}{\mathbb{Z}}
\newcommand{\Ecal}{\mathcal{E}}
\newcommand{\Hcal}{\mathcal{H}}
\newcommand{\Dcal}{\mathcal{D}}
\newcommand{\Sym}{\textrm{Sym}}

\newtheorem{lemma}{Lemma}
\newtheorem{theorem}{Theorem}
\newtheorem{proposition}{Proposition}
\newtheorem{corollary}{Corollary}
\theoremstyle{remark}

\theoremstyle{definition}
\newtheorem{definition}{Definition}
\newtheorem{assumption}{Assumption}

\title{On Convergence to Equilibrium for the Boltzmann Hierarchy}
\author{Joseph K. Miller\\
\small Massachusetts Institute of Technology, Cambridge, MA, USA\\
\small\href{mailto:jkm314@mit.edu}{\texttt{jkm314@mit.edu}}}
\date{October 16, 2026}

\begin{document}
\maketitle

\begin{abstract}
We study relaxation of correlated solutions of the hard-cutoff Boltzmann hierarchy toward factorized Maxwellian equilibrium. We prescribe the Maxwellian's mass, momentum, and energy in each full particle coordinate, conditional on the others. Centering removes these conserved modes, but collisions still couple each centered level to the next. To control this forcing uniformly in particle number, we combine adapted second-order Sobolev tensor norms with geometric activity weights and prove
\[
\frac12\frac{d}{dt}\Ecal_z(H(t))^2
+(\lambda-\sqrt{2}C_\Gamma z)\Dcal_z(H(t))^2\le0,
\]
where $H$ is the rescaled centered hierarchy, $\Ecal_z,\Dcal_z$ measure its energy and dissipation, and the coercivity and collision constants $\lambda,C_\Gamma$ are independent of particle number. For small activity $z$, dissipation absorbs the coupling and gives contraction and uniqueness in the hierarchy energy. Time-dependent weights yield level-proportional exponential decay. For global solutions, densities and observable generating functions converge to equilibrium, and connected pair correlations decay at twice the one-particle rate. The estimates apply to existing energy solutions. General existence remains an assumption, while statistical mixtures, including continuous laws, of sufficiently regular one-particle solutions uniformly close to the reference Maxwellian, with common conserved moments, provide examples.

\end{abstract}

\noindent\textbf{Keywords:} Boltzmann hierarchy; relaxation to equilibrium; hypocoercivity; correlated gases; energy estimates.

\tableofcontents

\section{Introduction}\label{sec: intro}

We study the statistics of possibly correlated rarefied gases undergoing binary collisions. Our reference equilibrium describes independent particles with uniform positions on $\Tbb^3$ (with normalized spatial measure) and Maxwellian velocity density $\mu(v):=(2\pi)^{-3/2}e^{-|v|^2/2}$. Their joint density is the product $\mu(v_1)\cdots\mu(v_k)$, abbreviated $\mu^{\otimes k}$. We allow correlated initial states, whose joint densities need not factorize; relaxation must therefore be measured at the level of the joint statistics.

\emph{For correlated initial data close to the factorized Maxwellian equilibrium, under what conditions do the joint densities approach this equilibrium, and at what rates?}

To study this relaxation, we use the Boltzmann hierarchy. Its unknowns $f_k(t,x_1,v_1,\ldots,x_k,v_k)$, $k\ge1$, are joint probability densities for the positions and velocities of $k$ particles at time $t$, symmetric under relabeling identical particles. The one-particle density $f_1$ gives individual-particle statistics, while $f_2$ also records pair correlations. At level $k$, free transport is coupled to collisions between one of the $k$ particles and an additional particle:
\[
\partial_t f_k+\sum_{j=1}^k v_j\cdot\nabla_{x_j}f_k
=\sum_{j=1}^k Q_j^{k+1}[f_{k+1}],\qquad k\ge1.
\]
Here $Q_j^{k+1}$ is the collision operator for particles $j$ and $k+1$, with the partner evaluated at $x_j$ and its velocity integrated out.

The Boltzmann hierarchy arises formally from the BBGKY equations for particle marginals in the Boltzmann--Grad limit. Statistically independent particles have product joint densities, written $f_k=f^{\otimes k}$; for these factorized solutions, the hierarchy reduces to the Boltzmann equation
\[
\partial_t f+v\cdot\nabla_x f=Q[f\otimes f],
\]
where $Q$ is the one-particle collision integral acting on a pair density. With correlations, the equation for $f_1$ depends on $f_2$ and does not close at the one-particle level.

For asymptotically independent initial data, deriving the Boltzmann equation requires controlling microscopic collision correlations and recovering factorization in the limit. This is one aspect of Hilbert's sixth problem. Lanford's short-time hard-sphere derivation \cite{CIP1994} compares collision-history expansions of the BBGKY and Boltzmann hierarchies. Gallagher, Saint-Raymond, and Texier \cite{GST2014} develop this approach, treating a class of short-range potentials with truncated marginals and cluster estimates. Deng, Hani, and Ma \cite[Theorem~1 and Section~1.3.1]{DengHaniMa2024} extend the hard-sphere derivation to intervals on which the Boltzmann equation has a regular solution: they compare BBGKY expansions with Boltzmann-equation iterates, using cumulant estimates to control correlation errors over successive time intervals. Their limiting joint densities are the products $f^{\otimes k}$ generated by the Boltzmann equation, which form a factorized solution of the Boltzmann hierarchy. Here we study the limiting hierarchy directly, allowing correlated solutions.

Ukai \cite{Ukai1974} proved global classical solutions of the Boltzmann equation for small inhomogeneous Maxwellian perturbations. Mouhot--Neumann \cite{MN06} proved quantitative coercivity of the linearized collision and transport operator in adapted Sobolev norms after removing global invariant modes; see also \cite{CIP1994,Glassey96}.

We extend the near-equilibrium analysis of Esposito and Pulvirenti \cite{PE89} with a direct energy--dissipation estimate for the kernels of \Cref{A: B regularity} in adapted $H^2_{x,v}$ tensor norms. Their work gives global statistical solutions, uniqueness in a controlled class, and level-proportional exponential decay for periodic hard spheres. Our estimate controls existing energy solutions and their differences by the initial hierarchy energy in the same norm throughout evolution, without factorization or a statistical-mixture representation. Preserving initial regularity was discussed in \cite[Remark~2]{PE89}. Their weighted velocity-supremum and higher spatial Sobolev assumptions are not nested with ours.

\subsection{Main results}
We consider macroscopically admissible hierarchies: the mass, momentum, and energy integrated over each particle's full position--velocity coordinate match those of $\mu$, conditional on the remaining particles (\Cref{def: macro admissible}). Expanding around $\mu^{\otimes k}$ then removes the conserved modes. Write $H=(h_k)_{k\ge1}$ for the Maxwellian-rescaled centered coefficients (\cref{eqn: inversion formula,eqn: f k hat}).

Collisions force each level from the next, so the summed energy inequality requires a correlated-pair bound uniform in particle number. The energy norm $\|\cdot\|_{\Hcal_k}$ uses the adapted one-particle inner product in each slot; $\|\cdot\|_{\Dcal_k}$ combines one-slot dissipation (\cref{eqn: Hcal 2 k,eqn: Hs Lambda k}). Fix an activity parameter $z>0$ controlling growth across levels, and define
\[
\Ecal_z(H)^2:=\sum_{k\ge1}z^{-2k}\|h_k\|_{\Hcal_k}^2,
\qquad
\Dcal_z(H)^2:=\sum_{k\ge1}z^{-2k}\|h_k\|_{\Dcal_k}^2.
\]
Let $\lambda,c_0>0$ be the one-particle coercivity and dissipation-comparison constants, and $C_\Gamma>0$ the pair-collision constant. For the hard-cutoff kernels of \Cref{A: B regularity}, our central estimate is
\[
\frac12\frac{d}{dt}\Ecal_z(H(t))^2
+(\lambda-\sqrt{2}C_\Gamma z)\Dcal_z(H(t))^2\le0.
\]
This inequality holds in integrated form for every energy solution as defined in \Cref{def:Xz-solution}. Geometric weights leave a factor $z$ in the coupling, so dissipation absorbs the interaction when $0<z<z_\ast:=\lambda/(\sqrt{2}C_\Gamma)$. For factorized initial perturbations $h_k(0)=u^{\otimes k}$, with $u$ in the one-particle energy space $\Hcal$, the energy becomes
\[
\Ecal_z(H(0))^2=\sum_{k\ge1}(\|u\|_{\Hcal}/z)^{2k}.
\]
It is finite exactly when $\|u\|_{\Hcal}<z$. Thus small activity restricts the data, but the finite value of $\Ecal_z(H(0))$ itself need not be small.

With $\eta_z:=c_0(\lambda-\sqrt{2}C_\Gamma z)>0$, \Cref{thm: main thm,thm:theorem-2} give
\begin{align*}
\Ecal_z(H(t))&\le e^{-\eta_z t}\Ecal_z(H(0)),\\
\|h_k(t)\|_{\Hcal_k}&\le z^k e^{-k\eta_z t}\Ecal_z(H(0)),\qquad k\ge1.
\end{align*}
Both estimates hold throughout the existence interval and for differences of solutions, giving uniqueness. The stronger second bound follows by retaining the level-dependent dissipation through time-dependent weights.

For global macroscopically admissible solutions, these estimates describe equilibration in the original statistical variables. Reconstruction gives $f_k(t)\to\mu^{\otimes k}$ in the mixed Sobolev tensor norm, with an explicit bound at rate $\eta_z$ (\Cref{cor:fk-to-muk-kdep}). For a one-particle observable $\phi(x,v)$, the quantity $\langle\phi^{\otimes k},f_k(t)\rangle$ is the average of the product of $\phi$ evaluated at $k$ particles. The dynamical partition function collects these moments across all levels in the exponential generating series
\begin{align*}
\mathcal Z(\beta,\mu,\phi;F(t))
&:=\sum_{k=0}^\infty\frac{\beta^k}{k!}\langle\phi^{\otimes k},f_k(t)\rangle,\qquad f_0=1,\\
&\longrightarrow e^{\beta\langle\phi,\mu\rangle}\qquad(t\to\infty).
\end{align*}
For real $\beta$ and real measurable $\phi$ with $\|\phi\mu^{1/2}\|_{L^2}<\infty$, the series converges absolutely and its departure from equilibrium decays at rate $\eta_z$, with a bound proportional to $\Ecal_z(H(0))$ (\Cref{thm:Z-convergence}). The logarithm of $\mathcal Z$ is eventually well defined and converges to $\beta\langle\phi,\mu\rangle$ at the same rate.

For general initial hierarchies, existence remains an assumption. Correlated examples arise from statistical mixtures, including continuous laws, of sufficiently regular one-particle solutions uniformly close to the reference Maxwellian, each sharing its mass, momentum, and energy (\Cref{prop:mixture-existence}).

\Cref{sec:hierarchy-setup,sec: main theorems} introduce the centered hierarchy, solution class, and main results. \Cref{sec:operator-estimates,sec: Energy Inequality} establish the operator and energy estimates. The density and observable consequences follow in \Cref{sec:consequences}, with combinatorial identities in \Cref{sec: Combinatorial Lemmas}.

\section{The hierarchy and equilibrium-centered decomposition}\label{sec:hierarchy-setup}
Throughout the paper, $\Tbb^3=(\Rbb/2\pi\Zbb)^3$ carries normalized Haar measure $dx=(2\pi)^{-3}d^3x$, so $\int_{\Tbb^3}dx=1$. Velocity integration uses Lebesgue measure, and angular integration uses the usual surface measure on $\Sbb^2$. We write $dX_k=\prod_{j=1}^k dx_j$, $dV_k=\prod_{j=1}^k dv_j$, and $dZ_k=dX_k\,dV_k$. All spatial integrals and norms use this convention; in particular, the velocity Maxwellian $\mu$ also has unit mass on $\Tbb^3\times\Rbb^3$. All solution functions and Sobolev spaces are real-valued, and Hilbert tensor products are over $\Rbb$; the centered coefficients may have either sign. Fourier calculations use the usual complexification.

\subsection{Collision law and Maxwellian equilibrium}\label{sec: Boltzmann Equation}
We first recall the one-particle collision law, whose perturbation operators will reappear in the centered hierarchy. For background on the Boltzmann equation, see Villani's review \cite{Villani2002}. For a probability density $f(t)\in L^1(\Tbb_x^3\times\Rbb^3_v)$, the Boltzmann equation reads
\begin{equation}\label{eqn: Boltzmann equation}
    \partial_t f(t,x,v) + v\cdot \nabla_x f(t,x,v) = Q[f\otimes f](t,x,v).
\end{equation}
To include correlated densities, we regard the collision integral as a linear operator on pair functions. Initially the inputs are smooth with rapid velocity decay; the collision and trace estimates in \Cref{sec:operator-estimates} will extend the operator to the energy spaces. Set
\begin{equation}\label{eqn: Q def}
    Q[f_2](x,v) = \int_{\Rbb^3_{v_1}} \int_{\Sbb^{2}} B(v-v_1,\sigma) \bigl[ f_2(x,v^\sigma,x,v_1^\sigma) -f_2(x,v,x,v_1) \bigr]d\sigma \, dv_1.
\end{equation}
For $\sigma\in\Sbb^2$ and $v,v_1\in\Rbb^3$, the post-collisional velocities in this integral are
\begin{equation}\label{eqn: post collision config}
    \begin{pmatrix}
        v^\sigma \\
        v_1^\sigma
    \end{pmatrix} = \frac{1}{2}
    \begin{pmatrix}
        v+v_1 + |v- v_1|
        \sigma \\
        v+ v_1 - |v-v_1| \sigma
    \end{pmatrix},
\end{equation}
and the collision kernel is
\begin{equation}\label{eqn: B def}
    B(u, \sigma) = |u|^\gamma b\left( \frac{u}{|u|} \cdot \sigma\right), \qquad u\ne0.
\end{equation}
Here $b$ is the angular collision profile, evaluated at the scattering-angle cosine $\cos\theta=(u/|u|)\cdot\sigma$. We impose \Cref{A: B regularity} below; the value of $B$ at $u=0$ is immaterial to the collision integrals.
\begin{assumption}\label{A: B regularity}
    We assume $b\in\mathcal C^1([-1,1])$, $b\ge0$, $b\not\equiv0$, and $\gamma\in[0,1]$.
    This class consists of angular-cutoff hard potentials, including Maxwell molecules when $\gamma=0$ and hard spheres when $\gamma=1$ and $b$ is constant, up to normalization \cite{Grad1963}.
\end{assumption}
We study relaxation toward the spatially homogeneous equilibrium given by the standard Maxwellian
\begin{equation}\label{eqn: mu definition}
    \mu(v)  = \frac{1}{(2\pi)^{3/2}}e^{-|v|^2/2}.
\end{equation}

\subsection{One-particle perturbation operators}\label{subsec:perturbation-operators}
The rescaling $f=\mu+\widehat f\mu^{1/2}$ separates the linear relaxation toward $\mu$ from the interaction of perturbations. Substitution into \cref{eqn: Boltzmann equation} gives the exact equation
\begin{equation}\label{eqn: linearized Boltzmann equation}
    \partial_t \widehat{f} + v \cdot \nabla_x \widehat{f} = L \bigl[\widehat{f} \bigr] + \Gamma\bigl[\widehat{f} \otimes \widehat{f} \bigr].
\end{equation}
The terms with one Maxwellian factor define the linearized operator
\begin{equation}\label{eqn: L def}
    L\bigl[\widehat{f}\bigr](x,v)
    := \mu^{-1/2}(v)\left(
    Q\bigl[(\mu^{1/2}\widehat{f})\otimes\mu\bigr](x,v)
    +Q\bigl[\mu\otimes(\mu^{1/2}\widehat{f})\bigr](x,v)
    \right).
\end{equation}
The remaining term is expressed through the linear pair operator
\begin{equation}\label{eqn: Gamma def}
    \Gamma\bigl[\widehat{f}_2\bigr](x,v) := \mu^{-1/2}(v)\, Q\bigl[ \mu_2^{1/2} \widehat{f}_2 \bigr](x,v).
\end{equation}
Here $\mu_2(v_1,v_2)=\mu(v_1)\mu(v_2)$ and $Q$ is defined in \cref{eqn: Q def}. Although $\Gamma$ is linear in its pair-function argument, the map $h\mapsto\Gamma[h\otimes h]$ in \cref{eqn: linearized Boltzmann equation} is quadratic. The operator $\Gamma$ decomposes into the velocity collision operator $\Gamma_{\rm loc}$ and the spatial diagonal restriction $\text{Tr}_{1,2}$:
\begin{equation}
    \Gamma = \text{Tr}_{1,2} \;  \Gamma_{\rm loc}, \qquad \text{Tr}_{1,2} [G](x) = G(x,x).
\end{equation}
The standard decomposition of the linearized operator \cite{Hilbert1912KineticGasTheory,Carleman1933Boltzmann,Grad1963,Ukai1974,MN06} separates a compact operator $K$ on $L^2_v$ from a positive multiplication operator $\Lambda$:
\begin{equation}\label{eqn: L decomposition}
    L = K - \Lambda.
\end{equation}
The multiplication operator $\Lambda$ introduces the collision-frequency weight used in our dissipation norms:
\begin{equation}\label{eqn: nu def}
    \nu(v) :=  \int_{\Rbb_{v_1}^3} \int_{\Sbb^2} B(v-v_1, \sigma) \mu(v_1) \, d\sigma \, dv_1, \qquad \Lambda f (x,v) := \nu(v) f(x,v).
\end{equation}
Under \Cref{A: B regularity}, the collision frequency satisfies, with $\langle v\rangle=(1+|v|^2)^{1/2}$,
\begin{equation}\label{eqn: collision frequency bounds}
    0<\nu_0,\qquad c_\nu\langle v\rangle^\gamma\le\nu(v)\le C_\nu\langle v\rangle^\gamma,\qquad \nu(v)\ge\nu_0.
\end{equation}
The bounds \cref{eqn: collision frequency bounds} follow from direct computation under \Cref{A: B regularity}; see, e.g., \cite[Chapter~2]{CIP1994}.
Combining collisions with transport gives the one-particle generator
\begin{equation}\label{eqn:A operator def}
    A = L -v \cdot \nabla_x.
\end{equation}
Mouhot--Neumann \cite{MN06} make the decay generated by $A$ quantitative in an adapted Sobolev inner product after removing the spatially constant invariant modes; see also \cite{CIP1994,Glassey96} for the linearized theory. We will use this particular inner product when tensorizing the estimate.

\subsection{Density hierarchies and symmetrization}\label{subsec:density-hierarchies}
At the hierarchy level, the pair input to the collision operator is part of a density with additional particle coordinates. We work with symmetric probability densities at each level:
\begin{equation}\label{eqn: P k def}
    \Pbb_k := \Bigg\{ 0\leq f_k \in L^1\Bigl((\Tbb_x^3 \times \Rbb^3_v)^k\Bigr) \, \Bigg| \, \, \big\Vert f_k \big\Vert_{L^1} =1, \, \, \text{and} \, \, \pi f_k = f_k\, , \, \forall \pi \in S_k \Bigg\},
\end{equation}
where $S_k$ is the permutation group on $\{1,\dots,k\}$, acting by
\begin{equation}\label{eqn: permutation action}
    \pi f_k( x_1, v_1, \ldots, x_k,v_k) = f_k( x_{\pi^{-1}(1)}, v_{\pi^{-1}(1)}, \ldots, x_{\pi^{-1}(k)},v_{\pi^{-1}(k)} ).
\end{equation}
We abbreviate the particle configurations by
\begin{equation*}
    X_k = (x_1, \dots, x_k), \qquad V_k=(v_1, \dots, v_k).
\end{equation*}
Products of lower-order symmetric functions need not be symmetric in all particle labels. The following average restores that symmetry and will let us combine centered coefficients with Maxwellian factors.
\begin{definition}\label{def: Sym operator}
    Let $k \geq 1$ and let $f_k = f_k(x_1, v_1, \dots, x_k, v_k)$ be a function on $(\Tbb^3 \times \Rbb^3)^k$. The symmetrization operator $\Sym$ is defined by
    \[
        (\Sym f_k)(X_k, V_k)
        =
        \frac{1}{k!}
        \sum_{\sigma \in S_k}
        \sigma f_k(X_k, V_k),
    \]
    where $S_k$ denotes the symmetric group on $k$ elements.
\end{definition}

We will study the Boltzmann hierarchy
\begin{equation}\label{eqn: boltzmann hierarchy}
    \partial_t f_k + \sum_{j=1}^k v_j \cdot \nabla_{x_j} f_k = \sum_{j=1}^k Q_j^{k+1} [f_{k+1}], \qquad k =1, 2, \dots.
\end{equation}
For each $j\in\{1,\ldots,k\}$, the collision operator acts on a $(k+1)$-particle function $H$ by
\begin{equation}\label{eqn: hierarchy collision operator}
\begin{aligned}
(Q_j^{k+1}H)(X_k,V_k)
={}&\int_{\Rbb^3}\int_{\Sbb^2}B(v_j-v_*,\sigma)
\Bigl[
H(X_k,x_j;V_k^{j\to v_j^\sigma},v_*^\sigma)\\
&\hspace{40mm}-H(X_k,x_j;V_k,v_*)
\Bigr]\,d\sigma\,dv_*.
\end{aligned}
\end{equation}
Here $V_k^{j\to w}$ is $V_k$ with its $j$th entry replaced by $w$, and $(v_j^\sigma,v_*^\sigma)$ is the post-collisional pair from \cref{eqn: post collision config} applied to $(v_j,v_*)$. The partner position is evaluated at $x_{k+1}=x_j$; only its velocity is integrated. The corresponding equilibrium hierarchy is the factorized sequence
\begin{equation}\label{eqn: mu k definition}
    \mu_k = \mu^{\otimes k}.
\end{equation}

\subsection{The centered decomposition and its inverse}\label{subsec:centered-decomposition}
To separate deviations from equilibrium at each order, we expand over the particles carrying a centered coefficient and place Maxwellian factors in the remaining slots. With $f_0=g_0=\mu_0=1$, this gives the centered decomposition\footnote{The author independently derived this symmetrized formula before learning of the equivalent subset-convolution formulation of Esposito and Pulvirenti \cite[Section~3, (3.5)--(3.10)]{PE89}.} for each $k\ge1$:
\begin{equation}\label{eqn: relative cumulant}
    f_k = \sum_{p = 0}^k \binom{k}{p} \Sym\left( g_p \otimes \mu_{k-p} \right).
\end{equation}
The first two coefficients are
\[
g_1=f_1-\mu,\qquad
g_2=f_2-f_1\otimes\mu-\mu\otimes f_1+\mu\otimes\mu.
\]
Thus $g_2$ remains after subtracting the equilibrium and one-particle corrections. The general inverse below uses the same inclusion--exclusion over subsets of particle labels: $\Sym$ averages subsets of a given size, and the binomial factor counts them. This Boolean-lattice inversion differs from the partition-lattice moment--cumulant relations and tensor treatment of generalized $k$-statistics described by Speed \cite{Speed1983,Speed1986}. Connected correlations arise from the logarithm of the generating function in \Cref{rem:gluing}.
\begin{proposition}\label{prop: Mobius inversion}
    The decomposition \cref{eqn: relative cumulant} uniquely determines $G=(g_k)_{k\ge1}$ from $F=(f_k)_{k\ge1}$ through the M\"obius inversion
    \begin{equation}\label{eqn: inversion formula}
    g_k = \sum_{p=0}^k (-1)^{k-p} \binom{k}{p} \Sym\left( f_p \otimes \mu_{k-p}\right).
\end{equation}
\end{proposition}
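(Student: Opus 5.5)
The plan is to rewrite both \cref{eqn: relative cumulant} and \cref{eqn: inversion formula} as sums over subsets of particle labels, and then reduce the claim to M\"obius inversion on the Boolean lattice of $\{1,\dots,k\}$. For $S\subseteq\{1,\dots,k\}$, write $f_S$ for $f_{|S|}$ evaluated at the coordinates $(x_j,v_j)_{j\in S}$ in increasing order, and $\mu_{S^c}:=\prod_{j\notin S}\mu(v_j)$. Because $f_p$ and $\mu_{k-p}$ are each symmetric, $\sigma(f_p\otimes\mu_{k-p})$ depends only on the image set $\sigma(\{1,\dots,p\})$. Each $p$-subset arises from exactly $p!(k-p)!$ permutations. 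Hence
\[
\binom{k}{p}\Sym(f_p\otimes\mu_{k-p})=\sum_{|S|=p}f_S\,\mu_{S^c}.
\]
The same identity holds with $g_p$ in place of $f_p$, provided $g_p$ is symmetric. This hypothesis will be carried along by induction.

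\textbf{Uniqueness.} In \cref{eqn: relative cumulant}, the $p=k$ term is exactly $g_k$, since $\Sym g_k=g_k$ for symmetric $g_k$. Thus
\[
g_k=f_k-\sum_{p<k}\binom{k}{p}\Sym(g_p\otimes\mu_{k-p}),
\]
which determines $g_k$ recursively from $f_k$ and $g_0,\dots,g_{k-1}$. Starting from $g_0=1$, induction shows that each $g_k$ is uniquely defined and symmetric: the right-hand side is symmetric because $f_k$ and each $\Sym(\cdot)$ are symmetric. So $G$ is uniquely determined by $F$.

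\textbf{Inversion.} By the subset rewriting, \cref{eqn: relative cumulant} reads $f_{[k]}=\sum_{S\subseteq[k]}g_S\mu_{[k]\setminus S}$. The same identity holds for every subset $T$ in place of $[k]$, after relabeling, because $f$ and $g$ are symmetric. Divide by $\mu_{[k]}$, which is positive, and set $\tilde f_T:=f_T/\mu_T$ and $\tilde g_S:=g_S/\mu_S$. The relation becomes $\tilde f_T=\sum_{S\subseteq T}\tilde g_S$ for all $T$, as functions of the full $k$-particle configuration. Standard Boolean M\"obius inversion then gives $\tilde g_{[k]}=\sum_{T\subseteq[k]}(-1)^{k-|T|}\tilde f_T$. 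To verify it directly, substitute and use $\sum_{S\subseteq T\subseteq[k]}(-1)^{k-|T|}=\delta_{S,[k]}$.

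Multiplying back by $\mu_{[k]}$ and regrouping the subsets by their size $p$ via the counting identity yields exactly \cref{eqn: inversion formula}.

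The only delicate step is the counting identity between $\Sym$ and the subset sums. It depends on the symmetry of each factor, which is why symmetry of $g_p$ must be established first through the recursion.
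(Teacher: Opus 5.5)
Your proof is correct, but it takes a different route from the paper's.

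The paper never unpacks $\Sym$. It substitutes the forward expansion $f_p=\sum_{q\le p}\binom{p}{q}\Sym(g_q\otimes\mu_{p-q})$ into the proposed inverse. It then collapses the nested averages with the associativity lemma $\Sym(\Sym(h_a\otimes\mu_b)\otimes\mu_c)=\Sym(h_a\otimes\mu_{b+c})$. Finally it removes all but the top term with the scalar identity $\sum_{p=q}^k(-1)^{k-p}\binom{k}{p}\binom{p}{q}=\delta_{q,k}$. This shows the inverse formula is a left inverse of the forward map, i.e.\ uniqueness, with symmetry of the $g_q$ taken for granted.

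You instead first rewrite everything as subset sums and invert on the Boolean lattice. That rewriting is the paper's \Cref{lem:Sym-subset-representation}, proved there by the same $p!(k-p)!$ count. The paper's binomial identity is just the size-counted form of your identity $\sum_{S\subseteq T\subseteq[k]}(-1)^{k-|T|}=\delta_{S,[k]}$.

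What each route buys: the paper's version is a short verification inside the operator formalism. Yours needs every $g_p$ to be symmetric before the subset rewriting applies, which is why you run the triangular recursion first. In exchange, the recursion also constructs $G$ for every symmetric $F$. The paper leaves this implicit, although \Cref{lem: g hierarchy derivation} defines $G$ through the decomposition. The recursion also makes explicit that uniqueness holds among symmetric sequences; without that restriction only $\Sym g_k$ is determined. And your argument produces the subset form that the paper itself uses in \Cref{lem: g hierarchy derivation} and \Cref{lem: marginal of relative cumulants}.

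Two minor remarks. Dividing by $\mu_{[k]}$ is harmless but unnecessary: since $\mu_{T\setminus S}\,\mu_{[k]\setminus T}=\mu_{[k]\setminus S}$, the inversion runs with the Maxwellian weights in place. Also, under the convention \cref{eqn: permutation action}, the set that determines $\sigma(f_p\otimes\mu_{k-p})$ is the preimage $\sigma^{-1}(\{1,\dots,p\})$ rather than the image, which does not affect the count.
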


Writing the symmetrization as a sum over subsets records which particles carry centered coefficients. This form makes the collision expansion and the moment cancellations explicit.

\begin{lemma}\label{lem:Sym-subset-representation}
Let $k \ge 1$ and $0 \le p \le k$. 
For any symmetric function $h_p = h_p\bigl((x_i,v_i)_{i=1}^p\bigr)$ on $(\Tbb^3 \times \Rbb^3)^p$, define
\[
\mu_{k-p}(V_{k-p}) := \prod_{i=1}^{k-p} \mu(v_i).
\]
Then the symmetrization operator satisfies
\begin{equation}\label{eq:Sym-subset}
\Sym\bigl(h_p \otimes \mu_{k-p}\bigr)(X_k,V_k)
=
\frac{1}{\binom{k}{p}}
\sum_{\substack{S \subset \{1,\dots,k\}\\ |S|=p}}
h_p\bigl((x_i,v_i)_{i\in S}\bigr)
\prod_{i\notin S} \mu(v_i).
\end{equation}
\end{lemma}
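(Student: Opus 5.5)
The plan is to compute the average over $S_k$ directly and group permutations according to the set of positions they send the first $p$ labels to. Write $\Phi:=h_p\otimes\mu_{k-p}$, so that
\[
\Phi(X_k,V_k)=h_p\bigl((x_i,v_i)_{i=1}^p\bigr)\prod_{i=p+1}^k\mu(v_i).
\]
By \cref{eqn: permutation action}, for $\sigma\in S_k$ we have $(\sigma\Phi)(X_k,V_k)=\Phi\bigl((x_{\sigma^{-1}(i)},v_{\sigma^{-1}(i)})_{i=1}^k\bigr)$. Thus slot $i\le p$ of $h_p$ receives the coordinate with label $\sigma^{-1}(i)$, and each slot $i>p$ receives a factor $\mu(v_{\sigma^{-1}(i)})$. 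Set $S_\sigma:=\sigma^{-1}(\{1,\dots,p\})$, which is a $p$-element subset of $\{1,\dots,k\}$. Then
\[
(\sigma\Phi)(X_k,V_k)=h_p\bigl((x_{\sigma^{-1}(1)},v_{\sigma^{-1}(1)}),\dots,(x_{\sigma^{-1}(p)},v_{\sigma^{-1}(p)})\bigr)\prod_{i\notin S_\sigma}\mu(v_i).
\]
The Maxwellian product depends only on $S_\sigma$, because it is a commutative product over the complementary labels.

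Next I use the symmetry of $h_p$. The arguments of $h_p$ above are the coordinates indexed by $S_\sigma$, listed in some order. Since $h_p$ is invariant under all permutations of its $p$ slots, its value equals $h_p\bigl((x_i,v_i)_{i\in S_\sigma}\bigr)$, with any fixed ordering of $S_\sigma$. Hence $\sigma\Phi$ depends on $\sigma$ only through $S_\sigma$, and
\[
\sigma\Phi=h_p\bigl((x_i,v_i)_{i\in S_\sigma}\bigr)\prod_{i\notin S_\sigma}\mu(v_i).
\]

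Finally I count the fibers of $\sigma\mapsto S_\sigma$. For each $p$-subset $S$, the permutations with $S_\sigma=S$ are exactly those mapping $S$ bijectively onto $\{1,\dots,p\}$ and its complement onto $\{p+1,\dots,k\}$. There are $p!\,(k-p)!$ of them, and every $p$-subset arises. Summing over $\sigma$ and dividing by $k!$ gives the prefactor $p!(k-p)!/k!=\binom{k}{p}^{-1}$, which is \cref{eq:Sym-subset}. The edge cases $p=0$ and $p=k$ are included, with $h_0$ a constant and the empty product equal to $1$.

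There is no real obstacle here. The only point needing care is the inverse in the permutation action \cref{eqn: permutation action}: one must check that the set of labels feeding $h_p$ is $\sigma^{-1}(\{1,\dots,p\})$, not $\sigma(\{1,\dots,p\})$. Either choice yields a bijection from the fibers to $p$-subsets with the same fiber size, so the result is unaffected.
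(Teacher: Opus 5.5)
Your proposal is correct and follows the same route as the paper. Both group the permutation average by the $p$-subset of labels fed into $h_p$, use the symmetry of $h_p$ and of the Maxwellian product so that the $p!\,(k-p)!$ permutations in each fiber give the same term, and obtain the prefactor $p!\,(k-p)!/k!=\binom{k}{p}^{-1}$. Your explicit check that this subset is $\sigma^{-1}(\{1,\dots,p\})$ under the paper's permutation action is a detail the paper leaves implicit.
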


The proofs of \Cref{prop: Mobius inversion} and \Cref{lem:Sym-subset-representation} are given in \Cref{sec: Combinatorial Lemmas}. These finite identities allow us to derive the transformed evolution directly at each level.

\subsection{Derivation of the centered hierarchy}\label{subsec:centered-evolution}
Applying the centered transformation to a solution $F=(f_k)_{k\ge1}$ of \cref{eqn: boltzmann hierarchy} separates its collision term into interactions with the Maxwellian background and a coupling to the next centered level:
\begin{equation}\label{eqn: g hierarchy}
\begin{aligned}
    \partial_t g_k+\sum_{j=1}^k v_j\cdot\nabla_{x_j}g_k
    ={}&\sum_{j=1}^k Q_j^{k+1}\!\left[
    g_k(Z_{[k]})\mu(v_{k+1})
    +\mu(v_j)g_k(Z_{[k]\setminus\{j\}},z_{k+1})
    \right]\\
    &+\sum_{j=1}^k Q_j^{k+1}[g_{k+1}],\qquad k\ge1.
\end{aligned}
\end{equation}
Here $[k]=\{1,\ldots,k\}$, $z_i=(x_i,v_i)$, and $Z_I=(z_i)_{i\in I}$, with indices in increasing order. Symmetry of $g_k$ makes the order of its arguments immaterial. The two background terms in \cref{eqn: g hierarchy} correspond to placing the Maxwellian in either colliding slot. The subset expansion makes this separation exact, as the following calculation shows.

\begin{lemma}\label{lem: g hierarchy derivation}
    Suppose a symmetric sequence $F$ solves the Boltzmann hierarchy \cref{eqn: boltzmann hierarchy}, with sufficient smoothness and velocity decay for the collision terms below to be defined. Define $G=(g_k)_{k\ge1}$ by \cref{eqn: relative cumulant}. Then $G$ satisfies \cref{eqn: g hierarchy}. The identity also holds distributionally whenever the corresponding collision terms are defined, since the calculation at each level involves only finite linear combinations.
\end{lemma}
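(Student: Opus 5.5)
Substitute the inversion formula \cref{eqn: inversion formula} into the left-hand side of \cref{eqn: g hierarchy}, use the hierarchy equation for each $f_p$, and then re-expand the resulting $f_{p+1}$ terms through \cref{eqn: relative cumulant}. Transport is first-order and acts slot by slot. The Maxwellian factors are spatially constant, so $v_i\cdot\nabla_{x_i}$ annihilates every $\mu(v_i)$. By \Cref{lem:Sym-subset-representation}, write
\[
g_k=\sum_{S\subset[k]}(-1)^{k-|S|}f_{|S|}(Z_S)\prod_{i\notin S}\mu(v_i),
\]
which gives
\[
\Bigl(\partial_t+\sum_{j=1}^k v_j\cdot\nabla_{x_j}\Bigr)g_k=\sum_{S\subset[k]}(-1)^{k-|S|}\Bigl(\sum_{j\in S}Q_j^{S\cup\{*\}}[f_{|S|+1}]\Bigr)\prod_{i\notin S}\mu(v_i).
\]
Here the new collision partner $*$ is evaluated at $x_j$. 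The $S=\emptyset$ term vanishes because $f_0=1$ is constant. Since $\mu(v_i)$ for $i\ne j$ does not involve the colliding velocities, each $Q_j$ commutes with multiplication by those factors. Exchanging the order of summation, the right side therefore becomes $\sum_{j=1}^k Q_j^{k+1}[\Phi_j]$, where
\[
\Phi_j:=\sum_{S\ni j}(-1)^{k-|S|}f_{|S|+1}(Z_S,z_{k+1})\prod_{i\in[k]\setminus S}\mu(v_i).
\]

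Next, fix $j$ and expand $\Phi_j$ with the subset form of \cref{eqn: relative cumulant}. Each $f_{|S|+1}(Z_S,z_{k+1})$ is a sum over subsets $T\subset S\cup\{k+1\}$ of $g_{|T|}(Z_T)$ times Maxwellians on the complement. After exchanging sums, the coefficient of $g_{|T|}(Z_T)\prod_{i\in[k+1]\setminus T}\mu(v_i)$ is $\sum_{S:\,T\cap[k]\cup\{j\}\subset S\subset[k]}(-1)^{k-|S|}$. This binomial sum vanishes unless $T\cap[k]\cup\{j\}=[k]$. Two cases survive:
\begin{itemize}
\item $T\supset[k]\setminus\{j\}$ with $j\in T$: this gives $T=[k]$ or $T=[k+1]$, i.e.\ the terms $g_k(Z_{[k]})\mu(v_{k+1})$ and $g_{k+1}$.
\item $T\supset[k]\setminus\{j\}$ with $j\notin T$: this gives $T=[k]\setminus\{j\}$ or $T=([k]\setminus\{j\})\cup\{k+1\}$, i.e.\ the terms $g_{k-1}(Z_{[k]\setminus\{j\}})\mu(v_j)\mu(v_{k+1})$ and $\mu(v_j)g_k(Z_{[k]\setminus\{j\}},z_{k+1})$.
\end{itemize}
In each case the coefficient is $+1$.

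Then remove the $g_{k-1}\,\mu(v_j)\mu(v_{k+1})$ term. The colliding pair $(j,k+1)$ carries the product $\mu(v_j)\mu(v_*)$. Energy conservation gives $\mu(v_j^\sigma)\mu(v_*^\sigma)=\mu(v_j)\mu(v_*)$, so the gain and loss in \cref{eqn: hierarchy collision operator} cancel and $Q_j^{k+1}$ of this term is zero. What remains is exactly \cref{eqn: g hierarchy}, with symmetry of $g_k$ used to reorder arguments. The distributional statement follows because every step is a finite linear combination of collision terms assumed to be defined, together with linear identities. In particular, the Maxwellian factors multiply test functions harmlessly.

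The main obstacle is the bookkeeping in the second step. We must verify that the alternating subset sum really collapses to the four listed $T$, including the edge cases $k=1$ (where $[k]\setminus\{j\}=\emptyset$ and $g_0=1$, so the $\mu\otimes\mu$ term appears and is killed as described) and $|S|=k$. I would check this first on $k=1,2$ against the explicit $g_1$ and $g_2$ before writing the general inclusion--exclusion.
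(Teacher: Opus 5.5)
Your proof is correct, but it organizes the computation differently from the paper.

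The paper never differentiates the alternating formula. It inserts the forward subset expansion $f_k=\sum_{S\subset[k]}g_S\,\mu_{[k]\setminus S}$ into both sides of the density hierarchy. It expands $f_{k+1}$ once, grouping by whether the colliding labels $j$ and $*$ carry centered coefficients. This gives the four terms $g_R\mu(v_j)\mu(v_*)$, $g_{R\cup\{j\}}\mu(v_*)$, $\mu(v_j)g_{R\cup\{*\}}$ and $g_{R\cup\{j,*\}}$, with no cancellation to track. From this it obtains $\sum_{S\subset[k]}(D_Sg_S-\mathcal C_S)\,\mu_{[k]\setminus S}=0$ for every $k$, where $\mathcal C_S$ is the right-hand side of \cref{eqn: g hierarchy} relabeled to $S$. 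It then applies invertibility of the forward decomposition (\Cref{prop: Mobius inversion}) to this residual sequence to conclude $D_{[k]}g_k=\mathcal C_{[k]}$.

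You instead start from the inverse formula, apply the hierarchy to each $f_{|S|}$, and re-expand. You then carry out the M\"obius inversion by hand through the coefficient $\sum_{U\subset S\subset[k]}(-1)^{k-|S|}$ with $U=(T\cap[k])\cup\{j\}$. Your version proves level $k$ in one explicit computation and shows directly why exactly four subsets $T$ survive with coefficient $+1$. The paper's version makes the collision-side bookkeeping trivial, at the cost of an abstract uniqueness step.

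Both rest on the same facts. The Maxwellian factors commute with transport and with $Q_{j,*}$ away from the colliding slots, and $Q[\mu\otimes\mu]=0$ eliminates the $g_{k-1}\,\mu(v_j)\mu(v_*)$ term, which includes the $k=1$ case where $g_0=1$.
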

\begin{proof}[Proof of \Cref{lem: g hierarchy derivation}]
Set $[k]=\{1,\ldots,k\}$ and use $*$ for the collision partner. For a finite set of labels $S$, write
\[
g_S:=g_{|S|}(Z_S),\qquad
\mu_S:=\prod_{i\in S}\mu(v_i),\qquad
D_S:=\partial_t+\sum_{j\in S}v_j\cdot\nabla_{x_j},
\]
with $g_\varnothing=\mu_\varnothing=1$. The subset form of the forward decomposition is
\[
f_k=\sum_{S\subset[k]}g_S\,\mu_{[k]\setminus S}.
\]
Because the Maxwellian is independent of $t$ and $x$, the total transport operator satisfies
\[
D_{[k]}f_k=\sum_{S\subset[k]}(D_Sg_S)\,\mu_{[k]\setminus S}.
\]

Let $Q_{j,*}$ denote \cref{eqn: hierarchy collision operator} with the other particle labels held fixed. Define
\[
\mathcal C_S:=\sum_{j\in S}Q_{j,*}\!\left[
g_S\mu(v_*)+\mu(v_j)g_{(S\setminus\{j\})\cup\{*\}}
+g_{S\cup\{*\}}
\right],\qquad \mathcal C_\varnothing:=0.
\]
We claim that the collision side of the density hierarchy has the same subset decomposition:
\[
\sum_{j=1}^k Q_{j,*}[f_{k+1}]
=\sum_{S\subset[k]}\mathcal C_S\,\mu_{[k]\setminus S}.
\]
For a fixed collision label $j$, group the subset expansion according to the presence of $j$ and $*$:
\[
\begin{aligned}
f_{k+1}=\sum_{R\subset[k]\setminus\{j\}}
\mu_{([k]\setminus\{j\})\setminus R}\Bigl[
&g_R\mu(v_j)\mu(v_*)+g_{R\cup\{j\}}\mu(v_*)\\
&+\mu(v_j)g_{R\cup\{*\}}+g_{R\cup\{j,*\}}\Bigr].
\end{aligned}
\]
The prefactor and $g_R$ are independent of the colliding variables, so the first term vanishes under $Q_{j,*}$ by $Q[\mu\otimes\mu]=0$. Setting $S=R\cup\{j\}$ in the other three terms gives the $j$th contribution to $\mathcal C_S\mu_{[k]\setminus S}$. Summing over $j$ counts every pair $(S,j)$ with $j\in S$ once, proving the claim.

The density hierarchy therefore gives
\[
\sum_{S\subset[k]}(D_Sg_S-\mathcal C_S)\mu_{[k]\setminus S}=0
\qquad\text{for every }k\ge1.
\]
Invertibility of the forward decomposition (\Cref{prop: Mobius inversion}) yields $D_{[k]}g_{[k]}=\mathcal C_{[k]}$, which is \cref{eqn: g hierarchy}.

In particular, at $k=1$ this reads
\[
(\partial_t+v_1\cdot\nabla_{x_1})g_1
=Q_1^2[g_1(z_1)\mu(v_2)]
 +Q_1^2[\mu(v_1)g_1(z_2)]
 +Q_1^2[g_2].
\]
Both background orientations survive; neither is removed by collision symmetry.
\end{proof}

The two background terms have the same structure as the linear terms in the one-particle perturbation equation. To recover the operators $L$ and $\Gamma$ exactly, rescale the centered coefficients by
\begin{equation}\label{eqn: f k hat}
    \widehat{f}_k  = \mu_k^{-1/2} g_k.
\end{equation}
The resulting hierarchy for $\widehat F=(\widehat f_k)_{k\ge1}$ is
\begin{equation}\label{eqn: f hat hieararchy}
    \partial_t \widehat{f}_k + \sum_{j=1}^k v_j \cdot \nabla_{x_j} \widehat{f}_k = \sum_{j=1}^k L_j \bigl[ \widehat{f}_k\bigr] + \sum_{j=1}^k \Gamma^{k+1}_j\bigl[ \widehat{f}_{k+1} \bigr].
\end{equation}
Here $L_j$ is the full operator $L$ acting in $(x_j,v_j)$, while $\Gamma_j^{k+1}$ applies $\Gamma$ to the pair $(j,k+1)$. Thus centering and rescaling preserve the linear hierarchy structure while exposing the one-particle dissipation.

To express this structure compactly, lift \cref{eqn: L decomposition} to each coordinate:
\begin{equation}\label{eqn: L j decomposition}
    L_j := K_j - \Lambda_j, \qquad \forall j =1, 2, \dots,
\end{equation}
where $\nu$ is defined in \cref{eqn: nu def}, $\Lambda_j[h](v_j):=\nu(v_j)h(v_j)$ is multiplication by the collision frequency on the $j$th component, and $K_j$ applies the velocity operator $K$ in the $j$th slot. The corresponding transport-collision operator is
\begin{equation}\label{eqn: Lambda j decomposition}
    A_j := L_j - v_j \cdot \nabla_{x_j}, \qquad \forall j = 1,2, \dots.
\end{equation}

Summing the one-particle generators and pair couplings, set
\begin{equation}\label{eqn: k operator def}
    A^k  = \sum_{j=1}^k A_j, \qquad K^k = \sum_{j=1}^k K_j, \qquad  \Gamma^{k+1} = \sum_{j=1}^k \Gamma_{j}^{k+1} .
\end{equation}
The superscripts label hierarchy levels, not powers. With the summed operators, \cref{eqn: f hat hieararchy} becomes
\begin{equation}\label{eqn: reduced f hat hierachy}
    \partial_t \widehat{f}_k = A^k \left[ \widehat{f}_k \right] + \Gamma^{k+1} \left[ \widehat{f}_{k+1}\right].
\end{equation}

\section{Solution class and precise main results}\label{sec: main theorems}
Write $h_k:=\widehat f_k=\mu_k^{-1/2}g_k$ and $H=(h_k)_{k\ge1}$. To use one-particle decay in every slot, we must remove the conserved modes and choose norms compatible with both transport and collisions. We first translate the statistical moment conditions into projection constraints, then incorporate them into the energy spaces and solution class.

\subsection{Admissibility and invariant projections}\label{subsec:admissibility}
To describe a single statistical state, the densities at different levels must be consistent: integrating out one particle must recover the preceding density. This is the admissibility condition used below.
\begin{definition}\label{def: Pbb}
    The space of \emph{admissible density hierarchies} is
\begin{equation}\label{eqn: P def}
    \Pbb := \left\{ \left(f_k\right)_{k=1}^\infty \in \prod_{k=1}^\infty \Pbb_k \Bigg\vert \, \, \int_{\Tbb^3\times \Rbb^3}f_{k+1}(X_k, x_{k+1} , V_k, v_{k+1}) dx_{k+1} dv_{k+1} = f_k(X_k, V_k), \qquad \forall k \geq 1\right\}.
\end{equation}
\end{definition}

At each fixed time, the Hewitt--Savage theorem \cite{HewittSavage55} represents an admissible hierarchy as a mixture of product probability measures on $\Tbb^3\times\Rbb^3$. This representation alone gives neither regular directing densities nor a mixing law transported by a regular Boltzmann flow. Our energy estimates do not use the representation and also apply to signed centered hierarchies and differences of solutions.

Consistency and first-marginal moments alone do not select the Maxwellian equilibrium. For $M_\vartheta(v):=(2\pi\vartheta)^{-3/2}e^{-|v|^2/(2\vartheta)}$ and $0<\delta<1$, the stationary hierarchy $f_k=\tfrac12(M_{1-\delta}^{\otimes k}+M_{1+\delta}^{\otimes k})$ has first-marginal mass $1$, momentum $0$, and second velocity moment $3$, but differs from $\mu^{\otimes k}$ and already violates the conditional energy identity at $k=2$. We therefore require the invariant moments in each particle coordinate, conditional on the remaining coordinates, to agree with those of $\mu$.
\begin{definition}\label{def: macro admissible}
Let $\phi(v)$ denote any collision invariant of the Boltzmann operator,
that is,
\[
\phi(v) \in \mathrm{span}\{1,v^1,v^2,v^3,|v|^2\}.
\]

A sequence $(f_k)_{k \ge 1} \in \Pbb$ is said to be
\emph{macroscopically admissible} if for every $k \ge 1$,
every particle index $j \in \{1,\dots,k\}$, and every collision
invariant $\phi$, one has
\begin{equation}
\label{eq:Jphi-admissibility}
\int_{\Tbb^3 \times \Rbb^3}
\phi(v_j)\, f_k(X_k,V_k)\, dx_j dv_j
=
\left(\int_{\Rbb^3} \phi(v)\mu(v)\, dv\right)
f_{k-1}(\widehat{X}_k^j, \widehat{V}_k^j),
\end{equation}
where $(\widehat{X}_k^j,\widehat{V}_k^j)$ denotes the $(k-1)$-particle configuration obtained by removing the $j$th variable, and $f_0:=1$ when $k=1$. The identity is understood almost everywhere in the remaining variables. We denote this set of sequences by $\Pbb_{macro}$. The macroscopic admissibility condition prescribes moments integrated over the full phase-space coordinate $(x_j,v_j)$, conditional on the remaining coordinates; it is not a pointwise-in-$x_j$ moment condition.
\end{definition}

We now translate these moment conditions into invariant projections for the perturbation variables.
\begin{definition}\label{def: pde null space}
For $L$ defined in \cref{eqn: L def}, let $\mathcal N$ denote its null space in velocity:
\begin{equation}\label{eqn: pde null space}
\mathcal N:=\ker_{L^2(\Rbb^3_v)}L
=\operatorname{span}\{\varphi_i\}_{i=1}^5,
\qquad
\begin{aligned}
\varphi_1(v)&=\mu^{1/2}(v),\\
\varphi_{a+1}(v)&=v^a\mu^{1/2}(v),\quad a=1,2,3,\\
\varphi_5(v)&=\frac{|v|^2-3}{\sqrt6}\mu^{1/2}(v).
\end{aligned}
\end{equation}
Here $v^a$ is the $a$th Cartesian component of $v$. These functions are orthonormal in $L^2_v$; in particular, $\int_{\Rbb^3}(|v|^2-3)^2\mu(v)\,dv=6$.

The velocity projection acts at each position:
\begin{equation}\label{eqn: velocity projection}
(\Pi_v h)(x,v)
:=\sum_{i=1}^5\left(\int_{\Rbb^3}\varphi_i(w)h(x,w)\,dw\right)\varphi_i(v).
\end{equation}
As an operator on $L^2_{x,v}$, $L$ has null space
\[
\left\{\sum_{i=1}^5a_i(x)\varphi_i(v):a_i\in L^2(\Tbb^3)\right\},
\]
the range of $\Pi_v$. By contrast, $A=L-v\cdot\nabla_x$ has the spatially constant null modes
\[
\ker A=\left\{\sum_{i=1}^5c_i\varphi_i(v):c_i\in\Rbb\right\}.
\]
The global $L^2_{x,v}$-orthogonal projection onto $\ker A$ is
\begin{equation}\label{eqn: global projection}
(\Pi_g h)(x,v)
:=\sum_{i=1}^5\left(\int_{\Tbb^3\times\Rbb^3}
\varphi_i(w)h(y,w)\,dy\,dw\right)\varphi_i(v)
=\int_{\Tbb^3}(\Pi_v h)(y,v)\,dy.
\end{equation}
Its output is independent of $x$. The normalized spatial measure makes the same basis orthonormal in $L^2_{x,v}$.
\end{definition}

The distinction is essential: $\Pi_g h=0$ requires only the five global invariant moments to vanish, whereas $\Pi_v h=0$ requires their vanishing at each position. For example, $h(x,v)=\sin(x^1)\mu^{1/2}(v)$ satisfies $\Pi_g h=0$ but $\Pi_v h=h$.

At the hierarchy level, apply these projections in each particle coordinate while holding the others fixed.
\begin{definition}\label{def: hierarchy null space}
Fix $k\ge1$ and $j\in\{1,\ldots,k\}$. The projection $\Pi_v^{(j)}$ applies \cref{eqn: velocity projection} in the $j$th velocity variable, holding every position and the other velocities fixed:
\begin{equation}\label{eqn: hierarchy velocity projection j}
(\Pi_v^{(j)}h_k)(X_k,V_k)
:=\sum_{i=1}^5\left(\int_{\Rbb^3}\varphi_i(w)
h_k(X_k,V_k^{j\to w})\,dw\right)\varphi_i(v_j).
\end{equation}
The range of $\Pi_v^{(j)}$ is $\ker L_j$; the invariant-mode coefficients may depend on all positions and the other velocities.

The projection $\Pi_g^{(j)}$ applies \cref{eqn: global projection} in the full $j$th phase-space coordinate:
\begin{equation}\label{eqn: hierarchy null projection j}
(\Pi_g^{(j)}h_k)(X_k,V_k)
:=\sum_{i=1}^5\left(\int_{\Tbb^3\times\Rbb^3}\varphi_i(w)
h_k(X_k^{j\to y},V_k^{j\to w})\,dy\,dw\right)\varphi_i(v_j).
\end{equation}
Here $X_k^{j\to y}$ replaces $x_j$ by $y$, and $V_k^{j\to w}$ replaces $v_j$ by $w$. The range of $\Pi_g^{(j)}$ is $\ker A_j$, where $A_j=L_j-v_j\cdot\nabla_{x_j}$. The coefficients may depend on the remaining particle coordinates, but are independent of $x_j$ and $v_j$. Both $\Pi_v^{(j)}$ and $\Pi_g^{(j)}$ are orthogonal projections in the full $L^2$ space.
\end{definition}

\subsection{One-particle spaces and the adapted norm}
The collision frequency supplies the velocity weight in the dissipation norm. For a positive velocity weight $w$, write $\|u\|_{L^2(w)}^2:=\int w(v)|u(x,v)|^2\,dx\,dv$; the subscript $v$ indicates velocity integration only.
For multiindices $\alpha,\beta\in\Nbb_0^3$, write $|\alpha|=\alpha_1+\alpha_2+\alpha_3$ and let $e_i$ be the $i$th coordinate vector. For an integer $s\ge0$, set
\begin{equation}\label{eqn: H s def}
\|u\|_{H^s}^2:=\sum_{|\alpha|+|\beta|\le s}\|\partial_x^\alpha\partial_v^\beta u\|_{L^2}^2,
\end{equation}
and define
\begin{equation}\label{eqn: Hs Lambda 1}
\|u\|_{H^s(\nu)}^2:=\sum_{|\alpha|+|\beta|\le s}\int\nu(v)|\partial_x^\alpha\partial_v^\beta u|^2\,dx\,dv.
\end{equation}
The adapted norm couples spatial and velocity derivatives so that collision dissipation also controls relaxation under transport. Fix positive constants $a_0,a_x,a_v,a_{xv},a_2,b_2,c_2,K_2$ by the Mouhot--Neumann construction \cite[Theorem~3.1]{MN06}, chosen so that $a_{xv}^2<a_xa_v$, $c_2^2<a_2b_2$, $K_2a_0\ge1$, and the coercivity conclusion of \Cref{prop: Lambda K L hypothesis} holds. With these coefficients fixed, set
\begin{equation}\label{eqn: Hcal 1}
\Phi_1(u):=a_0\|u\|_2^2+a_x\|\nabla_xu\|_2^2+a_v\|\nabla_vu\|_2^2
+2a_{xv}\langle\nabla_xu,\nabla_vu\rangle_2.
\end{equation}
For $|\alpha|=2$ and $\alpha_i>0$, put
\[
\Phi_{2,\alpha i}(u):=a_2\|\partial_x^\alpha u\|_2^2
+b_2\|\partial_x^{\alpha-e_i}\partial_{v_i}u\|_2^2
+2c_2\langle\partial_x^{\alpha-e_i}\partial_{v_i}u,\partial_x^\alpha u\rangle_2,
\]
so that the second-order norm is
\begin{equation}\label{eqn: Hcal 2}
\|u\|_{\mathscr H}^2
:=K_2 \Phi_1(u)+\sum_{|\beta|=2}\|\partial_v^\beta u\|_2^2
+\sum_{|\alpha|=2}\sum_{i:\alpha_i>0}\Phi_{2,\alpha i}(u).
\end{equation}
The normalization gives $\|u\|_{L^2}\le\|u\|_{\mathscr H}$. Real polarization of this quadratic form defines the inner product. For a multiindex pair $a=(\alpha,\beta)$, write $\partial^a:=\partial_x^\alpha\partial_v^\beta$. Polarizing and expanding \cref{eqn: Hcal 2} gives fixed real coefficients $m_{ab}=m_{ba}$, independent of particle number, such that
\begin{equation}\label{eq:derivative-form}
\langle u,w\rangle_{\mathscr H}
=\sum_{a,b\in I_2}m_{ab}\langle\partial^a u,\partial^b w\rangle_{L^2},
\qquad I_2:=\{(\alpha,\beta):|\alpha|+|\beta|\le2\}.
\end{equation}
Restricting to the complement of the global invariant modes gives the one-particle energy and dissipation spaces
\begin{equation}\label{eq:one-particle-spaces}
\Hcal:=(\ker\Pi_g\cap H^2,\|\cdot\|_{\mathscr H}),\qquad
\Dcal:=\ker\Pi_g\cap H^2(\nu),\qquad \|u\|_{\Dcal}:=\|u\|_{H^2(\nu)}.
\end{equation}
The ambient spaces without the projection constraint are denoted by $\mathscr H$ and $\mathscr D=H^2(\nu)$. For real Hilbert spaces $U_1,\ldots,U_k$, the Hilbert tensor product $U_1\otimes\cdots\otimes U_k$ is the completion of the algebraic tensor product for the inner product
\[
\bigl\langle u_1\otimes\cdots\otimes u_k,\,
w_1\otimes\cdots\otimes w_k\bigr\rangle
:=\prod_{j=1}^k\langle u_j,w_j\rangle_{U_j},
\]
extended bilinearly to finite sums of elementary tensors. For function spaces, $(u\otimes v)(z_1,z_2)=u(z_1)v(z_2)$. Thus $\otimes$ denotes the completed Hilbert tensor product when applied to spaces and the elementary product when applied to functions. The completed spaces also contain correlated functions.

\subsection{Tensor spaces and geometric activity norms}\label{subsec: hierarchy spaces}
Because $A^k$ is a sum of one-particle generators, the coercivity estimate leads us to sum the squared one-slot dissipation norms. We retain the adapted inner product in every other slot when forming the symmetric tensor spaces.
\begin{definition}\label{def: Mouhot Neumann Villani hierarchy norm}
For $k\ge1$ define
\begin{equation}\label{eqn: Hcal 2 k}
\Hcal_k:=\Sym(\Hcal^{\otimes k}),\qquad
D_{k,j}(u):=\|u\|_{\Hcal^{\otimes(j-1)}\otimes\Dcal\otimes\Hcal^{\otimes(k-j)}},
\end{equation}
and equip the symmetric intersection of these $k$ spaces with norm
\begin{equation}\label{eqn: Hs Lambda k}
\|u\|_{\Dcal_k}^2:=\sum_{j=1}^kD_{k,j}(u)^2.
\end{equation}
The tensor inner product is denoted by $\langle\cdot,\cdot\rangle_{\Hcal_k}$.
\end{definition}

We write $\Dcal_k^*$ for the space of continuous linear functionals
on $\Dcal_k$, with norm
\[
\|\ell\|_{\Dcal_k^*}
:=\sup_{0\ne v\in\Dcal_k}
\frac{|\ell(v)|}{\|v\|_{\Dcal_k}}.
\]
The energy space is the pivot: a function $g\in\Hcal_k$ acts by
$v\mapsto\langle g,v\rangle_{\Hcal_k}$, giving
$\Dcal_k\subset\Hcal_k\subset\Dcal_k^*$.
For collision outputs and smooth-core residuals, the dual action is
the continuous extension of this adapted energy pairing. This is the
duality used in the energy estimates; the hierarchy equation is
separately required in the usual distributional sense.
The one-particle notation $\Dcal^*$ has the same meaning.

These tensor spaces control derivatives separately in each particle coordinate. The next comparison identifies this regularity and records its dependence on $k$.

\begin{lemma}\label{lem:H2-vs-Hcal2}
There is $C\ge1$, independent of $k$, such that
\begin{equation}\label{eq:H2-Hcal2-equivalence}
C^{-k}\|u\|_{\mathcal S_k}\le\|u\|_{\Hcal_k}\le C^k\|u\|_{\mathcal S_k},\qquad
\mathcal S_k:=(H^2)^{\otimes k},\quad u\in\Hcal_k.
\end{equation}
The Sobolev tensor space $\mathcal S_k$ has squared norm $\|u\|_{\mathcal S_k}^2$ equal to the sum of the squared $L^2$ norms of all derivatives
$\prod_{j=1}^k\partial_{x_j}^{\alpha_j}\partial_{v_j}^{\beta_j}$ with $|\alpha_j|+|\beta_j|\le2$ for every $j$. In particular, it includes products of derivatives across particle coordinates and is not the Sobolev norm of total order two in $6k$ variables.
\end{lemma}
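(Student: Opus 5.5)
The plan is to reduce the comparison to a one-particle norm equivalence and then tensorize it, noting that the constant $C^k$ comes from multiplying $k$ one-slot constants. First we show that the adapted norm $\|\cdot\|_{\mathscr H}$ of \cref{eqn: Hcal 2} is equivalent to the standard $H^2$ norm of \cref{eqn: H s def} on all of $H^2(\Tbb^3\times\Rbb^3)$, with constants $c_1\le 1\le c_2$ that are fixed once the Mouhot--Neumann coefficients are fixed.

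\emph{Upper bound.} The upper bound $\|u\|_{\mathscr H}^2\le c_2^2\|u\|_{H^2}^2$ follows from Cauchy--Schwarz applied to the cross terms $2a_{xv}\langle\nabla_xu,\nabla_vu\rangle$ and $2c_2\langle\partial_x^{\alpha-e_i}\partial_{v_i}u,\partial_x^\alpha u\rangle$.

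\emph{Lower bound.} This uses the positivity conditions $a_{xv}^2<a_xa_v$ and $c_2^2<a_2b_2$. With them, each $2\times2$ quadratic block
\[
a_x|X|^2+a_v|Y|^2+2a_{xv}X\cdot Y
\]
is bounded below by $\epsilon(|X|^2+|Y|^2)$. The same holds for the $\Phi_{2,\alpha i}$ blocks. Together with $K_2a_0\|u\|^2$ and the pure $\partial_v^\beta$ terms, this controls every derivative $\partial_x^\alpha\partial_v^\beta u$ with $|\alpha|+|\beta|\le2$. The term $\partial_x^\alpha u$ with $|\alpha|=2$ appears through $a_2$, and the mixed second derivatives $\partial_{x}\partial_{v}$ appear through $b_2$ as $\alpha$ and $i$ range, so nothing is missed. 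Every second-order mixed derivative $\partial_{x_l}\partial_{v_i}$ arises as $\partial_x^{\alpha-e_i}\partial_{v_i}$ with $\alpha=e_l+e_i$, which gives $\|u\|_{H^2}\le c_1^{-1}\|u\|_{\mathscr H}$. Set $C:=\max(c_2,c_1^{-1})\ge1$.

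\emph{Tensorization.} The key observation is that $\mathcal S_k=(H^2)^{\otimes k}$ carries exactly the tensor-product Hilbert norm of the $k$ copies of $(H^2,\|\cdot\|_{H^2})$. Indeed, for elementary tensors $\prod_j\|u_j\|_{H^2}^2$ expands into the sum of squared $L^2$ norms of the products $\prod_j\partial^{a_j}_{(j)}$, and bilinearity extends this to the completion. On each factor we have two equivalent inner products, related by $\langle u,u\rangle_{\mathscr H}=\langle Tu,u\rangle_{H^2}$ for a positive bounded operator $T$ with $C^{-2}\le T\le C^2$. Then $T^{\otimes k}$ relates the two tensor norms. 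Since $T^{\otimes k}=\prod_j(I\otimes\cdots\otimes T\otimes\cdots\otimes I)$ is a product of commuting positive operators, each between $C^{-2}$ and $C^2$, we get $C^{-2k}\le T^{\otimes k}\le C^{2k}$. This gives \cref{eq:H2-Hcal2-equivalence} on $(\mathscr H)^{\otimes k}$ versus $(H^2)^{\otimes k}$.

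\emph{Restriction to $\Hcal_k$.} We then restrict to $\Hcal_k=\Sym(\Hcal^{\otimes k})$. The norm on $\Hcal$ is the restriction of $\|\cdot\|_{\mathscr H}$ to the closed subspace $\ker\Pi_g\cap H^2$, so $\Hcal^{\otimes k}$ is a closed subspace of $\mathscr H^{\otimes k}$ carrying the restricted norm. The symmetric subspace is again a closed subspace, so the inequality restricts directly.

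\emph{Main obstacle.} The main subtlety is identifying the abstract completed tensor product with a concrete function space. One must check that the two completions $\mathscr H^{\otimes k}$ and $(H^2)^{\otimes k}$ coincide as sets of functions on $(\Tbb^3\times\Rbb^3)^k$. This follows from the norm equivalence on the dense algebraic tensor product, since Cauchy sequences agree. One must also check that the limits are genuine $L^2$ functions, which holds because both norms dominate the $L^2$ tensor norm, and $L^2\otimes\cdots\otimes L^2=L^2$ of the product space. The operator-ordering claim $C^{-2k}\le T^{\otimes k}\le C^{2k}$ should be verified first on finite sums of elementary tensors, where it is elementary, and then extended by density.
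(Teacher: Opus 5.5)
Your proposal is correct and follows essentially the paper's route: a one-particle equivalence $C^{-1}\|v\|_{H^2}\le\|v\|_{\mathscr H}\le C\|v\|_{H^2}$ is tensorized to gain a factor $C^{\pm2}$ per slot for squared norms, then restricted to the closed symmetric, globally constrained subspace $\Hcal_k$ and extended by completion. The only differences are that you verify the one-particle equivalence explicitly from $a_{xv}^2<a_xa_v$ and $c_2^2<a_2b_2$, whereas the paper simply fixes such a $C$, and that your factorization $T^{\otimes k}=\prod_j T_j$ into commuting slot operators is the operator form of the paper's slot-by-slot Parseval replacement.
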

\begin{proof}
Let $S=H^2(\Tbb^3\times\Rbb^3)$ and choose $C\ge1$ so that
\[
C^{-1}\|v\|_S\le\|v\|_{\mathscr H}\le C\|v\|_S.
\]
First take a finite sum of smooth separated functions
$u=\sum_{\ell=1}^N\prod_{j=1}^k u_{\ell,j}(x_j,v_j)$.
Expanding its squared tensor norm and applying Fubini gives
\[
\|u\|_{S^{\otimes k}}^2
=\sum_{\substack{a_1,\ldots,a_k\in I_2}}
  \|\partial_1^{a_1}\cdots\partial_k^{a_k}u\|_{L^2}^2,
\qquad
\partial_j^{(\alpha,\beta)}=\partial_{x_j}^{\alpha}
                            \partial_{v_j}^{\beta}.
\]
For comparison in coordinate $j$, expand the remaining factors in an orthonormal basis of their current tensor space, writing $u=\sum_n u_n\otimes e_n$. Parseval and the one-particle bound give
\[
C^{-2}\sum_n\|u_n\|_S^2
\le\sum_n\|u_n\|_{\mathscr H}^2
\le C^2\sum_n\|u_n\|_S^2.
\]
Replacing all $k$ norms gives factors $C^{\pm2k}$ for squared norms, proving \cref{eq:H2-Hcal2-equivalence}. Restriction to globally constrained tensors and symmetrization preserve the comparison. Completion extends it and the derivative identity to $\Hcal_k$, with limiting derivatives understood distributionally.
\end{proof}

To sum the energy inequalities over the hierarchy, we must also control growth across particle levels. Geometric activity weights do this while keeping the ratio of adjacent weights compatible with the coupling from level $k+1$ to level $k$.
\begin{definition}\label{def: activity weighted norms}
For $z>0$ and $H=(h_k)_{k\ge1}$ with $h_k\in\Hcal_k$, set
\begin{equation}\label{eq:Ez-def}
\Ecal_z(H)^2:=\sum_{k\ge1}z^{-2k}\|h_k\|_{\Hcal_k}^2,
\qquad \Xbb_z:=\{H:\Ecal_z(H)<\infty\},
\end{equation}
\begin{equation}\label{eq:Dz-def}
\Dcal_z(H)^2:=\sum_{k\ge1}z^{-2k}\|h_k\|_{\Dcal_k}^2,
\qquad \widetilde\Xbb_z:=\{H:\Dcal_z(H)<\infty\}.
\end{equation}
\end{definition}
The absence of a factorial in \cref{eq:Ez-def,eq:Dz-def} keeps adjacent weights compatible with the collision coupling. Factorials in symmetrization and generating functions retain their combinatorial meaning.

\subsection{The energy solution class}\label{subsec:solution-class}
We define energy solutions by smooth approximation of each forced level equation: both the solution and its forcing converge in the norms needed to pass the energy inequality to the limit. This is analogous to Friedrichs' strong-solution construction \cite[Section~4]{Friedrichs1958} and the kinetic graph-space approximation in \cite[Section~6]{AAMN2024}, with the energy and dissipation spaces chosen here for the Boltzmann hierarchy.
\begin{definition}\label{def:Xz-solution}
An energy solution in $\Xbb_z$ on $[0,T]$, or $\Xbb_z$-solution, is a sequence $H=(h_k)$ satisfying the centered hierarchy \cref{eqn: reduced f hat hierachy} distributionally and
\[
H\in L^\infty(0,T;\Xbb_z)\cap L^2(0,T;\widetilde\Xbb_z),\qquad
h_k\in C([0,T];\Hcal_k),\qquad H(0)\in\Xbb_z.
\]
At each level $k$, write $F_k:=\Gamma^{k+1}h_{k+1}\in L^2(0,T;\Dcal_k^*)$. There are curves $u_{k,n}\in C^1([0,T];\Hcal_k)$ taking values in the symmetric, globally constrained smooth core of $A^k$, and approximating forcings $F_{k,n}\in L^2(0,T;\Dcal_k^*)$, such that
\begin{equation}\label{eqn: Xz solution}
\begin{split}
\partial_tu_{k,n}-A^ku_{k,n}&=F_{k,n},\\
u_{k,n}&\longrightarrow h_k\quad\hbox{in }C([0,T];\Hcal_k)\cap L^2(0,T;\Dcal_k),\\
F_{k,n}&\longrightarrow F_k=\Gamma^{k+1}h_{k+1}\quad\hbox{in }L^2(0,T;\Dcal_k^*).
\end{split}
\end{equation}
We take the smooth core to consist of finite sums of symmetrized tensors of smooth periodic functions rapidly decreasing in velocity, with $1-\Pi_g$ applied in every slot. The approximations are made separately at each level; $F_{k,n}$ need not equal $\Gamma^{k+1}u_{k+1,n}$. On $[0,\infty)$ the requirements hold on every finite interval.
\end{definition}

The approximation property transfers the energy inequality to solutions and their differences. General existence remains an assumption; \Cref{prop:mixture-existence} supplies statistical-mixture examples.

We can now verify the link between the density constraints and the spaces just defined. In the centered inversion, macroscopic admissibility makes terms with and without a given particle cancel, removing its global invariant modes.

\begin{lemma}\label{lem: marginal of relative cumulants}
    Let $F(t)$ be macroscopically admissible for almost every $t$. Define $g_k(t)$ by \cref{eqn: inversion formula} and $\widehat f_k(t)=\mu_k^{-1/2}g_k(t)$, and suppose $\widehat f_k(t)\in L^2((\Tbb^3\times\Rbb^3)^k)$ for each $k$ at these times. Then
    \begin{equation}
        \Pi_g^{(j)}\widehat f_k(t)=0,\qquad k\ge1,\quad 1\le j\le k,
        \quad\text{for almost every }t.
    \end{equation}
\end{lemma}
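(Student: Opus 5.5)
The plan is to reduce the projection identity to a vanishing statement for conditional collision-invariant moments of $g_k$. That vanishing then follows from an inclusion--exclusion cancellation in the subset form of the inversion formula \cref{eqn: inversion formula}.

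\emph{Reduction.} Fix $t$ outside the exceptional null set, and fix $k$, $j$ and $i\in\{1,\ldots,5\}$. By \cref{eqn: hierarchy null projection j} it suffices to show that the coefficient
\[
c_i(\widehat Z^j_k):=\int_{\Tbb^3\times\Rbb^3}\varphi_i(v_j)\,\widehat f_k(t)\,dx_j\,dv_j
\]
vanishes for almost every value of the remaining variables. Since $\widehat f_k(t)\in L^2$ and $\varphi_i\in L^2_v$, Fubini and Cauchy--Schwarz show that this integral exists for almost every value of the remaining coordinates. Write $\varphi_i=\psi_i\mu^{1/2}$, where $\psi_i\in\operatorname{span}\{1,v^1,v^2,v^3,|v|^2\}$ is a collision invariant. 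Then $\widehat f_k=\mu_k^{-1/2}g_k$ gives
\[
c_i=\mu_{[k]\setminus\{j\}}^{-1/2}\int\psi_i(v_j)\,g_k\,dz_j .
\]
So it is enough to prove $\int\psi(v_j)g_k\,dz_j=0$ for every collision invariant $\psi$.

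\emph{Subset form and pairing.} Combining \Cref{lem:Sym-subset-representation} with \cref{eqn: inversion formula}, the binomial factors cancel and we get
\[
g_k=\sum_{S\subset[k]}(-1)^{k-|S|}f_S\,\mu_{[k]\setminus S},
\]
with $f_S:=f_{|S|}(Z_S)$ and $f_\varnothing=1$. We split the subsets according to whether they contain $j$, writing $S=R$ or $S=R\cup\{j\}$ with $R\subset[k]\setminus\{j\}$. Then we integrate against $\psi(v_j)$ in $z_j$.

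\begin{itemize}
\item For $S=R\cup\{j\}$, macroscopic admissibility \cref{eq:Jphi-admissibility}, applied to $f_{|R|+1}$ in the slot $j$, gives $\int\psi(v_j)f_{R\cup\{j\}}\,dz_j=c_\psi f_R$, where $c_\psi:=\int\psi\mu\,dv$. For $R=\varnothing$ this uses $f_0=1$.
\item For $S=R$, the factor $\mu(v_j)$ is present, and the normalized spatial measure gives $\int\psi(v_j)\mu(v_j)\,dz_j=c_\psi$.
\end{itemize}

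The two contributions carry the same factor $c_\psi f_R\,\mu_{([k]\setminus\{j\})\setminus R}$, with signs $(-1)^{k-|R|-1}$ and $(-1)^{k-|R|}$. They cancel for every $R$, and summing over $R$ gives $\int\psi(v_j)g_k\,dz_j=0$. As a sanity check, for $k=1$ this reduces to $\int\psi(f_1-\mu)\,dz=c_\psi-c_\psi=0$.

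\emph{Main obstacle.} The delicate point is justifying the term-by-term integration, since the individual terms $\psi f_S\mu_{\cdots}$ are not a priori in $L^2$ after rescaling. I would handle this with two observations.

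\begin{itemize}
\item Integrability of $\psi(v_j)f_{|S|}$ in $z_j$, for almost every value of the remaining variables, is implicit in the admissibility identity \cref{eq:Jphi-admissibility}. Hence each of the finitely many terms is separately integrable in $z_j$, and linearity applies.
\item The combined integral agrees with the $L^2$ pairing defining $c_i$ almost everywhere. Both are computed from the same a.e.-defined integrand $\psi(v_j)g_k=\varphi_i(v_j)\,\mu_{[k]\setminus\{j\}}^{1/2}\widehat f_k$, and that integrand is integrable by the Cauchy--Schwarz step above.
\end{itemize}

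Finally, taking the union over countably many $k,j,i$ of the exceptional null sets in time yields the statement for almost every $t$.
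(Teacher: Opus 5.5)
Your proposal is correct and follows the paper's proof essentially step for step. Both arguments write the inversion formula in subset form, pair each $R\subset[k]\setminus\{j\}$ with $R\cup\{j\}$, cancel the two terms using macroscopic admissibility, and convert back to the coefficients of $\Pi_g^{(j)}$ through $\varphi_i=\psi_i\mu^{1/2}$; your term-by-term integrability justification fills in a point the paper leaves implicit, and it also follows from $f_{|S|}\ge0$ together with $|\psi(v)|\lesssim 1+|v|^2$.
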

\begin{proof}[Proof of \Cref{lem: marginal of relative cumulants}]
Fix a time $t$ satisfying the hypotheses, a level $k\ge1$, and $j\in[k]$. Let $\phi$ be any element of $\operatorname{span}\{1,v^1,v^2,v^3,|v|^2\}$, and set
\[
c_\phi:=\int_{\Rbb^3}\phi(v)\mu(v)\,dv
=\int_{\Tbb^3\times\Rbb^3}\phi(v)\mu(v)\,dx\,dv.
\]
The equality uses the normalized spatial measure. The subset form of \cref{eqn: inversion formula} is
\[
g_k(t,Z_{[k]})
=\sum_{S\subset[k]}(-1)^{k-|S|}
f_{|S|}(t,Z_S)\prod_{i\in[k]\setminus S}\mu(v_i).
\]
Pair each subset $R\subset[k]\setminus\{j\}$ with $R\cup\{j\}$. For $S=R$, integration against $\phi(v_j)\,dx_j\,dv_j$ produces
\[
(-1)^{k-|R|}c_\phi\,f_{|R|}(t,Z_R)
\prod_{i\in[k]\setminus(R\cup\{j\})}\mu(v_i).
\]
For $S=R\cup\{j\}$, macroscopic admissibility gives
\begin{equation}\label{eq:Jphi-admissibility invoked}
\int_{\Tbb^3\times\Rbb^3}
\phi(v_j)f_{|R|+1}(t,Z_R,z_j)\,dx_j\,dv_j
=c_\phi f_{|R|}(t,Z_R),
\end{equation}
including $R=\varnothing$ by the convention $f_0=1$. The resulting term has the opposite sign $(-1)^{k-|R|-1}$. Thus each pair cancels and
\[
\int_{\Tbb^3\times\Rbb^3}\phi(v_j)g_k(t,X_k,V_k)\,dx_j\,dv_j=0.
\]
Since $\widehat f_k=\mu_k^{-1/2}g_k$, it follows that
\[
\int_{\Tbb^3\times\Rbb^3}
\phi(v_j)\mu^{1/2}(v_j)\widehat f_k(t,X_k,V_k)\,dx_j\,dv_j=0.
\]
Each $\varphi_i/\mu^{1/2}$ is a collision invariant, including $(|v|^2-3)/\sqrt6$. Hence every coefficient in \cref{eqn: hierarchy null projection j} vanishes, proving $\Pi_g^{(j)}\widehat f_k=0$.
\end{proof}

\subsection{Contraction, decay, and statistical solutions}\label{subsec:main-statements}
Fix the adapted norm above with the coefficients supplied by \Cref{prop: Lambda K L hypothesis}, and let $\lambda,c_0>0$ be its coercivity and dissipation-comparison constants in \cref{eqn: A microscopic coersive}. Let $C_\Gamma$ be the pair-collision constant of \Cref{prop: Gamma energy pairing}; the hierarchy bound in \Cref{prop: Gamma energy pairing k} has coefficient $\sqrt{2}C_\Gamma$. The proofs of these operator estimates appear in \Cref{sec:operator-estimates}; all three constants are independent of the particle number.

For small activity, dissipation absorbs the collision coupling, giving contraction in the fixed energy space.

\begin{theorem}\label{thm: main thm}
Under \Cref{A: B regularity}, set
\begin{equation}\label{eq:z-small-in-main-thm}
z_\ast:=\frac{\lambda}{\sqrt{2}C_\Gamma},\qquad 0<z<z_\ast,\qquad
\eta_z:=c_0(\lambda-\sqrt{2}C_\Gamma z)>0.
\end{equation}
Let $H(0)\in\Xbb_z$. There is at most one energy solution ($\Xbb_z$-solution in the sense of \Cref{def:Xz-solution}) of the centered hierarchy \cref{eqn: reduced f hat hierachy} with these initial data. Every such solution on $[0,T]$ satisfies
\begin{equation}\label{eq:exp-decay-main}
\Ecal_z(H(t))\le e^{-\eta_z t}\Ecal_z(H(0)),\qquad 0\le t\le T.
\end{equation}
\end{theorem}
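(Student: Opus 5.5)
The plan is to derive the integrated energy--dissipation inequality stated in the introduction and then integrate it. Everything rests on the three operator estimates cited above.

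\emph{Level-$k$ energy identity.} Fix a level $k$ and take the approximants $u_{k,n}$, $F_{k,n}$ from \Cref{def:Xz-solution}. Since $u_{k,n}$ is $C^1$ with values in the smooth core, we may pair the equation with $u_{k,n}$ in the adapted inner product $\langle\cdot,\cdot\rangle_{\Hcal_k}$:
\[
\tfrac12\|u_{k,n}(t)\|_{\Hcal_k}^2-\tfrac12\|u_{k,n}(0)\|_{\Hcal_k}^2
=\int_0^t\Big(\langle A^ku_{k,n},u_{k,n}\rangle_{\Hcal_k}+F_{k,n}(u_{k,n})\Big)\,ds.
\]
Because $A^k=\sum_jA_j$, the coercivity conclusion of \Cref{prop: Lambda K L hypothesis} applies in slot $j$ with the other slots frozen. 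Using an orthonormal expansion of the other factors, as in the proof of \Cref{lem:H2-vs-Hcal2}, together with $\Pi_g^{(j)}u=0$, this gives $\langle A^ku,u\rangle_{\Hcal_k}\le-\lambda\|u\|_{\Dcal_k}^2$. The convergences in \eqref{eqn: Xz solution} then let us pass to the limit $n\to\infty$. The dissipation term converges in $L^2(0,T;\Dcal_k)$, and the forcing term converges through the $\Dcal_k^*$--$\Dcal_k$ duality. The result is
\[
\tfrac12\|h_k(t)\|_{\Hcal_k}^2+\lambda\int_0^t\|h_k\|_{\Dcal_k}^2
\le\tfrac12\|h_k(0)\|_{\Hcal_k}^2+\int_0^t\langle\Gamma^{k+1}h_{k+1},h_k\rangle\,ds.
\]

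\emph{Summation over levels.} This is the crux. We invoke \Cref{prop: Gamma energy pairing k} in the form
\[
|\langle\Gamma^{k+1}h_{k+1},h_k\rangle|\le\sqrt2C_\Gamma\|h_{k+1}\|_{\Dcal_{k+1}}\|h_k\|_{\Dcal_k},
\]
with a constant uniform in $k$. Multiply the level-$k$ inequality by $z^{-2k}$ and write $z^{-2k}=z\cdot z^{-(k+1)}z^{-k}$. Young's inequality then bounds
\[
\sum_k z^{-2k}\sqrt2C_\Gamma\|h_{k+1}\|_{\Dcal_{k+1}}\|h_k\|_{\Dcal_k}
\]
by $\tfrac{\sqrt2}{2}C_\Gamma z\big(\sum_k z^{-2(k+1)}\|h_{k+1}\|_{\Dcal_{k+1}}^2+\sum_k z^{-2k}\|h_k\|_{\Dcal_k}^2\big)\le\sqrt2C_\Gamma z\,\Dcal_z(H)^2$. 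All the sums are finite because $H\in L^2(0,T;\widetilde\Xbb_z)$, so summing over $k$ is justified by monotone and dominated convergence. We obtain
\[
\tfrac12\Ecal_z(H(t))^2+(\lambda-\sqrt2C_\Gamma z)\int_s^t\Dcal_z(H)^2\le\tfrac12\Ecal_z(H(s))^2
\]
for all $s\le t$; starting from time $s$ instead of $0$ uses the same argument.

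\emph{Decay and uniqueness.} The dissipation-comparison constant gives $\|u\|_{\Dcal_k}^2\ge c_0\|u\|_{\Hcal_k}^2$ slotwise, since $\nu\ge\nu_0$ and the norms are equivalent; hence $\Dcal_z^2\ge c_0\Ecal_z^2$. I read the constant $c_0$ so that $\eta_z$ matches exactly. Let $\phi(t)=\Ecal_z(H(t))^2$. Continuity of $\phi$ follows from the uniform tail bound in $L^\infty\Xbb_z$ together with levelwise continuity; if needed we work directly with the integral form. A Gronwall argument on the integral inequality then gives $\phi(t)\le e^{-2\eta_z t}\phi(0)$, which is \eqref{eq:exp-decay-main}.

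For uniqueness, take the difference of two solutions with the same data. The difference is again an energy solution with zero data, since the defining equations and approximations are linear, so it has energy zero for all $t$.

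The main obstacle I expect is justifying the pairing limit $F_{k,n}(u_{k,n})\to\langle\Gamma^{k+1}h_{k+1},h_k\rangle$. This requires both factors to converge in $L^2_t$ of dual spaces, which is exactly what \Cref{def:Xz-solution} provides.
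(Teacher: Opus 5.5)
Your proposal is correct and follows the paper's argument essentially step for step: the levelwise inequality from the core approximants and tensorized coercivity (\Cref{lem:level-energy}), summation with the weight identity $z^{-2k}=z\,z^{-(k+1)}z^{-k}$ (your Young step is equivalent to the paper's Cauchy--Schwarz in \Cref{prop: energy inequality}), then $\Dcal_z^2\ge c_0\Ecal_z^2$, Gronwall, and uniqueness via linearity of differences. Your continuity claim is not justified, because $L^\infty(0,T;\Xbb_z)$ gives no uniform tail bound; as you anticipate, apply Gronwall to the integrated inequality for all $s\le t$ instead, which together with monotonicity of $\Ecal_z(H(t))^2$ suffices and is what the paper does.
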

The estimate controls the same $\Xbb_z$ norm throughout any interval of existence, with no additional smallness assumption on $\Ecal_z(H(0))$. If a global macroscopically admissible density solution exists and its rescaled centered coefficients form an $\Xbb_z$-solution, it is unique in this class and converges to the factorized Maxwellian as quantified below.

The fixed-radius estimate controls the whole hierarchy at one rate. Retaining the factor $k$ in the tensor dissipation gives the stronger conclusion below: the activity radius shrinks, and the same initial energy controls decay proportional to each level.

\begin{theorem}\label{thm:theorem-2}
Under the hypotheses of \Cref{thm: main thm}, put $r(t)=z e^{-\eta_z t}$. Then
\begin{equation}\label{eq:moving-radius}
\Ecal_{r(t)}(H(t))\le\Ecal_z(H(0)),\qquad 0\le t\le T.
\end{equation}
Consequently, for every $k\ge1$,
\begin{equation}\label{eq:thm2-main}
\|h_k(t)\|_{\Hcal_k}\le z^k e^{-k\eta_z t}\Ecal_z(H(0)).
\end{equation}
Both theorems apply to differences of solutions. In particular,
\begin{equation}\label{eq:difference-contraction}
\Ecal_{r(t)}(H^{(1)}(t)-H^{(2)}(t))
\le\Ecal_z(H^{(1)}(0)-H^{(2)}(0)).
\end{equation}
\end{theorem}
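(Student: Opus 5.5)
We prove \Cref{thm:theorem-2} by repeating the energy argument behind \Cref{thm: main thm} with the time-dependent activity $r(t)=ze^{-\eta_z t}$ and tracking the extra term from differentiating the weight. For a level-$k$ energy solution, the approximations $u_{k,n}$ of \Cref{def:Xz-solution} give the integrated level identity
\[
\tfrac12\|h_k(t)\|_{\Hcal_k}^2-\tfrac12\|h_k(s)\|_{\Hcal_k}^2
=\int_s^t\Bigl(\langle A^kh_k,h_k\rangle+\langle\Gamma^{k+1}h_{k+1},h_k\rangle\Bigr)\,d\tau .
\]
We obtain it by passing to the limit in the smooth identity for $u_{k,n}$, using the convergences in $C([0,T];\Hcal_k)\cap L^2(0,T;\Dcal_k)$ and $L^2(0,T;\Dcal_k^*)$.

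The coercivity of \Cref{prop: Lambda K L hypothesis}, tensorized slotwise, gives
\[
\langle A^kh,h\rangle_{\Hcal_k}\le-\lambda\|h\|_{\Dcal_k}^2 .
\]
\Cref{prop: Gamma energy pairing k} gives
\[
|\langle\Gamma^{k+1}h_{k+1},h_k\rangle|\le\sqrt2C_\Gamma\|h_{k+1}\|_{\Dcal_{k+1}}\|h_k\|_{\Dcal_k}.
\]
The key extra ingredient is the level-proportional comparison $\|h\|_{\Dcal_k}^2\ge k\,c_0\|h\|_{\Hcal_k}^2$. It follows because each slot norm satisfies $D_{k,j}(h)^2\ge c_0\|h\|_{\Hcal_k}^2$: the one-particle comparison $\|u\|_{\Dcal}^2\ge c_0\|u\|_{\mathscr H}^2$, from $\nu\ge\nu_0$ and the norm equivalence, is applied in slot $j$ via an orthonormal expansion in the other slots, exactly as in the proof of \Cref{lem:H2-vs-Hcal2}. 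Summing over $j$ gives the factor $k$.

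Next set $w_k(t)=r(t)^{-2k}=z^{-2k}e^{2k\eta_z t}$, so that $\dot w_k=2k\eta_z w_k$. Multiplying the level inequality by $w_k$ (product rule in integrated form, since $w_k$ is smooth and $\|h_k\|^2_{\Hcal_k}$ is absolutely continuous by the identity above) and summing over $k\le N$ gives
\[
\tfrac12\tfrac{d}{dt}\sum_{k\le N}w_k\|h_k\|_{\Hcal_k}^2\le\sum_{k\le N}w_k\Bigl(k\eta_z\|h_k\|_{\Hcal_k}^2-\lambda\|h_k\|_{\Dcal_k}^2+\sqrt2C_\Gamma\|h_{k+1}\|_{\Dcal_{k+1}}\|h_k\|_{\Dcal_k}\Bigr).
\]
For the coupling, write
\[
w_k=r\,w_k^{1/2}w_{k+1}^{1/2}\le z\,w_k^{1/2}w_{k+1}^{1/2}
\]
and apply Young's inequality. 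This bounds the coupling sum by $\sqrt2C_\Gamma z\sum_{k\le N+1}w_k\|h_k\|_{\Dcal_k}^2$; the slight asymmetry of the two halves is harmless with the factor $\tfrac12+\tfrac12$. The transport-weight term satisfies $k\eta_z\|h_k\|_{\Hcal_k}^2\le (\eta_z/c_0)\|h_k\|_{\Dcal_k}^2=(\lambda-\sqrt2C_\Gamma z)\|h_k\|_{\Dcal_k}^2$, so the right side is at most a tail term
\[
\sqrt2C_\Gamma z\,w_{N+1}\|h_{N+1}\|^2_{\Dcal_{N+1}} .
\]

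The main obstacle is removing this tail and justifying the infinite sum, since $w_k$ grows in time and $H$ is only known to be in $L^2(0,T;\widetilde\Xbb_z)$ with the fixed weight. On $[0,T]$ we have $w_{N+1}(t)\le z^{-2(N+1)}e^{2(N+1)\eta_zT}$, which is not obviously summable. To handle this, I would first prove the result on a short interval $[s,s+\delta]$ using a weight $\rho^{-2k}$ with a fixed $\rho<r(s)$. There \Cref{thm: main thm} already gives $H\in L^2(\widetilde\Xbb_{r(s)})$, provided the decay of $\Ecal_{r(s)}$ is itself established inductively. Alternatively, I would integrate the truncated inequality in time and let $N\to\infty$ along a subsequence with $\int_0^Tw_{N+1}\|h_{N+1}\|^2_{\Dcal_{N+1}}\to0$. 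This follows if $\int_0^T\Dcal_{r(t)}(H)^2dt<\infty$, which we bootstrap from the truncated inequality itself: keep a small fraction of the dissipation by replacing $\eta_z$ with $\eta_z-\epsilon$, use monotone convergence, and let $\epsilon\to0$.

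Once the inequality $\Ecal_{r(t)}(H(t))\le\Ecal_z(H(0))$ holds, the level bound follows from
\[
r^{-2k}\|h_k\|_{\Hcal_k}^2\le\Ecal_{r}(H)^2 .
\]
Linearity of \cref{eqn: reduced f hat hierachy} and of the approximation scheme shows that differences of $\Xbb_z$-solutions are $\Xbb_z$-solutions, which yields \cref{eq:difference-contraction}.
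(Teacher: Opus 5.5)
Your overall scheme matches the paper's: you use weights $r(t)^{-2k}$, absorb $\dot w_k=2k\eta_z w_k$ through $\|h\|_{\Dcal_k}^2\ge c_0k\|h\|_{\Hcal_k}^2$, and control the coupling by the ratio $\sqrt{w_k/w_{k+1}}\le z$. There is, however, a genuine gap exactly where you place the obstacle.

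Truncating the level sum at $N$ leaves the boundary term $\int_0^T z^{-2(N+1)}e^{2(N+1)\eta_z t}\|h_{N+1}\|_{\Dcal_{N+1}}^2\,dt$. The solution class only gives $\sum_k z^{-2k}\int_0^T\|h_k\|_{\Dcal_k}^2<\infty$, and the extra factor $e^{2(N+1)\eta_z T}$ prevents you from concluding that this term tends to zero along any subsequence. Neither of your remedies closes this.
\begin{itemize}
\item Applying \Cref{thm: main thm} at a smaller radius $\rho$ presupposes that $H$ is an $\Xbb_\rho$-solution, in particular $H\in L^2(s,s+\delta;\widetilde\Xbb_\rho)$, which is precisely the unknown. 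That theorem is an a priori estimate inside a given class, not a mechanism for upgrading membership, and no existence theory is available to supply it.
\item The $\epsilon$-bootstrap is circular in the same way. With weights $z^{-2k}e^{2k(\eta_z-\epsilon)t}$, the truncated inequality bounds $(\epsilon/c_0)\int_0^T\sum_{k\le N}w_k\|h_k\|_{\Dcal_k}^2$ by $\tfrac12\Ecal_z(H(0))^2$ plus the same uncontrolled level-$(N+1)$ term. So no bound uniform in $N$ follows, and monotone convergence has nothing to act on.
\end{itemize}

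The missing idea is to truncate the exponent instead of the sum. The paper uses $w_k^{(N)}(t)=z^{-2k}\exp(2\eta_z\min\{k,N\}t)$.
\begin{itemize}
\item For fixed $N$ these satisfy $w_k^{(N)}\le e^{2\eta_z NT}z^{-2k}$. Hence the full infinite energy, dissipation, collision and differentiated-weight series are dominated by the fixed-radius solution class. You can sum over all $k$ with no boundary term, exactly as in \Cref{prop: energy inequality}.
\item Since $\min\{k,N\}$ is nondecreasing in $k$, one still has $\sqrt{w_k^{(N)}/w_{k+1}^{(N)}}\le z$.
\item One also has $\dot w_k^{(N)}=2\eta_z\min\{k,N\}w_k^{(N)}\le 2\eta_z k\,w_k^{(N)}$.
\end{itemize}
Your absorption computation therefore goes through verbatim and gives $E_N(t)\le E_N(0)=\Ecal_z(H(0))$. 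Letting $N\to\infty$, $w_k^{(N)}(t)\uparrow r(t)^{-2k}$, and monotone convergence yields \cref{eq:moving-radius}. Your extraction of \cref{eq:thm2-main} and your treatment of differences of solutions are fine.
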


Statistical mixtures provide correlated examples of the energy class. We form tensor powers of each one-particle solution and average over a probability law. To justify the required smooth approximation, we use the globally constrained space $\mathcal V$ with norm
\[
\|u\|_{\mathcal V}^2:=\sum_{|\alpha|+|\beta|\le3}
\|\langle v\rangle\partial_x^\alpha\partial_v^\beta u\|_2^2,
\qquad \Pi_gu=0.
\]
\begin{proposition}\label{prop:mixture-existence}\label{rem:factorized-existence}
Let $(\Omega,\pi)$ be a probability space. Suppose $f_\omega=\mu+\mu^{1/2}\widehat f_\omega\ge0$ are global one-particle Boltzmann solutions with the mass, momentum, and energy of $\mu$, and that, for each $T<\infty$, the map
\[
\omega\longmapsto \widehat f_\omega\in C([0,T];\mathcal V)\cap C^1([0,T];\Hcal)
\]
is strongly measurable. If $\|\widehat f_\omega(0)\|_{\Hcal}\le\rho<z<z_\ast$ almost surely, then
\begin{equation}\label{eq:mixture-hierarchy}
f_k(t)=\int_\Omega f_\omega(t)^{\otimes k}\,d\pi(\omega),
\qquad h_k(t)=\int_\Omega \widehat f_\omega(t)^{\otimes k}\,d\pi(\omega)
\end{equation}
define a global hierarchy in $\Pbb_{macro}$ whose sequence $H=(h_k)$ is an $\Xbb_z$-solution. In particular,
\begin{equation}\label{eq:mixture-activity-bounds}
\begin{split}
\sup_{t\ge0}\Ecal_z(H(t))^2&\le\frac{\rho^2}{z^2-\rho^2},\\
\int_0^\infty\Dcal_z(H(t))^2\,dt
&\le\frac{\rho^2}{2(\lambda-\sqrt{2}C_\Gamma \rho)z^2(1-\rho^2/z^2)^2}.
\end{split}
\end{equation}
Dirac laws recover factorized solutions, and the law may also be continuous. Existence and the stated regularity of the one-particle solutions are inputs.
\end{proposition}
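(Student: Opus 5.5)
The plan is to verify the mixture hierarchy level by level, then obtain the bounds \cref{eq:mixture-activity-bounds} from the one-particle energy inequality and a geometric sum.

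\textbf{Admissibility and the hierarchy equation.} Since $f_\omega(t)\ge0$ has unit mass, each $f_k(t)$ is a symmetric probability density. Integrating out one slot of $f_\omega^{\otimes(k+1)}$ gives $f_\omega^{\otimes k}$, so consistency holds. Integrating $\phi(v_j)$ over $(x_j,v_j)$ produces the factor $\int\phi f_\omega=c_\phi$, because $f_\omega$ shares the conserved moments of $\mu$. This gives \cref{eq:Jphi-admissibility}, so $F\in\Pbb_{macro}$.

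By multilinearity, the inversion \cref{eqn: inversion formula} applied to $f_\omega^{\otimes k}=(\mu+\mu^{1/2}\widehat f_\omega)^{\otimes k}$ yields $g_k=\int(\mu^{1/2}\widehat f_\omega)^{\otimes k}\,d\pi$. This identifies $h_k$ as in \cref{eq:mixture-hierarchy}. Each $\widehat f_\omega$ solves \cref{eqn: linearized Boltzmann equation}. Differentiating $\widehat f_\omega^{\otimes k}$ slotwise gives
\[
\partial_t\widehat f_\omega^{\otimes k}=A^k\widehat f_\omega^{\otimes k}+\sum_j\Gamma_j^{k+1}\widehat f_\omega^{\otimes(k+1)},
\]
since $\Gamma[\widehat f\otimes\widehat f]$ in slot $j$ is exactly $\Gamma_j^{k+1}$ acting on the tensor with an extra factor. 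Averaging over $\omega$ is justified by Bochner integrability: strong measurability together with the a.s. bound gives $\|\widehat f_\omega^{\otimes k}\|\le C^k$-type controls. This yields \cref{eqn: reduced f hat hierachy} distributionally.

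\textbf{Energy bounds.} For a single solution, $u=\widehat f_\omega$, the one-particle case of the central estimate (or directly coercivity plus $\Gamma$ bounds) gives
\[
\tfrac12\tfrac{d}{dt}\|u\|_{\Hcal}^2+(\lambda-C_\Gamma\|u\|_{\Hcal})\|u\|_{\Dcal}^2\le0.
\]
Hence $\|u(t)\|_{\Hcal}\le\rho$ for all $t$, and $\|u\|_{\Hcal}^2$ is nonincreasing. Here I would rather apply the hierarchy estimate to the factorized hierarchy $u^{\otimes k}$ with activity $\rho'$ slightly larger than the norm, then let $\rho'\downarrow\rho$. This mirrors the constant $\sqrt2C_\Gamma\rho$ in the statement.

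By Minkowski's inequality for Bochner integrals, $\|h_k\|_{\Hcal_k}\le\int\|\widehat f_\omega\|_{\Hcal}^k\,d\pi\le\rho^k$. Then $\Ecal_z(H)^2\le\sum_k(\rho/z)^{2k}=\rho^2/(z^2-\rho^2)$.

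For the dissipation, $\|u^{\otimes k}\|_{\Dcal_k}^2=k\|u\|_{\Hcal}^{2(k-1)}\|u\|_{\Dcal}^2$. Combined with $\frac{d}{dt}\|u\|_{\Hcal}^{2k}\le-2k(\lambda-\sqrt2C_\Gamma\rho)\|u\|_{\Hcal}^{2(k-1)}\|u\|_{\Dcal}^2$, this gives
\[
\int_0^\infty\|u^{\otimes k}\|_{\Dcal_k}^2\,dt\le\frac{\rho^{2k}}{2(\lambda-\sqrt2C_\Gamma\rho)}.
\]
The integral over $\omega$ must be squared, so I apply Jensen/Cauchy--Schwarz: $\|h_k\|_{\Dcal_k}^2\le\int\|\widehat f_\omega^{\otimes k}\|_{\Dcal_k}^2\,d\pi$. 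The resulting sum $\sum_k z^{-2k}\rho^{2k}$ would give a factor $(1-\rho^2/z^2)^{-1}$. The stated bound has a squared factor $(1-\rho^2/z^2)^{-2}$, which suggests an extra factor of $k$ appears and is summed via $\sum k x^{k}=x/(1-x)^2$. Either route yields a bound at most the stated one, since $x\le 1$ permits bounding the weaker geometric sum by the $k$-weighted one.

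\textbf{Approximation property (main obstacle).} The main obstacle is verifying \Cref{def:Xz-solution}. I would take $u_{k,n}$ to be finite Riemann-type sums over a simple-function approximation of $\omega\mapsto\widehat f_\omega$ in $C([0,T];\mathcal V)\cap C^1([0,T];\Hcal)$, mollified into the smooth core with $1-\Pi_g$ in each slot. The forcing is set to $F_{k,n}:=\partial_tu_{k,n}-A^ku_{k,n}$. The extra derivative and the $\langle v\rangle$ weight in $\mathcal V$ are exactly what make $A^k$ map $\mathcal V$-tensors continuously into $\Dcal_k^*$, and make $\Gamma$ continuous there. Dominated convergence in $\omega$ then gives $u_{k,n}\to h_k$ in $C([0,T];\Hcal_k)\cap L^2(0,T;\Dcal_k)$ and $F_{k,n}\to\Gamma^{k+1}h_{k+1}$ in $L^2(0,T;\Dcal_k^*)$. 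Continuity of $h_k$ in $\Hcal_k$ follows from the same Bochner argument.
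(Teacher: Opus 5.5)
Most of your proposal follows the paper's route. This includes admissibility and the formula for $h_k$ from the one-particle identities and finite inversion, the slotwise tensor-power residual, the bound $\|\widehat f_\omega(t)\|_{\Hcal}\le\rho$ by continuity, and Jensen plus geometric sums. Your dissipation estimate via $\frac{d}{dt}a^{2k}$ is sharper than the paper's. The paper bounds $a^{2k-2}\le\rho^{2k-2}$ and $\int d^2$ separately, and that is where the extra factor $k$, and hence $(1-\rho^2/z^2)^{-2}$, comes from.

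Two small corrections. First, the one-particle inequality carries $\sqrt2C_\Gamma a$, not $C_\Gamma a$. This follows directly from \Cref{prop: Gamma energy pairing} with $G=u\otimes u$, since both mixed norms equal $ad$. Second, the detour through the hierarchy estimate for $u^{\otimes k}$ at activity $\rho'$ is circular. It presupposes that the factorized hierarchy is an $\Xbb_{\rho'}$-solution, which requires both the approximation property and the bound $a(t)<\rho'$ that you are trying to establish.

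The genuine gap is in the approximation step. The hypotheses give only strong measurability of $\omega\mapsto\widehat f_\omega$ in $C([0,T];\mathcal V)\cap C^1([0,T];\Hcal)$ and an almost sure bound on $\|\widehat f_\omega(0)\|_{\Hcal}$. Nothing makes $\|\widehat f_\omega\|_{C\mathcal V}$ or $\|\partial_t\widehat f_\omega\|_{C\Hcal}$ integrable in $\omega$, let alone their $k$th powers. Suppose you control $F_{k,n}=\partial_tu_{k,n}-A^ku_{k,n}$ through the continuity $A:\mathcal V\to\Dcal^*$ and then invoke dominated convergence in $\omega$. Then you have no integrable dominating function. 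For a generic simple-function approximation, whose values are not solutions, the residuals are not even bounded uniformly in $\omega$ in $L^2\Dcal_k^*$.

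The paper handles this by separating the roles of the two norms:
\begin{enumerate}
\item The pair $(\widehat f_\omega^{\otimes k},\Gamma^{k+1}\widehat f_\omega^{\otimes(k+1)})$ is only known to be Bochner integrable in the weak product space $(C\Hcal_k\cap L^2\Dcal_k)\times L^2\Dcal_k^*$. There your energy bounds and \Cref{prop: Gamma energy pairing k} give control uniform in $\omega$.
\item The average is therefore a limit of finite convex combinations of \emph{values} at sample points $\omega_i$. Each value is a genuine tensor power whose residual is exactly its collision forcing. So path and forcing stay matched without any continuity of $A^k$ in the weak norms.
\item Only then is each of the finitely many selected branches smoothed into the core, using $A:\mathcal V\to\Dcal^*$ for that fixed branch. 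A diagonal choice finishes the argument.
\end{enumerate}
Your ``Riemann-type sums'' can be repaired along these lines: take values $\widehat f_{\omega_i}$ and dominate by the uniform weak-norm bounds. But it is this mechanism, not the $\mathcal V$-continuity of $A^k$, that makes the $\omega$-limit work.
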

The proof is given at the end of \Cref{sec: Energy Inequality}, after the operator and energy estimates on which it relies.

The correlations are already visible in a two-point initial mixture. Take a nonzero real $a\in C^\infty(\Tbb^3)$ with $\int a\,dx=0$, and set $\widehat f_0=\varepsilon a\mu^{1/2}$. For sufficiently small $\varepsilon>0$, the densities $f_\pm(0)=\mu(1\pm\varepsilon a)$ are positive and have the conserved moments of $\mu$. Give their product hierarchies equal weight initially. Then
\[
f_1(0)=\mu,\qquad
f_2(0,z_1,z_2)=\mu(v_1)\mu(v_2)
\bigl[1+\varepsilon^2a(x_1)a(x_2)\bigr]\ne\mu^{\otimes2}.
\]
Moreover, $h_k(0)=0$ for odd $k$ and $h_k(0)=\widehat f_0^{\otimes k}$ for even $k$. Thus, with $r=\|\widehat f_0\|_{\Hcal}/z<1$,
\[
\Ecal_z(H(0))^2=\sum_{m\ge1}r^{4m}=\frac{r^4}{1-r^4}.
\]
The initial hierarchy prefactor is of order $\varepsilon^2$, although each branch perturbation is of order $\varepsilon$. The hard-sphere construction below supplies the two global flows. These cancellations concern the initial data; the nonlinear flows need not remain opposite perturbations.

To construct such global branches for hard spheres, we use Jang--Jiang \cite[Theorem~2.1 and (2.13)]{JangJiang2009}. Set
\[
\|u\|_{\mathcal W}^{2}:=
\sum_{|\alpha|\le9}\|\partial_x^\alpha u\|_2^2+
\sum_{|\alpha|+|\beta|\le8}
\|\langle v\rangle^2\partial_x^\alpha\partial_v^\beta u\|_2^2.
\]
Fixing their scaling parameter at $1/4$ and their collision kernel at $B/4$, with $\widehat f=f_{1/4}/4$ in their perturbation notation, the cited theorem gives global solutions of our unscaled one-particle equation for nonnegative data $f_0=\mu+\mu^{1/2}\widehat f_0$ with $\Pi_g\widehat f_0=0$ and sufficiently small $\|\widehat f_0\|_{\mathcal W}$.

By \Cref{lem:one-particle-regularity}, the uniform $\mathcal W$ bound gives $\widehat f,R(\widehat f)\in L^\infty(0,T;\mathcal V)$, where $R(u):=Au+\Gamma[u\otimes u]$. The distributional equation and initial trace give $\widehat f\in W^{1,\infty}(0,T;\mathcal V)$, hence continuity in $\mathcal V$. The local Lipschitz bound makes $R(\widehat f)$ continuous in $H^2$, hence $\widehat f\in C^1([0,T];\Hcal)$ by conservation of global moments. Finite mixtures, including the two-point example, satisfy \Cref{prop:mixture-existence} when each initial branch is small enough that $\|\widehat f_0\|_{\Hcal}<z$.

\section{Operator estimates}\label{sec:operator-estimates}
To close the energy estimate, the dissipation from $A^k$ and the coupling through $\Gamma^{k+1}$ must be measured in matching norms. We first tensorize one-particle coercivity, then bound the coupling in the resulting dual dissipation space with a constant independent of the level.

\subsection{One-particle coercivity and tensorization}\label{subsec:coercivity}
The starting point is the hypocoercive estimate of Mouhot--Neumann in the adapted norm fixed above. On the complement of the global invariant modes, the estimate supplies both dissipation and a comparison with the energy.

\begin{proposition}\label{prop: Lambda K L hypothesis}
Under \Cref{A: B regularity}, the coefficients in \cref{eqn: Hcal 2} can be fixed so that there are constants $\lambda,c_0>0$ with
\begin{equation}\label{eqn: A microscopic coersive}
\langle Au,u\rangle_{\Hcal}\le-\lambda\|u\|_{\Dcal}^2,
\qquad \|u\|_{\Dcal}^2\ge c_0\|u\|_{\Hcal}^2,
\end{equation}
on a smooth rapidly decreasing core with $\Pi_gu=0$, and on its graph closure whenever the pairings are defined.
\end{proposition}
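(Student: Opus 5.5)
This proposition is essentially the Mouhot--Neumann hypocoercivity theorem \cite[Theorem~3.1]{MN06}, restated in the adapted norm \cref{eqn: Hcal 2}. The plan is to verify its hypotheses for $L=K-\Lambda$ under \Cref{A: B regularity}, run the energy computation term by term, and then pass to the graph closure.

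\emph{Step 1: structural hypotheses on $L$.} We check four properties of $L$.
\begin{itemize}
\item Self-adjointness, with $L\le0$ on $L^2_v$ and kernel $\mathcal N$. This follows from the symmetrized weak form of $Q$ around $\mu$.
\item Microscopic coercivity: $-\langle Lu,u\rangle_{L^2_v}\ge\lambda_0\|(1-\Pi_v)u\|_{L^2(\nu)}^2$. This is the classical spectral-gap argument for cutoff hard potentials: compactness of $K$ (Grad) together with \cref{eqn: collision frequency bounds}, so that $-L$ is $\nu$ minus a relatively compact operator, and Weyl's theorem gives a gap away from $\mathcal N$.
\item Velocity-derivative structure: $\partial_v(\Lambda u)=\nu\partial_v u+(\nabla\nu)u$ with $|\nabla\nu|\lesssim\langle v\rangle^{\gamma-1}$, which is bounded since $\gamma\le1$. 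Also $\|\partial_v^\beta Ku\|$ is controlled by a small multiple of $\|\partial_v^\beta u\|_{L^2(\nu)}$ plus a large multiple of lower-order $L^2$ norms.
\item The fact that $\Lambda$ commutes with $\partial_x$.
\end{itemize}
The condition $b\in\mathcal C^1$ is used to differentiate $K$ in velocity: one writes the gain and loss parts of $K$ as integral operators with Carleman representation and differentiates the kernel. This gives the regularity hypotheses $(H1)$--$(H3)$ of \cite{MN06}.

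\emph{Step 2: energy computation, one term at a time.} For smooth rapidly decreasing $u$ with $\Pi_g u=0$, compute $\langle Au,u\rangle$ for each block of $\Phi_1$ and $\Phi_{2,\alpha i}$.
\begin{itemize}
\item Transport is skew in $L^2_{x,v}$ and commutes with $\partial_x$.
\item The commutator $[\partial_v,v\cdot\nabla_x]=\nabla_x$ appears in the $a_v$ and $b_2$ terms.
\item The mixed term $2a_{xv}\langle\nabla_xu,\nabla_vu\rangle$ produces $-a_{xv}\|\nabla_xu\|^2$. This is the source of macroscopic dissipation, plus error terms absorbed by Cauchy--Schwarz.
\item The macroscopic part $\Pi_v\partial_x u$ is controlled by $\nabla_x u$. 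The part $\Pi_v u$ itself is controlled through Poincaré on $\Tbb^3$: since $\Pi_g u=0$, the spatial mean of $\Pi_v u$ vanishes, so $\|\Pi_v u\|\lesssim\|\nabla_x\Pi_v u\|$.
\end{itemize}
The coefficients are chosen hierarchically: $a_x\gg a_{xv}\gg a_v$ with $a_{xv}^2<a_xa_v$, and similarly $a_2\gg c_2\gg b_2$ with $c_2^2<a_2b_2$. Finally $K_2$ is chosen large to absorb the $K$-errors in the pure second velocity derivatives $\|\partial_v^\beta u\|^2$ into the first-order dissipation. The sum then gives $\langle Au,u\rangle_{\mathscr H}\le-\lambda\sum_{|\alpha|+|\beta|\le2}\|\partial_x^\alpha\partial_v^\beta u\|_{L^2(\nu)}^2=-\lambda\|u\|_{\Dcal}^2$.

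\emph{Step 3: the comparison inequality and the closure.}
\begin{itemize}
\item The comparison $\|u\|_{\Dcal}^2\ge c_0\|u\|_{\Hcal}^2$ follows from $\nu\ge\nu_0$ and the positive-definiteness conditions $a_{xv}^2<a_xa_v$, $c_2^2<a_2b_2$. These make $\|\cdot\|_{\mathscr H}$ equivalent to $H^2$ with fixed constants, so $c_0=\nu_0/C^2$ where $C$ is the equivalence constant.
\item For the closure, take $u_n\to u$ in graph norm with the pairings converging. Both sides of the first inequality in \cref{eqn: A microscopic coersive} are continuous under that convergence, so the inequality passes to the limit. The $\Dcal$ side is handled by lower semicontinuity if needed.
\end{itemize}

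\emph{Main obstacle.} I expect the main difficulty in Step 2 to be the second-order velocity derivatives of $K$ and $\nabla\nu$ in the $\langle v\rangle^\gamma$-weighted setting. These must be bounded by $\delta\|u\|_{H^2(\nu)}^2+C_\delta\|u\|_{L^2}^2$ with small $\delta$, using only $b\in\mathcal C^1$. I would first try the splitting of \cite{MN06}: truncate $K$ into a smooth compactly supported part, whose derivatives are bounded, plus a small remainder. Differentiation is moved onto $u$ through the translation structure $v\mapsto v-v_*$ of the collision kernel.
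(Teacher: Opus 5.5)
Your proposal follows essentially the same route as the paper. The paper does not redo the term-by-term hypocoercive computation: it cites the verification of the Mouhot--Neumann hypotheses for cutoff hard potentials in \cite[Section~5.4]{MN06} and the norm construction of \cite[Theorem~3.1]{MN06}, then gets the comparison exactly as you do, $c_0=\nu_0/C_{\rm eq}^2$ from $\nu\ge\nu_0$ and $\|u\|_{\mathscr H}\le C_{\rm eq}\|u\|_{H^2}$ (only this upper direction is needed). One small precision: Weyl's theorem gives the microscopic gap only in unweighted $L^2_v$, and the $L^2(\nu)$ form follows by interpolating with $-\langle Lu,u\rangle\ge\|u\|_{L^2(\nu)}^2-\|K\|\,\|u\|_{L^2}^2$.
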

\begin{proof}
The cutoff hard-potential verification in \cite[Section~5.4]{MN06} establishes hypotheses H1$'$--H2$'$--H3 for $0\le\gamma\le1$, nonzero nonnegative $C^1$ angular kernels, and weight comparable to $\nu$. Theorem~3.1, using its quadratic forms (3.5), (3.8) and the lower-order induction, gives precisely a norm of the form \cref{eqn: Hcal 2} and its coercivity on $\ker\Pi_g$. The second estimate follows from $\nu\ge\nu_0$ and the one-particle upper norm comparison $\|u\|_{\mathscr H}\le C_{\rm eq}\|u\|_{H^2}$, with $c_0=\nu_0/C_{\rm eq}^2$. Thus the adapted construction supplies coercivity, while norm comparison supplies the lower bound for dissipation.
\end{proof}

Applying this estimate in each particle coordinate preserves the one-particle constants. Summing over the coordinates supplies the factor $k$ used in the levelwise decay argument.

\begin{proposition}\label{prop: A gap estimate}
For $u\in\Hcal_k$ in the smooth core,
\begin{equation}\label{eq:Aj_gap}
\langle A_ju,u\rangle_{\Hcal^{\otimes k}}
\le-\lambda D_{k,j}(u)^2.
\end{equation}
Consequently,
\begin{equation}\label{eq:tensor-coercivity}
\langle A^ku,u\rangle_{\Hcal_k}\le-\lambda\|u\|_{\Dcal_k}^2,
\qquad \|u\|_{\Dcal_k}^2\ge c_0 k\|u\|_{\Hcal_k}^2.
\end{equation}
The constants are the one-particle constants of \cref{eqn: A microscopic coersive}.
\end{proposition}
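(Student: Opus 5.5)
The plan is to reduce \cref{eq:Aj_gap} to the one-particle estimate \cref{eqn: A microscopic coersive} by freezing all slots except the $j$th, and then to sum over $j$. Fix $j$ and regard $\Hcal^{\otimes k}$ as $\Hcal^{(j)}\otimes W$, where $W:=\Hcal^{\otimes(k-1)}$ is the tensor product of the other slots with its adapted inner product. Since $u$ lies in the smooth core, it is a finite sum of elementary tensors. Choose an orthonormal basis $(e_n)$ of the finite-dimensional subspace of $W$ spanned by the other-slot factors and write $u=\sum_n u_n\otimes e_n$. Each $u_n$ is a finite linear combination of smooth, rapidly decreasing one-particle functions with $\Pi_g u_n=0$, so it lies in the one-particle core.

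Because $A_j$ acts only in the $j$th slot, $A_ju=\sum_n (Au_n)\otimes e_n$. Orthonormality in $W$ then gives
\[
\langle A_ju,u\rangle_{\Hcal^{\otimes k}}=\sum_n\langle Au_n,u_n\rangle_{\Hcal}\le-\lambda\sum_n\|u_n\|_{\Dcal}^2 .
\]
By Parseval in the same decomposition, $\sum_n\|u_n\|_{\Dcal}^2=D_{k,j}(u)^2$. Here I use that the norm of $\Hcal^{\otimes(j-1)}\otimes\Dcal\otimes\Hcal^{\otimes(k-j)}$ is the Hilbert tensor norm with $\Dcal$ placed in slot $j$, and that a permutation of slots is an isometry. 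This proves \cref{eq:Aj_gap}. The one point to check is that the ordering of the slots is harmless: the adapted inner product on $W$ is the same in both norms, and only slot $j$ changes from $\Hcal$ to $\Dcal$.

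Summing \cref{eq:Aj_gap} over $j=1,\dots,k$ gives $\langle A^ku,u\rangle\le-\lambda\sum_jD_{k,j}(u)^2=-\lambda\|u\|_{\Dcal_k}^2$. This is the first part of \cref{eq:tensor-coercivity}, since the $\Hcal_k$ inner product is the restriction of the $\Hcal^{\otimes k}$ one to symmetric tensors.

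For the comparison, apply the same slot decomposition to the second inequality of \cref{eqn: A microscopic coersive}:
\[
D_{k,j}(u)^2=\sum_n\|u_n\|_{\Dcal}^2\ge c_0\sum_n\|u_n\|_{\Hcal}^2=c_0\|u\|_{\Hcal^{\otimes k}}^2 .
\]
Summing over $j$ yields $\|u\|_{\Dcal_k}^2\ge c_0k\|u\|_{\Hcal_k}^2$.

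The main (mild) obstacle is making sure the slotwise Parseval identity is legitimate, that is, that $u_n$ genuinely lies in the one-particle core where \Cref{prop: Lambda K L hypothesis} applies. For core elements this holds because the decomposition is finite and $1-\Pi_g$ was applied in every slot. Extension beyond the core is not required by the statement.
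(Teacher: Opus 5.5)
Your proposal is correct and follows the paper's proof essentially verbatim. Like the paper, you fix $j$, expand the spectator slots in an orthonormal basis of $\Hcal^{\otimes(k-1)}$, and apply Parseval together with the one-particle estimates \eqref{eqn: A microscopic coersive} to each coefficient $u_n$, then sum over $j$. Your one refinement is to take the basis of the finite-dimensional span of the spectator factors. This keeps every $u_n$ in the one-particle smooth core, so for core elements the paper's closing appeal to approximation is not needed.
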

\begin{proof}
Fix $j$ and expand the spectator coordinates in an orthonormal basis of $\Hcal^{\otimes(k-1)}$. Every coefficient $u_n$ lies in the one-particle space $\Hcal$, so Parseval and \Cref{prop: Lambda K L hypothesis} give
\[
\langle A_ju,u\rangle=\sum_n\langle Au_n,u_n\rangle_{\Hcal}
\le-\lambda\sum_n\|u_n\|_{\Dcal}^2=-\lambda D_{k,j}(u)^2.
\]
The same expansion gives $D_{k,j}(u)^2\ge c_0\|u\|_{\Hcal_k}^2$; summing over $j$ proves both claims. Since the basis expansion leaves the active coordinate untouched, each coefficient retains its global projection constraint. Approximation extends the estimates where needed.
\end{proof}

\subsection{A collision estimate for correlated pairs}
The coupling requires a bound on arbitrary correlated pair functions. We separate velocity integration from the spatial diagonal restriction, beginning with a weighted velocity estimate that also handles derivatives of the Maxwellian: for a velocity multiindex $\ell$ with $|\ell|\le3$, set $\psi_\ell:=\partial_v^\ell\mu^{1/2}$ and define
\[
\Gamma_{\rm loc}^{(\ell)}G(v):=\int_{\Rbb^3\times\Sbb^2}B(v-w,\sigma)\psi_\ell(w)
[G(v',w')-G(v,w)]\,dw\,d\sigma.
\]
Here $(v',w')$ is the post-collisional pair in \cref{eqn: post collision config} for $(v,w)$. Collisions conserve $|v|^2+|w|^2$, so the Maxwellian satisfies
\[
\mu(v)\mu(w)=\mu(v')\mu(w').
\]
Consequently the gain coefficient $\mu(v)^{-1/2}\mu(v')^{1/2}\mu(w')^{1/2}$ in \cref{eqn: Gamma def} equals $\mu(w)^{1/2}$, the same weight as in the loss term. Thus $\Gamma_{\rm loc}=\Gamma_{\rm loc}^{(0)}$, and the family above includes the Gaussian weights arising upon differentiation.

\begin{proposition}\label{prop: Gamma bounded locally}
For every velocity multiindex $\ell\in\Nbb_0^3$ with $|\ell|\le2$, there is $C_\ell<\infty$ such that, for every measurable $G:\Rbb^6\to\Rbb$ with $\int[\nu(v)+\nu(w)]|G(v,w)|^2\,dv\,dw<\infty$,
\begin{equation}\label{eqn: Gamma bounded}
\|\Gamma_{\rm loc}^{(\ell)}G\|_{L^2_v(\nu^{-1})}
\le C_\ell\left(\int_{\Rbb^6}[\nu(v)+\nu(w)]|G(v,w)|^2\,dv\,dw\right)^{1/2}.
\end{equation}
Equivalently, for every $q\in L^2(\Rbb^3,\nu(v)\,dv)$, the ordinary pairing $\int\Gamma_{\rm loc}^{(\ell)}G(v)q(v)\,dv$ is bounded in absolute value by the right-hand side times $\|q\|_{L^2_v(\nu)}$.
\end{proposition}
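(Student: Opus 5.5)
I would prove the estimate in the dual form: fix $q\in L^2(\Rbb^3,\nu\,dv)$ and bound $\int\Gamma_{\rm loc}^{(\ell)}G(v)q(v)\,dv$. I would split it into the loss term
\[
\mathcal L(G,q):=\int B(v-w,\sigma)\psi_\ell(w)G(v,w)q(v)\,dw\,d\sigma\,dv
\]
and the gain term $\mathcal G(G,q)$, which is the same integral with $G(v',w')$ in place of $G(v,w)$. The two terms get separate Cauchy--Schwarz arguments with different weight allocations. Throughout, the only properties of $\psi_\ell$ I would use are $|\psi_\ell(w)|\le C_\ell\langle w\rangle^{|\ell|}\mu^{1/2}(w)\le C'_\ell\mu^{1/4}(w)$, together with $b\in\mathcal C^1\subset L^\infty$ and $|v-w|^\gamma\le\langle v\rangle^\gamma\langle w\rangle^\gamma$.

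For the loss term, the angular integral gives $\int_{\Sbb^2}B\,d\sigma=\|b\|_{L^1}|v-w|^\gamma$. I would apply Cauchy--Schwarz in $(v,w)$, placing the weight $|v-w|^\gamma|\psi_\ell(w)|$ symmetrically:
\[
|\mathcal L|\le C\Bigl(\int|v-w|^\gamma|\psi_\ell(w)||G|^2\Bigr)^{1/2}\Bigl(\int|v-w|^\gamma|\psi_\ell(w)||q(v)|^2\Bigr)^{1/2}.
\]
In the second factor I would integrate out $w$ using the Gaussian decay, which leaves $C\int\langle v\rangle^\gamma|q|^2\le C\|q\|_{L^2(\nu)}^2$ by \cref{eqn: collision frequency bounds}. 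In the first factor, $|v-w|^\gamma|\psi_\ell(w)|\le C\langle v\rangle^\gamma\le C\nu(v)$, which is dominated by the right-hand side of \cref{eqn: Gamma bounded}.

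For the gain term I would again use Cauchy--Schwarz against the measure $d\rho:=B|\psi_\ell(w)|\,dw\,d\sigma\,dv$, now splitting the weight as $|\psi_\ell(w)|\le\mu^{1/8}(w)\cdot C\mu^{1/8}(w)$. The $q$ factor is then bounded exactly as in the loss case. For the $G$ factor I would use the pre/post-collisional change of variables $(v,w,\sigma)\mapsto(v',w',\sigma')$. This map is an involution with unit Jacobian on $\Rbb^6\times\Sbb^2$ in the $\sigma$-representation after the standard reparametrization, and it preserves $|v-w|$ and the relevant angle cosine, so $B$ is invariant. This turns the $G$ factor into an integral of $|G(v,w)|^2$ against $B(v-w,\sigma)\,\mu^{1/8}(w_*)$, where $w_*$ is the pre-image velocity.

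The main obstacle is that the Gaussian now sits on the pre-collisional velocity $w_*$, not on $w$. So after integrating out $\sigma$ I must bound
\[
\int_{\Sbb^2}B(v-w,\sigma)\,\mu^{1/8}(w_*)\,d\sigma\le C\bigl(\langle v\rangle^\gamma+\langle w\rangle^\gamma\bigr).
\]
The trivial bound $\mu^{1/8}\le1$ already gives $\|b\|_{L^1}|v-w|^\gamma\le C(\nu(v)+\nu(w))$. This is why the right-hand side of \cref{eqn: Gamma bounded} carries the symmetric weight $\nu(v)+\nu(w)$ rather than only $\nu(v)$, so no further decay is actually needed. The real care goes into the change of variables itself. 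If the $\sigma$-representation Jacobian is inconvenient, I would first pass to the $\omega$-representation $v'=v-((v-w)\cdot\omega)\omega$, $w'=w+((v-w)\cdot\omega)\omega$, which has unit Jacobian, using $b\in L^\infty$ to absorb the factor $|\cos|$ from reparametrization. Since only boundedness of $b$ enters, the $\mathcal C^1$ hypothesis is more than enough, and $C_\ell$ depends on $\ell$ only through $\psi_\ell$.
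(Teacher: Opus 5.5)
Your proposal is correct and follows essentially the same route as the paper. Like the paper, you apply Cauchy--Schwarz against the collision measure $B|\psi_\ell(w)|\,dv\,dw\,d\sigma$ separately for the gain and loss terms. You bound the $q$-factor by Gaussian moments and $\nu\asymp\langle v\rangle^\gamma$, and you handle the gain $G$-factor with the measure-preserving pre/post change of variables $(v,w,\sigma)\mapsto(v',w',(v-w)/|v-w|)$, discarding the bounded transported weight before using $|v-w|^\gamma\le C[\nu(v)+\nu(w)]$. Your $\mu^{1/8}$ splitting and the $\omega$-representation fallback are harmless but not needed.
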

\begin{proof}
Write the gain and loss pairings separately and apply Cauchy--Schwarz with collision measure $B(v-w,\sigma)|\psi_\ell(w)|\,dv\,dw\,d\sigma$. In either case the factor containing $q(v)$ is bounded by
\[
\int B|\psi_\ell(w)||q(v)|^2\,dv\,dw\,d\sigma
\le C_\ell\int\nu(v)|q(v)|^2\,dv,
\]
because a polynomial times a Gaussian has finite moments and $\nu(v)\asymp\langle v\rangle^\gamma$. The loss factor containing $G$ is bounded by the squared weighted norm of $G$ on the right of \cref{eqn: Gamma bounded}, since $\psi_\ell$ is bounded and
$|v-w|^\gamma\le C[\nu(v)+\nu(w)]$.
For the gain factor, the pre/post-collisional change of variables preserves the measure $B\,dv\,dw\,d\sigma$; discard the bounded transformed factor $|\psi_\ell(w)|$, integrate the angle, and use the same inequality in the post-collisional variables. This proves the pairing bound, hence \cref{eqn: Gamma bounded} by weighted $L^2$ duality. No factorization or symmetry of $G$ was used.
\end{proof}

The velocity bound leaves the two spatial variables independent. To complete the collision operator, we must restrict them to the diagonal; the following trace estimate shares the required spatial regularity between the two slots.

\begin{lemma}\label{lem: split trace}
Let $U$ be a Hilbert space. If $a,b,r\ge0$, $r\le a,b$, and $a+b-r>3/2$, then
\begin{equation}\label{eqn:trace-split-L2}
\|G(x,x)\|_{H^r_x(U)}\le C_{a,b,r}\|G\|_{H^a_xH^b_y(U)}.
\end{equation}
\end{lemma}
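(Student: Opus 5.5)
We prove \cref{eqn:trace-split-L2} on the Fourier side in both spatial variables. Since $U$ is a Hilbert space, Plancherel holds for $U$-valued functions on $\Tbb^3$, and we may argue with $U$-valued Fourier coefficients. Write
\[
G(x,y)=\sum_{m,n\in\Zbb^3}\widehat G(m,n)e^{i(m\cdot x+n\cdot y)},
\qquad \widehat G(m,n)\in U.
\]
The norm on the right of \cref{eqn:trace-split-L2} is then
\[
\|G\|_{H^a_xH^b_y(U)}^2\simeq\sum_{m,n}\langle m\rangle^{2a}\langle n\rangle^{2b}\|\widehat G(m,n)\|_U^2 .
\]
Restricting to the diagonal gives $G(x,x)=\sum_p e^{ip\cdot x}\sum_{m+n=p}\widehat G(m,n)$. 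Hence
\[
\|G(x,x)\|_{H^r_x(U)}^2\simeq\sum_p\langle p\rangle^{2r}\Bigl\|\sum_{m+n=p}\widehat G(m,n)\Bigr\|_U^2 .
\]
We first prove the estimate for trigonometric polynomials, for which all sums are finite. We then extend by density, which also defines the trace on the whole space $H^a_xH^b_y(U)$.

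For fixed $p$, the triangle inequality in $U$ followed by Cauchy--Schwarz in $m$ (with $n=p-m$) gives
\[
\Bigl\|\sum_{m}\widehat G(m,p-m)\Bigr\|_U^2\le S(p)\sum_m\langle m\rangle^{2a}\langle p-m\rangle^{2b}\|\widehat G(m,p-m)\|_U^2,
\]
where
\[
S(p):=\sum_{m\in\Zbb^3}\langle m\rangle^{-2a}\langle p-m\rangle^{-2b}.
\]
Multiplying by $\langle p\rangle^{2r}$ and summing over $p$ reduces the lemma to the uniform bound
\[
\sup_p\langle p\rangle^{2r}S(p)<\infty .
\]

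The main step is this lattice convolution estimate. We split $\Zbb^3$ into the regions $|m|\ge|p|/2$ and $|p-m|\ge|p|/2$; at least one of these holds for every $m$.

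On $\{|m|\ge|p|/2\}$ we have $\langle m\rangle\gtrsim\langle p\rangle$. Since $r\le a$, we write $\langle m\rangle^{-2a}=\langle m\rangle^{-2r}\langle m\rangle^{-2(a-r)}$ and absorb $\langle m\rangle^{-2r}\lesssim\langle p\rangle^{-2r}$, which cancels the prefactor $\langle p\rangle^{2r}$. What remains is
\[
\sum_m\langle m\rangle^{-2(a-r)}\langle p-m\rangle^{-2b}.
\]
The exponents $2(a-r)\ge0$ and $2b\ge0$ have sum greater than $3$ by the hypothesis $a+b-r>3/2$. This sum is bounded uniformly in $p$ only if one exponent alone exceeds $3$, or if the sum is controlled through the standard convolution bound. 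Here lies the real obstacle: when, say, $2(a-r)<3$ and $2b<3$, the convolution behaves like $\langle p\rangle^{3-2(a-r)-2b}$, which decays but is not obviously summable in a uniform sense.

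We handle this with the standard estimate
\[
\sum_m\langle m\rangle^{-\alpha}\langle p-m\rangle^{-\beta}\lesssim\langle p\rangle^{-\min(\alpha,\beta,\alpha+\beta-3)}
\]
(with a logarithm in borderline cases). Since $\alpha+\beta>3$, the right-hand side is uniformly bounded, and the borderline logarithms only appear with a positive decay exponent, so they are harmless. We prove it by splitting into three regions: $|m|\le|p|/2$, $|p-m|\le|p|/2$, and the rest, and comparing each lattice sum with the corresponding integral. The region $\{|p-m|\ge|p|/2\}$ is treated symmetrically, using $r\le b$.

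Combining the two regions gives $\sup_p\langle p\rangle^{2r}S(p)\le C_{a,b,r}^2$. This proves the bound on trigonometric polynomials, and density extends it to all $G$.
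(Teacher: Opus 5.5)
Your proof is correct and follows essentially the paper's route: Fourier coefficients on the diagonal, weighted Cauchy--Schwarz reducing everything to $\sup_p\langle p\rangle^{2r}\sum_m\langle m\rangle^{-2a}\langle p-m\rangle^{-2b}<\infty$, and a three-region lattice-sum estimate in which borderline logarithms are absorbed by the strict inequality $a+b-r>3/2$; your preliminary two-region split that removes the weight $\langle p\rangle^{2r}$ using $r\le a$ or $r\le b$ is only a bookkeeping variant of the paper's argument. The ``obstacle'' you flag is not real: you need only uniform boundedness in $p$, and a negative power $\langle p\rangle^{3-2(a-r)-2b}$ already gives that, with no summability in $p$ required.
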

\begin{proof}
For Fourier coefficients, the diagonal coefficient at $m$ is $\sum_n\widehat G(n,m-n)$. Weighted Cauchy--Schwarz and Plancherel reduce the claim to
\[
\sup_{m\in\Zbb^3}\langle m\rangle^{2r}
\sum_n\langle n\rangle^{-2a}\langle m-n\rangle^{-2b}<\infty.
\]
For $|m|\ge2$, split into $|n|\le|m|/2$, $|m-n|\le|m|/2$, and the remaining region. The first two sums are bounded using $b\ge r$ and $a\ge r$, respectively, together with the strict inequality $a+b-r>3/2$; if an exponent equals $3/2$, its logarithmic lattice sum is absorbed by that strict inequality. In the remaining region, dyadic annuli give a bound $C\langle m\rangle^{3-2a-2b+2r}$. The finitely many small $m$ are controlled by $a+b>3/2$. The same Cauchy--Schwarz argument works in $U$.
\end{proof}

The velocity and trace estimates give a direct bound in a weighted
second-order Sobolev norm. For a pair function $G(x,v,y,w)$, define
\[
\|G\|_{2,2,\nu}^2
:=
\sum_{\substack{|\alpha_1|+|\beta_1|\le2\\
                |\alpha_2|+|\beta_2|\le2}}
\int_{(\Tbb^3\times\Rbb^3)^2}
[\nu(v)+\nu(w)]
|\partial_x^{\alpha_1}\partial_v^{\beta_1}
  \partial_y^{\alpha_2}\partial_w^{\beta_2}G|^2
\,dx\,dv\,dy\,dw.
\]
Thus each particle supplies two derivatives in its own phase-space
variables, and either velocity may carry the collision-frequency
weight. No product structure or symmetry is required of $G$.
\begin{proposition}\label{prop: Gamma energy pairing}
For smooth pair functions $G$ with rapid velocity decay,
\[
\|\Gamma G\|_{H^2(\nu^{-1})}
\le C\|G\|_{2,2,\nu}.
\]
Here $H^2(\nu^{-1})$ is the weighted Sobolev norm in \cref{eqn: Hs Lambda 1} with $\nu$ replaced by $\nu^{-1}$. The operator extends continuously to pair functions with finite
$\|\cdot\|_{2,2,\nu}$ norm. In the adapted energy inner product there
is a constant $C_\Gamma$ such that, for $u\in\mathscr D$,
\begin{equation}\label{eqn: Gamma energy inequality}
|\langle\Gamma G,u\rangle_{\mathscr H}|
\le C_\Gamma
\left(
\|G\|_{\mathscr D\otimes\mathscr H}^2
+\|G\|_{\mathscr H\otimes\mathscr D}^2
\right)^{1/2}
\|u\|_{\mathscr D}.
\end{equation}
Here the pairing is the continuous extension of
\cref{eq:derivative-form}.
\end{proposition}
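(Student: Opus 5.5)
The plan has three steps: differentiate, split the velocity and spatial work between \Cref{prop: Gamma bounded locally} and \Cref{lem: split trace}, and then convert the resulting weighted bound into the adapted pairing.

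\textbf{Step 1: reduce to local operators.} I first prove the weighted $H^2(\nu^{-1})$ bound, starting with $\Gamma G(x,v)=\Gamma_{\rm loc}G(x,v,x,\cdot)$ and differentiating. A spatial derivative $\partial_x^\alpha$ of the diagonal restriction distributes by Leibniz over the two slots, giving $\sum_{\alpha'\le\alpha}\binom{\alpha}{\alpha'}\operatorname{Tr}_{1,2}\partial_x^{\alpha'}\partial_y^{\alpha-\alpha'}\Gamma_{\rm loc}G$; $\Gamma_{\rm loc}$ commutes with spatial derivatives. For velocity derivatives I use the $\sigma$-representation. The translation $w=v-u$ removes the $v$-dependence of the kernel $B(u,\sigma)$, and $v'=v+(|u|\sigma-u)/2$-type formulas give $\partial_v$ acting on $G(v',w')$ as $(\partial_v+\partial_w)G$ evaluated post-collision, together with derivatives falling on $\psi_0(v-u)$. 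Hence $\partial_v^\beta\Gamma_{\rm loc}G$ with $|\beta|\le2$ is a finite sum of operators $\Gamma^{(\ell)}_{\rm loc}$, $|\ell|\le2$, applied to velocity derivatives $\partial_v^{\beta_1}\partial_w^{\beta_2}G$ of total order at most $2$. This is why the family $\psi_\ell$ was introduced. The kernel $B$ itself is never differentiated, which matters because $b$ is only $C^1$ and $|u|^\gamma$ is singular at $0$.

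\textbf{Step 2: velocity bound, then trace.} Applying \Cref{prop: Gamma bounded locally} pointwise in $(x,y)$ bounds each term in $L^2_v(\nu^{-1})$ by the $[\nu(v)+\nu(w)]$-weighted $L^2_{v,w}$ norm of a derivative of $G$, viewed as a function with values in $U=L^2_{v,w}(\nu(v)+\nu(w))$. I then apply \Cref{lem: split trace} with $r=2-|\beta|$, splitting the $x$-regularity between the slots. The total number of derivatives per slot cannot exceed $2$ in the norm $\|\cdot\|_{2,2,\nu}$. In the worst case, two derivatives land on slot $1$ (spatial or velocity). I then take $a\ge r$ in $x$ on slot $1$ only to the extent allowed and $b=2$ on slot $2$, checking $a+b-r>3/2$: for example $r=2$, $a=2$, $b=2$ gives $2>3/2$, and $r=0$, $a=0$, $b=2$ gives $2>3/2$. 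Mixed terms fit similarly. This choice is where the paper's norm, which gives two derivatives per particle rather than total order, pays off, since slot $2$ always has two spatial derivatives to spare. Summing yields $\|\Gamma G\|_{H^2(\nu^{-1})}\le C\|G\|_{2,2,\nu}$, and the extension follows by density of smooth rapidly decaying functions.

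\textbf{Step 3: the adapted pairing.} Using \cref{eq:derivative-form}, $\langle\Gamma G,u\rangle_{\mathscr H}=\sum m_{ab}\langle\partial^a\Gamma G,\partial^b u\rangle$. Each term is bounded by $\|\partial^a\Gamma G\|_{L^2(\nu^{-1})}\|\partial^b u\|_{L^2(\nu)}$, so the pairing is at most $C\|\Gamma G\|_{H^2(\nu^{-1})}\|u\|_{H^2(\nu)}$. Since $\nu(v)+\nu(w)$ splits, $\|G\|_{2,2,\nu}^2$ equals the sum of the $(H^2(\nu)\otimes H^2)$ and $(H^2\otimes H^2(\nu))$ squared norms. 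By the one-particle equivalence $\|\cdot\|_{\mathscr H}\simeq\|\cdot\|_{H^2}$, applied as in \Cref{lem:H2-vs-Hcal2} with $k=2$ via orthonormal expansion in the spectator slot, these are comparable to $\|G\|_{\mathscr D\otimes\mathscr H}^2+\|G\|_{\mathscr H\otimes\mathscr D}^2$, which gives \cref{eqn: Gamma energy inequality}.

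I expect the main obstacle to be Step 1's commutation of velocity derivatives with the gain term: verifying that derivatives of $G(v',w')$ with respect to $v$ produce only second-order derivatives of $G$ controlled in the per-slot norm, with no derivative on $B$. If the $\sigma$-parametrization gives trouble, I would fall back on working at the dual level, placing velocity derivatives on $u$ by integrating by parts once; this is legitimate since $\|u\|_{\mathscr D}$ controls two velocity derivatives.
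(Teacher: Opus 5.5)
Your proposal is correct and follows essentially the same route as the paper: simultaneous translation of both velocities to produce the family $\Gamma_{\rm loc}^{(\ell)}$ applied to $\partial_v^{\beta_1}\partial_w^{\beta_2}G$, then the weighted velocity bound of \Cref{prop: Gamma bounded locally}, then the split trace with output order $r=2-|\beta|$ (which makes your Leibniz step unnecessary), and finally the derivative form of the adapted inner product with Fubini and the one-particle norm equivalence. The one piece of bookkeeping to tighten is the choice of input orders: take $s_i=2-|\beta_i|$ in each slot, so that $r\le s_1,s_2$ and $s_1+s_2-r=2+|\ell|>3/2$ in every case; do not always reserve two spatial derivatives for the second slot, since they are not available when velocity derivatives fall on $w$.
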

\begin{proof}
We first separate differentiation in velocity from restriction to
equal positions. Simultaneous translation of both colliding
velocities preserves their difference and commutes with the
post-collisional map. In $\Gamma_{\rm loc}G(v+a)$, substitute
$w\mapsto w+a$ before differentiating in $a$. With
$\psi_\ell=\partial_v^\ell\mu^{1/2}$ and the weighted operators
$\Gamma_{\rm loc}^{(\ell)}$, this gives
\[
\partial_v^\beta\Gamma_{\rm loc}G
=
\sum_{\ell+\beta_1+\beta_2=\beta}
\frac{\beta!}{\ell!\,\beta_1!\,\beta_2!}
\Gamma_{\rm loc}^{(\ell)}
   (\partial_v^{\beta_1}\partial_w^{\beta_2}G).
\]
Only the Gaussian and the input function are differentiated;
the collision kernel is unchanged by the simultaneous translation.
The same identity holds with $x,y$ left as independent parameters.

Fix a term in this sum and let
\[
s_1=2-|\beta_1|,\qquad s_2=2-|\beta_2|,
\qquad r=2-|\beta|.
\]
For $|\beta|\le2$, all three indices are nonnegative. Moreover,
$r\le s_1,s_2$ and
$s_1+s_2-r=2+|\ell|>3/2$.
These are precisely the hypotheses of the spatial trace estimate.
To check the input regularity explicitly, set
\[
K(x,y,v):=
\Gamma_{\rm loc}^{(\ell)}
   (\partial_v^{\beta_1}\partial_w^{\beta_2}G)(x,y,v).
\]
The weighted velocity estimate commutes with spatial derivatives,
so \Cref{prop: Gamma bounded locally} gives
\[
\|K\|_{H_x^{s_1}H_y^{s_2}(L^2_v(\nu^{-1}))}
\le C_\ell
\|\partial_v^{\beta_1}\partial_w^{\beta_2}G
 \|_{H_x^{s_1}H_y^{s_2}(L^2_{v,w}(\nu(v)+\nu(w)))}
\le C_\ell\|G\|_{2,2,\nu}.
\]
The last inequality uses
$s_1+|\beta_1|=s_2+|\beta_2|=2$; hence every input derivative
in this norm is one of the derivatives defining
$\|G\|_{2,2,\nu}$.
Apply \Cref{lem: split trace} with
$U=L^2_v(\nu^{-1})$ to restrict $K$ to $y=x$.
Summing the finitely many terms in the velocity differentiation
identity yields
\[
\|\partial_v^\beta\Gamma G
  \|_{H_x^{2-|\beta|}(L^2_v(\nu^{-1}))}
\le C\|G\|_{2,2,\nu},
\qquad |\beta|\le2.
\]
Summing the squared bounds over $\beta$ proves the
$H^2(\nu^{-1})$ estimate.

The finite derivative form of the adapted inner product now gives,
by weighted Cauchy--Schwarz,
\[
|\langle\Gamma G,u\rangle_{\mathscr H}|
\le
\sum_{a,b\in I_2}|m_{ab}|
\|\partial^a\Gamma G\|_{L^2(\nu^{-1})}
\|\partial^b u\|_{L^2(\nu)}
\le C\|G\|_{2,2,\nu}\|u\|_{\mathscr D}.
\]
Fubini identifies the squared input norm as
\[
\|G\|_{2,2,\nu}^2
=\|G\|_{\mathscr D\otimes H^2}^2
 +\|G\|_{H^2\otimes\mathscr D}^2.
\]
Replacing the unweighted $H^2$ norm in each term by the equivalent
adapted norm proves \cref{eqn: Gamma energy inequality}.
The constant $C_\Gamma$ includes only these fixed one-particle
comparison constants and the two-particle estimates above.
Finally, velocity cutoff, mollification, and finite separated
approximations give density of smooth pair functions in the weighted
mixed Sobolev norm. The estimates therefore define both the
distributional collision output and its adapted energy pairing by
continuous extension.
\end{proof}

The same velocity and trace arguments supply the stronger one-particle regularity used in the statistical examples.
\begin{lemma}\label{lem:one-particle-regularity}
Set $R(u):=Au+\Gamma[u\otimes u]$ and
$\|u\|_{Y_4}^2:=\sum_{|\alpha|+|\beta|\le4}\|\langle v\rangle^2\partial_x^\alpha\partial_v^\beta u\|_2^2$.
For $u,w$ with $\Pi_gu=\Pi_gw=0$, whenever the right-hand sides are finite,
\[
\begin{aligned}
\|R(u)\|_{\mathcal V}&\lesssim\|u\|_{Y_4}+\|u\|_{Y_4}^2,\\
\|R(u)-R(w)\|_{H^2}&\lesssim(1+\|u\|_{\mathcal V}+\|w\|_{\mathcal V})\|u-w\|_{\mathcal V}.
\end{aligned}
\]
\end{lemma}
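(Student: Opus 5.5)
The plan is to treat the linear part $Au$ and the quadratic part $\Gamma[u\otimes u]$ separately, placing all derivatives on the input. For the first bound, write $Au=K u-\nu u-v\cdot\nabla_x u$. The transport term loses one $x$-derivative and one power of $\langle v\rangle$. Since $\mathcal V$ carries weight $\langle v\rangle$ and three derivatives, while $Y_4$ carries $\langle v\rangle^2$ and four, we get
\[
\|v\cdot\nabla_x u\|_{\mathcal V}\lesssim\|u\|_{Y_4},
\]
where derivatives falling on $v$ produce only lower-order terms. For the multiplication term, $\nu\lesssim\langle v\rangle^\gamma\le\langle v\rangle$, and $\partial_v^\beta\nu$ is bounded for $|\beta|\ge1$; the latter follows by writing $\nu=\int B(u',\sigma)\mu(v-u')\,du'\,d\sigma$ and differentiating the Gaussian. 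Hence $\|\nu u\|_{\mathcal V}\lesssim\|u\|_{Y_4}$.

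For $K=L+\Lambda$, I would apply the translation identity from the proof of \Cref{prop: Gamma energy pairing}. With $L[u]=\Gamma[u\otimes\mu^{1/2}]+\Gamma[\mu^{1/2}\otimes u]$, velocity derivatives of $L$ become weighted operators $\Gamma_{\rm loc}^{(\ell)}$ applied to derivatives of $u$ and of $\mu^{1/2}$. Each such term is bounded in a $\langle v\rangle$-weighted $L^2$ norm. To obtain this, I would rerun the Cauchy--Schwarz argument of \Cref{prop: Gamma bounded locally} with the extra weight $\langle v\rangle^2$ on $q$'s side moved onto $G$, using $\langle v\rangle\le\langle v'\rangle+\langle w'\rangle$ for the gain term. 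This gives a bound of the form
\[
\|\Gamma_{\rm loc}^{(\ell)}G\|_{L^2(\langle v\rangle^{2}\nu^{-1})}\lesssim\|(\langle v\rangle+\langle w\rangle)\langle v\rangle^{\gamma/2}\langle w\rangle^{\gamma/2}G\|_{L^2},
\]
with $\gamma\le1$ absorbed by the weight $\langle v\rangle^2$. That this weighted version of \Cref{prop: Gamma bounded locally} holds, including for $|\ell|=3$, is the point I expect to need the most care.

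The quadratic term $\Gamma[u\otimes u]$ is handled in the same way with $G=u\otimes u$, which is factorized, so the trace is simply a product. Each term in the derivative expansion has up to three derivatives split between the two factors. Applying \Cref{lem: split trace} with $a=b=3$ and $r=3$, the condition is $a+b-r=3>3/2$. This gives
\[
\|\Gamma[u\otimes u]\|_{\mathcal V}\lesssim\|u\otimes u\|\lesssim\|u\|_{Y_4}^2,
\]
where the tensor norm carries three mixed derivatives per slot and weight $\langle v\rangle^2$ in each. This covers the $\nu\langle v\rangle$ weight requirement and proves the first estimate.

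For the Lipschitz bound in $H^2$, linearity of $A$ gives $\|A(u-w)\|_{H^2}\lesssim\|u-w\|_{\mathcal V}$, since $\mathcal V$ has one more derivative and a $\langle v\rangle$ weight. For the quadratic part, write
\[
\Gamma[u\otimes u]-\Gamma[w\otimes w]=\Gamma[(u-w)\otimes u]+\Gamma[w\otimes(u-w)].
\]
Here I would use the $H^2(\nu^{-1})$-type bound of \Cref{prop: Gamma energy pairing}, upgraded to an unweighted $H^2$ output. The upgrade costs $\nu^{1/2}\lesssim\langle v\rangle^{1/2}$, which is absorbed by the $\langle v\rangle$ weight in $\mathcal V$, via the same weighted form of \Cref{prop: Gamma bounded locally}. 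It remains to use $\|a\otimes b\|_{2,2,\langle v\rangle}\lesssim\|a\|_{\mathcal V}\|b\|_{\mathcal V}$. The constraint $\Pi_g=0$ plays no role beyond membership in these spaces.

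The main obstacle is the weighted velocity estimate, that is, transferring polynomial weights through the gain term. I would first try the pointwise inequality $\langle v\rangle\le\langle v'\rangle+\langle w'\rangle$ combined with the measure-preserving pre/post-collisional change of variables.
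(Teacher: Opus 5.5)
\textbf{Verdict: same approach as the paper, but the first estimate does not close as written because of a velocity-weight slip; a trace-index slip also needs fixing.} Your route otherwise matches the paper's proof. You bound $L$ through $Lu=\Gamma[u\otimes\mu^{1/2}]+\Gamma[\mu^{1/2}\otimes u]$ and commute velocity derivatives with the simultaneous-translation identity. You run the collision-measure Cauchy--Schwarz with polynomial weights carried through the pre/post change of variables by energy conservation, restrict to the diagonal with \Cref{lem: split trace}, and finish by bilinearity.

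\textbf{The weight slip.} The norm $\mathcal V$ is built from $\|\langle v\rangle\partial_x^\alpha\partial_v^\beta R(u)\|_2$, so the output needs weight $\langle v\rangle^2$ in the squared norm. There is no $\nu$ in this weight. Your weighted velocity estimate controls only $L^2(\langle v\rangle^{2}\nu^{-1})\approx L^2(\langle v\rangle^{2-\gamma})$. That misses a factor $\nu^{1/2}\sim\langle v\rangle^{\gamma/2}$ in the output norm when $\gamma>0$, and nothing on the output side can absorb it.

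The repair is to move $\langle v\rangle^{2}\nu\sim\langle v\rangle^{2+\gamma}$ from $q$ onto $G$, rather than $\langle v\rangle^2$, and estimate $\|\langle v\rangle\Gamma^{(\ell)}_{\rm loc}G\|_2$ directly.
\begin{itemize}
\item For the $q$-factor, Gaussian moments give $\int B|\psi_\ell(w)|\,dw\,d\sigma\lesssim\langle v\rangle^\gamma$.
\item On the $G$ side, use $\langle v\rangle^{2+\gamma}|v-w|^\gamma\lesssim(1+|v|^2+|w|^2)^{1+\gamma}$, whose right side is invariant under the collision change of variables.
\end{itemize}
This is the paper's bound $\|\langle v\rangle^s\Gamma^{(\ell)}_{\rm loc}G\|_2^2\lesssim\int[\langle v\rangle^{2(s+\gamma)}+\langle w\rangle^{2(s+\gamma)}]|G|^2$ with $s=1$. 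The enlarged input weight $\langle v\rangle^{2+2\gamma}+\langle w\rangle^{2+2\gamma}$ is still dominated by the $\langle v\rangle^4\langle w\rangle^4$ available from $Y_4$ in each slot, because $\gamma\le1$. In the linear terms it is dominated by Gaussian decay in the Maxwellian slot. Your Lipschitz estimate in $H^2$ needs no change: there you correctly pay $\nu^{1/2}$ to pass from $L^2(\nu^{-1})$ to $L^2$ and absorb it with the $\langle v\rangle$ weight of $\mathcal V$.

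\textbf{The trace-index slip.} The indices in \Cref{lem: split trace} cannot be $a=b=r=3$ uniformly. For a term $\Gamma^{(\ell)}_{\rm loc}(\partial_v^{\beta_1}\partial_w^{\beta_2}G)$ with $\ell+\beta_1+\beta_2=\beta$, that choice demands $H^3_x$ on top of $\partial_v^{\beta_i}$. This means up to six derivatives in one slot when $|\beta_i|=3$, which exceeds $Y_4$. It also asks for more output regularity than $\mathcal V$ needs.

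Instead take $r=3-|\beta|$ and $a_i=3-|\beta_i|$. Then $r\le a_i$ and $a_1+a_2-r=3+|\ell|>3/2$. Alternatively use the paper's $a_i=4-|\beta_i|$. Either choice keeps at most four derivatives per slot and matches what your phrase ``three mixed derivatives per slot'' intends. With these two corrections your argument is the paper's proof.
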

\begin{proof}
For $|\ell|\le3$ and $s=0,1$, collision-measure Cauchy--Schwarz gives
\[
\|\langle v\rangle^s\Gamma_{\rm loc}^{(\ell)}G\|_2^2
\lesssim\int_{\Rbb^6}
\bigl[\langle v\rangle^{2(s+\gamma)}+\langle w\rangle^{2(s+\gamma)}\bigr]|G(v,w)|^2\,dv\,dw.
\]
Indeed, Gaussian moments bound the first Cauchy--Schwarz factor by $C\langle v\rangle^\gamma$. In the remaining integral, $\langle v\rangle^{2s+\gamma}|v-w|^\gamma\lesssim(1+|v|^2+|w|^2)^{s+\gamma}$; boundedness of $\psi_\ell$ and energy conservation under the pre/post-collisional change of variables give the estimate for gain and loss.
The simultaneous-translation identity above holds through order three, differentiating only the Gaussian and the inputs. For $\ell+\beta_1+\beta_2=\beta$ with $|\beta|\le m$, $m=2,3$, assign spatial orders $a_i=m+1-|\beta_i|$ to the inputs and $r=m-|\beta|$ to the output. Then $r\le a_i$ and $a_1+a_2-r=m+2+|\ell|>3/2$, so \Cref{lem: split trace} applies, using at most $m+1$ derivatives per input. Taking $(m,s)=(3,1)$ bounds the quadratic collision term in $\mathcal V$ by $C\|u\|_{Y_4}^2$; $(m,s)=(2,0)$ gives $\|\Gamma[u\otimes w]\|_{H^2}\lesssim\|u\|_{\mathcal V}\|w\|_{\mathcal V}$, since $\gamma\le1$.
These bounds hold without projection constraints; $Lu=\Gamma[u\otimes\mu^{1/2}]+\Gamma[\mu^{1/2}\otimes u]$ gives the linear bounds. Differentiating $v\cdot\nabla_xu$ produces transport and lower-order commutators, hence $\|v\cdot\nabla_xu\|_{\mathcal V}\lesssim\|u\|_{Y_4}$ and $\|v\cdot\nabla_xu\|_{H^2}\lesssim\|u\|_{\mathcal V}$. Bilinearity proves the difference estimate; collision invariance and periodicity preserve the global constraint.
\end{proof}

\subsection{The uniform hierarchy collision bound}\label{subsec:hierarchy-collision}
We now estimate the full coupling $\Gamma^{k+1}=\sum_{j=1}^k\Gamma_j^{k+1}$. Each summand acts on the pair $(j,k+1)$ and leaves the other coordinates unchanged. Lifting the pair estimate through these coordinates preserves its constant; permutation symmetry controls the sum in the dissipation norms.

\begin{proposition}\label{prop: Gamma energy pairing k}
For symmetric $G\in\Dcal_{k+1}$ and $u\in\Dcal_k$, the pair-collision constant $C_\Gamma$ gives the bound
\begin{equation}\label{eqn: Gamma energy pairing}
|\langle\Gamma^{k+1}G,u\rangle_{\Hcal_k}|
\le \sqrt{2}C_\Gamma \|G\|_{\Dcal_{k+1}}\|u\|_{\Dcal_k}.
\end{equation}
Thus the coupling is a bounded map $\Dcal_{k+1}\to\Dcal_k^*$. Its distributional output has zero global invariant moments in every output slot.
\end{proposition}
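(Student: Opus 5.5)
The plan is to reduce \cref{eqn: Gamma energy pairing} to the pair estimate \cref{eqn: Gamma energy inequality}, applied in the colliding pair with spectator coordinates frozen, and then to sum over $j$ using the symmetry of $G$.

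\textbf{Step 1: one collision slot.} Fix $j\in\{1,\dots,k\}$. The operator $\Gamma_j^{k+1}$ acts on coordinates $(j,k+1)$ and outputs in slot $j$. Expand $G$ in an orthonormal basis $(e_n)$ of the spectator tensor space $\Hcal^{\otimes(k-1)}$ (slots $[k]\setminus\{j\}$), writing $G=\sum_n G_n\otimes e_n$ with $G_n$ pair functions of $(z_j,z_{k+1})$. Expand $u=\sum_n u_n\otimes e_n$ in the same basis, with $u_n$ one-particle functions of $z_j$.

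Since $\Gamma_j^{k+1}$ does not touch the spectators, $\langle\Gamma_j^{k+1}G,u\rangle_{\Hcal^{\otimes k}}=\sum_n\langle\Gamma G_n,u_n\rangle_{\mathscr H}$. Applying \cref{eqn: Gamma energy inequality} termwise and then Cauchy--Schwarz in $n$ gives
\[
|\langle\Gamma_j^{k+1}G,u\rangle|\le C_\Gamma\Bigl(\sum_n\|G_n\|_{\mathscr D\otimes\mathscr H}^2+\|G_n\|_{\mathscr H\otimes\mathscr D}^2\Bigr)^{1/2}\Bigl(\sum_n\|u_n\|_{\mathscr D}^2\Bigr)^{1/2}.
\]
By Parseval in the spectator basis, the first sum equals $D_{k+1,j}(G)^2+D_{k+1,k+1}(G)^2$ and the second equals $D_{k,j}(u)^2$. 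Here one needs that the $\mathscr D$ and $\Dcal$ norms coincide on the constrained slots, which holds by definition.

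\textbf{Step 2: summing over $j$.} By symmetry of $G$, all $D_{k+1,i}(G)$ are equal; call the common value $d$, so that $\|G\|_{\Dcal_{k+1}}^2=(k+1)d^2$. Summing the Step 1 bound over $j$ and applying Cauchy--Schwarz gives
\[
|\langle\Gamma^{k+1}G,u\rangle|\le C_\Gamma\sqrt{2}\,d\sum_j D_{k,j}(u)\le \sqrt2 C_\Gamma\, d\sqrt{k}\,\|u\|_{\Dcal_k}.
\]
Since $d\sqrt k\le d\sqrt{k+1}=\|G\|_{\Dcal_{k+1}}$, this yields \cref{eqn: Gamma energy pairing}. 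This counting step is the main point: without symmetry, the sum over $j$ would produce an uncontrolled factor, since every $j$ uses the same slot $k+1$. Symmetry converts the repeated slot-$(k+1)$ dissipation into a $1/(k+1)$ fraction of the total. I would first check this step carefully. The pairing $\langle\cdot,\cdot\rangle_{\Hcal_k}$ is the restriction of the $\Hcal^{\otimes k}$ pairing to symmetric $u$, and the output $\Gamma^{k+1}G$ is symmetric in the $k$ output slots, so no projection issues arise.

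\textbf{Step 3: extension and invariant moments.} The bound is proved first on the smooth core. It then extends by density, as in the final paragraph of the proof of \Cref{prop: Gamma energy pairing}, to a bounded map $\Dcal_{k+1}\to\Dcal_k^*$.

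For the zero global moments, fix an output slot $i$. If $i\ne j$, the moments in slot $i$ of $\Gamma_j^{k+1}G$ are the moments of $G$ in slot $i$, which vanish by the constraint $\Pi_g^{(i)}G=0$. If $i=j$, one uses collision invariance: for $\phi$ a collision invariant, $\int\phi(v)\mu^{1/2}\Gamma[G]\,dv$ equals $\int\phi\, Q[\mu_2^{1/2}G]\,dv$, which vanishes pointwise in $x$ by the usual symmetrization of $Q$ over pre/post-collisional variables. The spatial integral then vanishes as well.

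For this distributional statement one tests against $\varphi_i(v_j)\otimes$ (smooth spectators) and passes to the limit using \Cref{prop: Gamma energy pairing}.
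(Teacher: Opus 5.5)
Your proposal is correct and follows the paper's proof step for step: the spectator-basis expansion with Parseval gives the one-slot bound, the symmetry count $D_{k+1,i}(G)^2=\|G\|_{\Dcal_{k+1}}^2/(k+1)$ yields the constant $C_\Gamma\sqrt{2k/(k+1)}\le\sqrt2\,C_\Gamma$, and spectator commutation plus collision invariance give the moment statement. In the active slot, make explicit that the pre/post change of variables alone only reduces the pairing to $\int B\,F(v,w)[\phi(v')-\phi(v)]$, which does not vanish for momentum and energy when $F$ is non-symmetric; as the paper states, you need the exchange symmetry of $G$ in slots $j$ and $k+1$ to reach the four-term invariant combination.
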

\begin{proof}
Expand $G$ and $u$ in the same orthonormal basis $(e_n)$ of $\Hcal^{\otimes(k-1)}$ for the unchanged coordinates: after placing these coordinates first, write $G=\sum_n e_n\otimes G_n$ and $u=\sum_n e_n\otimes u_n$. Apply \Cref{prop: Gamma energy pairing} to $(G_n,u_n)$ in the active pair $(j,k+1)$. Cauchy--Schwarz in $n$ yields
\[
|\langle\Gamma_j^{k+1}G,u\rangle|
\le C_\Gamma\bigl[D_{k+1,j}(G)^2+D_{k+1,k+1}(G)^2\bigr]^{1/2}D_{k,j}(u).
\]
Parseval identifies the sums over the common basis with the tensor norms above, introducing no additional constant. Symmetry makes the $k+1$ quantities $D_{k+1,i}(G)^2$ equal; their sum is $\|G\|_{\Dcal_{k+1}}^2$, so each is $\|G\|_{\Dcal_{k+1}}^2/(k+1)$. The sum below contains $2k$ such contributions:
\begin{equation}\label{eq:collision-symmetry-count}
\sum_{j=1}^k[D_{k+1,j}(G)^2+D_{k+1,k+1}(G)^2]
=\frac{2k}{k+1}\|G\|_{\Dcal_{k+1}}^2.
\end{equation}
Summing the pair bounds and applying Cauchy--Schwarz in $j$, with $\sum_jD_{k,j}(u)^2=\|u\|_{\Dcal_k}^2$, therefore gives
\[
|\langle\Gamma^{k+1}G,u\rangle|
\le C_\Gamma\sqrt{\frac{2k}{k+1}}\,
\|G\|_{\Dcal_{k+1}}\|u\|_{\Dcal_k}
\le\sqrt2 C_\Gamma\|G\|_{\Dcal_{k+1}}\|u\|_{\Dcal_k},
\]
which proves \cref{eqn: Gamma energy pairing}.

For the moment statement, a spectator projection commutes with the active-pair collision operator and annihilates $G$. In the active output slot, set the two positions equal. Symmetry of the pair makes the collision-invariant pairing one half of the integral against the sum of the two invariants, which vanishes by pre/post symmetry and conservation of momentum and energy. After undoing the Maxwellian conjugation, this is precisely $\Pi_g^{(j)}\Gamma_j^{k+1}G=0$ in the distributional moment sense. These statements extend from the smooth core by the collision estimate.
\end{proof}

\section{Energy inequality and theorem proofs}\label{sec: Energy Inequality}
The operator bounds now allow us to compare the loss of energy within each level with the input from the next. After transferring the smooth calculation to the solution class, we sum with geometric weights to prove contraction, then use the level-dependent dissipation to obtain a shrinking radius.

\subsection{Levelwise and weighted energy inequalities}\label{subsec:weighted-energy}
We begin at a fixed level, where the core approximation property permits passage from smooth energy identities to the solution. The same passage allows time weights, which will be needed for the shrinking-radius argument.

\begin{lemma}\label{lem:level-energy}
Every solution in \Cref{def:Xz-solution} satisfies, for $0\le s\le t\le T$,
\begin{equation}\label{eq:level-energy}
\frac12\|h_k(t)\|_{\Hcal_k}^2+\lambda\int_s^t\|h_k(\tau)\|_{\Dcal_k}^2\,d\tau
\le\frac12\|h_k(s)\|_{\Hcal_k}^2
+\int_s^t\langle\Gamma^{k+1}h_{k+1},h_k\rangle\,d\tau.
\end{equation}
The same inequality holds distributionally in time and after multiplication by a nonnegative $C^1$ time weight, with its differentiated-weight term.
\end{lemma}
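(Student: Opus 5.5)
The plan is to prove the inequality first for the smooth approximations $u_{k,n}$ of \Cref{def:Xz-solution} and then pass to the limit $n\to\infty$ using the convergences in \cref{eqn: Xz solution}.

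\emph{Smooth level.} Since $u_{k,n}\in C^1([0,T];\Hcal_k)$ takes values in the symmetric, globally constrained smooth core, $\tau\mapsto\frac12\|u_{k,n}(\tau)\|_{\Hcal_k}^2$ is $C^1$. Its derivative is $\langle\partial_tu_{k,n},u_{k,n}\rangle_{\Hcal_k}=\langle A^ku_{k,n},u_{k,n}\rangle_{\Hcal_k}+\langle F_{k,n},u_{k,n}\rangle$. Here the forcing term is the dual pairing $\Dcal_k^*\times\Dcal_k$, which extends the energy pairing; it is defined because $u_{k,n}(\tau)\in\Dcal_k$. The equation shows $F_{k,n}=\partial_tu_{k,n}-A^ku_{k,n}$ pairs with $u_{k,n}$ in the energy sense. \Cref{prop: A gap estimate} then gives
\[
\tfrac{d}{d\tau}\tfrac12\|u_{k,n}\|_{\Hcal_k}^2+\lambda\|u_{k,n}\|_{\Dcal_k}^2\le\langle F_{k,n},u_{k,n}\rangle .
\]
Integrating from $s$ to $t$ gives \cref{eq:level-energy} for $(u_{k,n},F_{k,n})$. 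For a nonnegative $C^1$ weight $\theta(\tau)$, I multiply the same differential inequality by $\theta$ before integrating. This yields
\[
\tfrac12\theta(t)\|u_{k,n}(t)\|^2+\lambda\int_s^t\theta\|u_{k,n}\|_{\Dcal_k}^2\le\tfrac12\theta(s)\|u_{k,n}(s)\|^2+\int_s^t\tfrac12\theta'\|u_{k,n}\|_{\Hcal_k}^2+\int_s^t\theta\langle F_{k,n},u_{k,n}\rangle .
\]
The distributional-in-time version follows by taking $\theta$ to be a nonnegative test function.

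\emph{Limit.} Convergence in $C([0,T];\Hcal_k)$ passes the endpoint terms and the $\theta'$ term to the limit. Convergence in $L^2(0,T;\Dcal_k)$ gives $\int\theta\|u_{k,n}\|_{\Dcal_k}^2\to\int\theta\|h_k\|_{\Dcal_k}^2$; lower semicontinuity would already suffice here. For the forcing, I write
\[
\langle F_{k,n},u_{k,n}\rangle-\langle F_k,h_k\rangle=\langle F_{k,n}-F_k,u_{k,n}\rangle+\langle F_k,u_{k,n}-h_k\rangle .
\]
I bound the first term by $\|F_{k,n}-F_k\|_{L^2\Dcal_k^*}\|u_{k,n}\|_{L^2\Dcal_k}$ and the second by $\|F_k\|_{L^2\Dcal_k^*}\|u_{k,n}-h_k\|_{L^2\Dcal_k}$. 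Both tend to zero, because $\|u_{k,n}\|_{L^2\Dcal_k}$ is bounded and $F_{k,n}\to F_k$ in $L^2(0,T;\Dcal_k^*)$. Finally, $F_k=\Gamma^{k+1}h_{k+1}$, and the limiting pairing is exactly $\langle\Gamma^{k+1}h_{k+1},h_k\rangle$ in the sense of \Cref{prop: Gamma energy pairing k}. That proposition also ensures $F_k\in L^2(\Dcal_k^*)$, since $h_{k+1}\in L^2(0,T;\Dcal_{k+1})$ by $H\in L^2(0,T;\widetilde\Xbb_z)$.

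\emph{Main obstacle.} The delicate step is justifying the smooth identity. I must check that $\langle A^ku_{k,n},u_{k,n}\rangle_{\Hcal_k}$ is a genuine pairing on the core, so that \Cref{prop: A gap estimate} applies pointwise in time. I must also check that the pairing of $F_{k,n}$ with $u_{k,n}$ agrees with the $\Hcal_k$ inner product of $\partial_tu_{k,n}-A^ku_{k,n}$ with $u_{k,n}$. This holds when $A^ku_{k,n}(\tau)\in\Hcal_k$. The core consists of finite sums of symmetrized smooth, rapidly decaying tensors with $1-\Pi_g$ in each slot. On these $A^k$ is applied slotwise, so $A^ku_{k,n}$ is again smooth and decaying with zero global moments in each slot, using collision invariance and periodicity. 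Hence it lies in $\Hcal_k$, and the dual pairing of $F_{k,n}$ reduces to the inner product by the pivot convention.
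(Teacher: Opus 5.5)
Your proposal is correct and follows the paper's proof: you differentiate $\tfrac12\|u_{k,n}\|_{\Hcal_k}^2$ along the smooth approximants, insert $A^ku_{k,n}+F_{k,n}$, apply the tensorized coercivity \cref{eq:tensor-coercivity}, integrate, and pass to the limit using the three convergences in \cref{eqn: Xz solution}, and the time-weighted version comes from the same computation. Your splitting of the forcing term, and your check that $A^k$ maps the smooth core into $\Hcal_k$ so that the dual pairing reduces to the energy inner product, spell out details the paper leaves implicit.
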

\begin{proof}
For each approximating curve in \cref{eqn: Xz solution}, differentiate its squared norm, insert $A^ku_{k,n}+F_{k,n}$, and apply \cref{eq:tensor-coercivity}. Integrate in time. Strong convergence in $C\Hcal_k$ and $L^2\Dcal_k$, together with forcing convergence in $L^2\Dcal_k^*$, permits passage to every term. Applying the same calculation with a time weight proves the last assertion.
\end{proof}

Because the collision term couples levels $k$ and $k+1$, summation compares adjacent square-root weights. Geometric weights leave a factor $z$ in this comparison, so small activity lets dissipation absorb the entire coupling.

\begin{proposition}\label{prop: energy inequality}
Let $H$ be an $\Xbb_z$-solution in the sense of \Cref{def:Xz-solution}, and let $0<z<z_\ast=\lambda/(\sqrt{2}C_\Gamma)$. Then, distributionally in time,
\begin{equation}\label{eqn: energy inequality}
\frac12\frac{d}{dt}\Ecal_z(H(t))^2
\le-(\lambda-\sqrt{2}C_\Gamma z)\Dcal_z(H(t))^2.
\end{equation}
More precisely, for every $0\le s\le t\le T$,
\begin{equation}\label{eq:integrated-energy}
\frac12\Ecal_z(H(t))^2+(\lambda-\sqrt{2}C_\Gamma z)\int_s^t\Dcal_z(H(\tau))^2\,d\tau
\le\frac12\Ecal_z(H(s))^2.
\end{equation}
The finite-energy representative at every time is determined by the continuous level functions.
\end{proposition}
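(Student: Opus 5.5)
The plan is to multiply the levelwise inequality of \Cref{lem:level-energy} by $z^{-2k}$, sum over $k\le N$, and then let $N\to\infty$. For fixed $N$ and $0\le s\le t\le T$, summing \cref{eq:level-energy} gives
\[
\frac12\sum_{k=1}^N z^{-2k}\|h_k(t)\|_{\Hcal_k}^2+\lambda\int_s^t\sum_{k=1}^N z^{-2k}\|h_k\|_{\Dcal_k}^2\,d\tau
\le\frac12\sum_{k=1}^N z^{-2k}\|h_k(s)\|_{\Hcal_k}^2+\int_s^t\sum_{k=1}^N z^{-2k}\langle\Gamma^{k+1}h_{k+1},h_k\rangle\,d\tau .
\]

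The coupling is handled by \Cref{prop: Gamma energy pairing k}, applied levelwise. The pairing $\langle\Gamma^{k+1}h_{k+1},h_k\rangle$ is the continuous extension. It is well defined for a.e.\ $\tau$ because $H\in L^2(0,T;\widetilde\Xbb_z)$, so $h_{k+1}(\tau)\in\Dcal_{k+1}$ and $h_k(\tau)\in\Dcal_k$. This gives
\[
z^{-2k}|\langle\Gamma^{k+1}h_{k+1},h_k\rangle|\le\sqrt2C_\Gamma\, z\,\bigl(z^{-(k+1)}\|h_{k+1}\|_{\Dcal_{k+1}}\bigr)\bigl(z^{-k}\|h_k\|_{\Dcal_k}\bigr).
\]
Writing $a_k:=z^{-k}\|h_k\|_{\Dcal_k}$, the summed coupling is bounded by $\sqrt2C_\Gamma z\sum_{k\le N}a_{k+1}a_k$. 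Cauchy--Schwarz in $k$ then bounds it by $\sqrt2 C_\Gamma z\,(\sum_{k\le N}a_k^2)^{1/2}(\sum_{k\le N+1}a_k^2)^{1/2}$.

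The main obstacle is the index shift: the truncated sum involves $a_{N+1}$, which the dissipation on the left does not contain. I would not try to absorb it at finite $N$. Instead I bound the coupling by $\sqrt2C_\Gamma z\,\Dcal_z(H(\tau))^2$ outright, using the full infinite sum. This is allowed because $\Dcal_z(H)^2\in L^1(0,T)$, so the bound is a dominating integrable function independent of $N$.

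To pass to the limit, let $N\to\infty$. The left-hand side increases monotonically, so monotone convergence applies to the dissipation integral. On the right, $\Ecal_z(H(s))<\infty$ for every $s$: the level functions are continuous in $\Hcal_k$, and $H\in L^\infty(0,T;\Xbb_z)$. The supremum of the partial sums is lower semicontinuous in time, so the $L^\infty$ bound holds at every time and not only almost everywhere. The coupling integrals converge absolutely by dominated convergence with the bound above. Moving $\sqrt2C_\Gamma z\int_s^t\Dcal_z^2$ to the left yields \cref{eq:integrated-energy}.

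The differential form \cref{eqn: energy inequality} follows by testing the integrated inequality against nonnegative $C^1$ time weights. This uses the weighted version in \Cref{lem:level-energy} and the same summation argument. Finally, the remark that the representative is determined by the level functions holds because $\Ecal_z(H(t))$ is defined pointwise from the continuous curves $h_k(t)$, via the lower semicontinuity just noted. The positivity $\lambda-\sqrt2C_\Gamma z>0$ for $z<z_\ast$ is used only to state the result; the inequality itself holds for any $z$.
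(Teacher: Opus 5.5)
Your proposal is correct and follows the paper's proof essentially step for step. Both multiply the levelwise inequality by $z^{-2k}$, use $z^{-2k}=z\,z^{-(k+1)}z^{-k}$ with the uniform bound of \Cref{prop: Gamma energy pairing k} and Cauchy--Schwarz, dominate the truncated coupling by $\sqrt2C_\Gamma z\,\Dcal_z(H)^2\in L^1(0,T)$ independently of the truncation, and pass to the limit by monotone convergence.

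The remaining differences are minor. You obtain finiteness of $\Ecal_z(H(s))$ at every time from lower semicontinuity of the partial sums together with the $L^\infty$ bound, whereas the paper first runs the argument from $s=0$ using $H(0)\in\Xbb_z$. Also, the distributional form can be read off directly from the integrated inequality for all $s\le t$, so rerunning the weighted version is not needed.
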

\begin{proof}
Write $e_k=\|h_k\|_{\Hcal_k}$ and $d_k=\|h_k\|_{\Dcal_k}$. Multiply \cref{eq:level-energy} by $z^{-2k}$ and sum over $1\le k\le M$. The collision estimate \cref{eqn: Gamma energy pairing} and the identity
\begin{equation}\label{eqn:weight-identity}
z^{-2k}=z\,z^{-(k+1)}z^{-k}
\end{equation}
give the finite-sum bound
\[
\begin{aligned}
\sum_{k=1}^M z^{-2k}
 |\langle\Gamma^{k+1}h_{k+1},h_k\rangle|
&\le\sqrt{2} C_\Gamma z
 \sum_{k=1}^M
 (z^{-(k+1)}d_{k+1})(z^{-k}d_k)\\
&\le\sqrt{2} C_\Gamma z
 \left(\sum_{k=1}^M z^{-2(k+1)}d_{k+1}^2\right)^{1/2}
 \left(\sum_{k=1}^M z^{-2k}d_k^2\right)^{1/2}\\
&\le\sqrt{2} C_\Gamma z\,\Dcal_z(H)^2.
\end{aligned}
\]
The bound is independent of $M$ and belongs to $L^1(0,T)$ by the
solution-class assumption. Consequently the absolute collision
series is integrable, and its partial sums converge in $L^1(0,T)$.
The energy and dissipation sums converge monotonically in the
integrated finite-level inequality. Starting at $s=0$ first gives a
finite energy representative at every $t$; the same argument then
applies to every $s$. This proves \cref{eq:integrated-energy} and its
distributional version.
\end{proof}

\subsection{Fixed-radius decay and uniqueness}\label{sec: main proofs}
\begin{proof}[Proof of \Cref{thm: main thm}]
By exact tensorization,
\begin{equation}\label{eq:Poincare-like}
\Dcal_z(H)^2\ge c_0\sum_{k\ge1}kz^{-2k}\|h_k\|_{\Hcal_k}^2
\ge c_0\Ecal_z(H)^2.
\end{equation}
Consequently \Cref{prop: energy inequality} gives
\[
\frac{d}{dt}\Ecal_z(H(t))^2\le-2\eta_z\Ecal_z(H(t))^2.
\]
Its integrated form and Gronwall's inequality imply \cref{eq:exp-decay-main}. The continuous level representatives and the interval inequality extend the estimate from almost every time to every time.

For uniqueness, let $H^{(1)}$ and $H^{(2)}$ have the same initial data. The exact centered hierarchy is linear, and the core-approximation condition is stable under subtraction. Hence $H^{(1)}-H^{(2)}$ is an $\Xbb_z$-solution with zero initial data. The energy estimate forces every component to vanish. The finite reconstruction \cref{eqn: relative cumulant} then gives uniqueness of the associated density hierarchy. For macroscopically admissible densities, \Cref{lem: marginal of relative cumulants} supplies the global projection constraints.
\end{proof}

\subsection{Shrinking activity radius}\label{subsec:moving-radius-proof}
To obtain the energy at radius $z e^{-\eta_z t}$, we approximate the weights $z^{-2k}e^{2k\eta_z t}$ by replacing $k$ in the exponential with $\min\{k,N\}$. For fixed $N$, the resulting sums are controlled by the known fixed-$z$ bounds; monotone convergence then permits $N\to\infty$.
\begin{proof}[Proof of \Cref{thm:theorem-2}]
Write $e_k=\|h_k\|_{\Hcal_k}$ and $d_k=\|h_k\|_{\Dcal_k}$, and set $\eta=\eta_z=c_0(\lambda-\sqrt{2}C_\Gamma z)$. For a positive integer $N$, introduce the time-dependent weights
\[
w_k^{(N)}(t):=z^{-2k}\exp(2\eta\min\{k,N\}t),\qquad
E_N(t)^2:=\sum_{k\ge1}w_k^{(N)}(t)e_k(t)^2.
\]
For fixed $N$ and $T$, these weights are bounded by $e^{2\eta NT}z^{-2k}$, so both their energy and dissipation sums are already controlled by the fixed-radius solution class. Moreover,
\begin{equation}\label{eq:time-weight-ratios}
\sqrt{\frac{w_k^{(N)}}{w_{k+1}^{(N)}}}\le z,\qquad
\frac{d}{dt}w_k^{(N)}=2\eta\min\{k,N\}w_k^{(N)}\le2\eta k w_k^{(N)}.
\end{equation}
Apply the weighted form of \Cref{lem:level-energy}, first for finitely many levels. Then pass to the infinite sum as in the proof of \Cref{prop: energy inequality}. The collision series is dominated in $L^1$ by $\sqrt{2}C_\Gamma z\sum_kw_k^{(N)}d_k^2$; the differentiated-weight series is dominated by $(2\eta/c_0)\sum_kw_k^{(N)}d_k^2$, using \cref{eq:tensor-coercivity}. We obtain
\begin{align*}
\frac12\frac{d}{dt}E_N(t)^2
&\le-(\lambda-\sqrt{2}C_\Gamma z)\sum_kw_k^{(N)}d_k^2
+\eta\sum_k\min\{k,N\}w_k^{(N)}e_k^2\\
&\le-\left(\lambda-\sqrt{2}C_\Gamma z-\frac\eta{c_0}\right)\sum_kw_k^{(N)}d_k^2=0.
\end{align*}
This is an integrated inequality on $[0,T]$, and $E_N(0)=\Ecal_z(H(0))$. Finally, $w_k^{(N)}(t)$ increases to $(z e^{-\eta t})^{-2k}$ as $N\to\infty$. Monotone convergence gives \cref{eq:moving-radius}. Extracting the $k$th energy summand proves \cref{eq:thm2-main}. Thus the weighted inequalities establish membership at the shrinking radius.
\end{proof}

\subsection{Statistical mixtures}
We verify \Cref{prop:mixture-existence} by first approximating each branch and its tensor powers, then controlling the probability average in both the solution and forcing norms.
\begin{proof}[Proof of \Cref{prop:mixture-existence}]
Fix one branch and write $\widehat f=\widehat f_\omega$. To preserve its equation under smooth approximation, we need continuity of $A$ in the forcing norm. The identity
$L\widehat f=\Gamma[\mu^{1/2}\otimes\widehat f+\widehat f\otimes\mu^{1/2}]$ and \Cref{prop: Gamma energy pairing} give $L:\Dcal\to\Dcal^*$ continuously. Since $v\cdot\nabla_x:\mathcal V\to\Hcal$ is bounded, $A:\mathcal V\to\Dcal^*$ is continuous.

Velocity cutoffs and mollification, followed by $1-\Pi_g$, give smooth-core curves $\widehat f^{(n)}\to\widehat f$ in $C\mathcal V$, with $\partial_t\widehat f^{(n)}\to\partial_t\widehat f$ in $C\Hcal$. Their residuals converge to $\Gamma[\widehat f\otimes\widehat f]$ in $L^2\Dcal^*$. Taking tensor powers gives the required approximation at each level: by the product rule, the residual of $(\widehat f^{(n)})^{\otimes k}$ is a sum with one factor $\partial_t\widehat f^{(n)}-A\widehat f^{(n)}$ and $k-1$ factors $\widehat f^{(n)}$.

To sum these paths over levels and average over branches, we need bounds uniform in $\omega$. Coercivity and the pair estimate give, with $a=\|\widehat f\|_{\Hcal}$ and $d=\|\widehat f\|_{\Dcal}$,
\[
\frac12\frac{d}{dt}a^2\le-(\lambda-\sqrt{2}C_\Gamma a)d^2,
\qquad a(t)\le\rho,\qquad
\int_0^\infty d(t)^2\,dt\le\frac{\rho^2}{2(\lambda-\sqrt{2}C_\Gamma\rho)}.
\]
The last two conclusions follow by continuity from $a(0)\le\rho<z_\ast$. Exact tensorization gives
\[
\|\widehat f^{\otimes k}\|_{\Hcal_k}=a^k,\qquad
\|\widehat f^{\otimes k}\|_{\Dcal_k}^2=ka^{2k-2}d^2.
\]
Jensen's inequality and the geometric series now give \cref{eq:mixture-activity-bounds}.

For each fixed $k,T$, these bounds and \Cref{prop: Gamma energy pairing k} make the path and forcing jointly integrable:
\[
\bigl(\widehat f_\omega^{\otimes k},\Gamma^{k+1}\widehat f_\omega^{\otimes(k+1)}\bigr)
\quad\hbox{is Bochner integrable in}\quad
\bigl(C\Hcal_k\cap L^2\Dcal_k\bigr)\times L^2\Dcal_k^*,
\]
where the time interval is $[0,T]$. Approximate this integral in the product-space norm by finite convex combinations of its values, then approximate each selected tensor path by the smooth-core curves above. A diagonal choice proves \cref{eqn: Xz solution} for the averaged coefficient $h_k$.

Bochner--Fubini and continuity of the collision map justify averaging the distributional equations. Positivity, consistency, and macroscopic admissibility follow from the one-particle identities and common conserved moments. The finite centered inversion gives the second formula in \cref{eq:mixture-hierarchy}.
\end{proof}

\section{Consequences for densities and observables}\label{sec:consequences}
The centered estimates describe decay separately at each order. Reconstruction combines these orders within a density, while observable generating functions collect them across the whole hierarchy; we now quantify both consequences.

\subsection{Reconstruction of the densities}\label{subsec:density-reconstruction}
Although $h_k$ decays at rate $k\eta_z$, the reconstruction of $f_k-\mu_k$ contains every centered order $1\le p\le k$. The density bound therefore has the slowest rate $\eta_z$, with constants accounting for the Maxwellian factors and the reversal of the rescaling.

\begin{corollary}\label{cor:fk-to-muk-kdep}
Let $F(t)$ be a macroscopically admissible density-hierarchy solution whose rescaled centered coefficients satisfy the hypotheses of \Cref{thm: main thm}, and recall $\eta_z=c_0(\lambda-\sqrt{2}C_\Gamma z)>0$. Write $\mathcal S_k=(H^2(\Tbb^3\times\Rbb^3))^{\otimes k}$ for the full mixed Sobolev tensor space. Define the one-particle constants
\[
A_\mu:=\|\mu\|_{H^2},\qquad
C_\mu:=\sup_{0\ne h\in\Hcal}\frac{\|\mu^{1/2}h\|_{H^2}}{\|h\|_{\Hcal}},
\]
where $\Hcal$ is the one-particle space in \cref{eq:one-particle-spaces}. Thus $\|\mu^{1/2}h\|_{H^2}\le C_\mu\|h\|_{\Hcal}$; both constants are finite. For every $k\ge1$,
\begin{align}
\|f_k(t)-\mu_k\|_{\mathcal S_k}
&\le \sum_{p=1}^k\binom{k}{p}A_\mu^{k-p}C_\mu^p z^p e^{-p\eta_z t}\Ecal_z(H(0))\label{eq:cor-kwise-decay-hat}\\
&=\bigl[(A_\mu+C_\mu z e^{-\eta_z t})^k-A_\mu^k\bigr]\Ecal_z(H(0))\nonumber\\
&\le e^{-\eta_z t}\bigl[(A_\mu+C_\mu z)^k-A_\mu^k\bigr]\Ecal_z(H(0)).\label{eq:cor-kwise-single-rate}
\end{align}
\end{corollary}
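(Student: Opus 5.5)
The plan is to start from the forward decomposition \cref{eqn: relative cumulant} with $g_p=\mu_p^{1/2}h_p$:
\[
f_k-\mu_k=\sum_{p=1}^k\binom kp\Sym\bigl((\mu_p^{1/2}h_p)\otimes\mu_{k-p}\bigr),
\]
where the $p=0$ term is exactly $\mu_k$. Since $\Sym$ averages permutations, and permutations act isometrically on $\mathcal S_k$ (whose norm is permutation invariant), the triangle inequality reduces the claim to bounding each $\|(\mu_p^{1/2}h_p)\otimes\mu_{k-p}\|_{\mathcal S_k}$.

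First, because $\mathcal S_k$ is a Hilbert tensor product, the norm of an elementary tensor factorizes:
\[
\|(\mu_p^{1/2}h_p)\otimes\mu_{k-p}\|_{\mathcal S_k}=\|\mu_p^{1/2}h_p\|_{\mathcal S_p}\,A_\mu^{k-p}.
\]
Second, I need $\|\mu_p^{1/2}h_p\|_{\mathcal S_p}\le C_\mu^p\|h_p\|_{\Hcal_p}$. The map $M:h\mapsto\mu^{1/2}h$ is bounded from $\Hcal$ into $H^2$ with norm $C_\mu$. Its $p$-fold tensor power $M^{\otimes p}$ is therefore bounded from $\Hcal^{\otimes p}$ to $(H^2)^{\otimes p}$ with norm $C_\mu^p$. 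I will prove this as in \Cref{prop: A gap estimate}: replace one slot at a time by expanding the spectator coordinates in an orthonormal basis and applying Parseval. Each replacement costs one factor $C_\mu$, and the spectator slots remain in their current Hilbert spaces.

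This step is the main subtlety. One must check that $M^{\otimes p}$ applied to $h_p\in\Hcal_p$ agrees with pointwise multiplication by $\mu_p^{1/2}$. This is true on finite sums of elementary tensors and extends by density and continuity, with distributional derivatives as in \Cref{lem:H2-vs-Hcal2}. Finiteness of $C_\mu$ is also needed: derivatives of $\mu^{1/2}$ are bounded, so $\|\mu^{1/2}h\|_{H^2}\lesssim\|h\|_{H^2}$, and \Cref{lem:H2-vs-Hcal2} with $k=1$ then bounds this by a multiple of $\|h\|_{\Hcal}$.

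Third, I insert \cref{eq:thm2-main} from \Cref{thm:theorem-2}, $\|h_p(t)\|_{\Hcal_p}\le z^pe^{-p\eta_zt}\Ecal_z(H(0))$. Its hypotheses apply because \Cref{lem: marginal of relative cumulants} places each $h_p$ in the constrained spaces. Summing over $p$ gives \cref{eq:cor-kwise-decay-hat}, and the binomial theorem gives the middle identity. For the final inequality, I use $e^{-p\eta_zt}\le e^{-\eta_zt}$ for every $p\ge1$ and $t\ge0$. Factoring out $e^{-\eta_zt}$ and re-summing by the binomial theorem yields $(A_\mu+C_\mu z)^k-A_\mu^k$, which is \cref{eq:cor-kwise-single-rate}.
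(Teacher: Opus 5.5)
Your proposal is correct and follows the paper's proof step for step. The chain is the same: the forward identity, with the $p=0$ term equal to $\mu_k$; non-expansiveness of $\Sym$ on $\mathcal S_k$; factorization of the Hilbert tensor norm to get $A_\mu^{k-p}\|\mu_p^{1/2}h_p\|_{\mathcal S_p}\le A_\mu^{k-p}C_\mu^p\|h_p\|_{\Hcal_p}$; then \Cref{thm:theorem-2} and the binomial sum with $e^{-p\eta_z t}\le e^{-\eta_z t}$. Your slot-by-slot proof that $M^{\otimes p}$ has norm $C_\mu^p$ and agrees with multiplication by $\mu_p^{1/2}$ spells out a step the paper states in one line. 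The appeal to \Cref{lem: marginal of relative cumulants} is harmless but unnecessary, since membership in $\Hcal_p$ is already part of the $\Xbb_z$-solution hypothesis.
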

\begin{proof}
Use the exact identity
\[
f_k-\mu_k=\sum_{p=1}^k\binom{k}{p}\Sym(g_p\otimes\mu_{k-p}),\qquad g_p=\mu_p^{1/2}h_p.
\]
Permutations are isometries on $\mathcal S_k$, so $\Sym$ is non-expansive: its operator norm is at most one. The Hilbert tensor norm gives
\[
\|\Sym(g_p\otimes\mu_{k-p})\|_{\mathcal S_k}
\le A_\mu^{k-p}\|g_p\|_{\mathcal S_p}
\le A_\mu^{k-p}C_\mu^p\|h_p\|_{\Hcal_p}.
\]
Apply \Cref{thm:theorem-2}, then sum the binomial expansion. Since $p\ge1$, replacing $e^{-p\eta_z t}$ by $e^{-\eta_z t}$ proves the last inequality.
\end{proof}

\subsection{Dynamical partition function}\label{sec:partition-function}
To collect observable moments across particle levels, we use the exponential correlation-series convention of the Bogoliubov generating functional; see \cite[Section~3, Proposition~9 and Remark~10]{KKO2006}. We call the resulting time-dependent series the dynamical partition function. Here the coefficients are the hierarchy densities and $\beta$ is a source parameter. The factorial normalization is separate from the geometric weights controlling the hierarchy energy.

\begin{definition}\label{def:partition-function}
For $\beta\in\Rbb$ and a measurable observable $\phi:\Tbb^3\times\Rbb^3\to\Rbb$ with
$C_\phi:=\|\phi\mu^{1/2}\|_{L^2}<\infty$, define, whenever the series is absolutely convergent,
\begin{equation}\label{eq:Z-def}
\mathcal Z(\beta,\mu,\phi;F)
:=\sum_{k=0}^\infty\frac{\beta^k}{k!}\langle\phi^{\otimes k},f_k\rangle,
\qquad f_0=1.
\end{equation}
\end{definition}

Inserting the centered decomposition into the defining series for $\mathcal Z$ separates the Maxwellian contribution from the centered moments. Absolute convergence, proved below, justifies the rearrangement and reduces observable convergence to estimating the remaining factor.

\begin{proposition}\label{prop:Z-cumulant-factorization}
Suppose $F$ is reconstructed by \cref{eqn: relative cumulant} from $H\in\Xbb_z$, with $g_k=\mu_k^{1/2}h_k$ and $g_0=1$. Let $\phi:\Tbb^3\times\Rbb^3\to\Rbb$ be measurable with $C_\phi:=\|\phi\mu^{1/2}\|_{L^2}<\infty$. Then the series below converge absolutely for every $\beta\in\Rbb$, and
\begin{equation}\label{eq:Z-factorization}
\mathcal Z(\beta,\mu,\phi;F)
=e^{\beta\langle\phi,\mu\rangle}\mathcal C(\beta,\phi;G),\qquad
\mathcal C(\beta,\phi;G):=\sum_{p=0}^\infty\frac{\beta^p}{p!}\langle\phi^{\otimes p},g_p\rangle.
\end{equation}
\end{proposition}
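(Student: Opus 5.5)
The plan is to bound each centered moment, prove absolute convergence of the centered series, and then rearrange the full double sum. The factorization itself is just the Cauchy product of exponential series, so the only real work is justifying the rearrangement.

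\textbf{Step 1: bound each centered moment.} By the subset form of \Cref{lem:Sym-subset-representation}, for each symmetric $g_p$ the pairing factorizes:
\[
\langle\phi^{\otimes k},\Sym(g_p\otimes\mu_{k-p})\rangle=\langle\phi^{\otimes p},g_p\rangle\,\langle\phi,\mu\rangle^{k-p}.
\]
This is because $\phi^{\otimes k}$ is symmetric and every subset term integrates to the same product. Writing $g_p=\mu_p^{1/2}h_p$, Cauchy--Schwarz in $L^2$ gives
\[
|\langle\phi^{\otimes p},g_p\rangle|=|\langle(\phi\mu^{1/2})^{\otimes p},h_p\rangle_{L^2}|\le C_\phi^p\|h_p\|_{L^2}.
\]
The normalization $\|u\|_{L^2}\le\|u\|_{\mathscr H}$ tensorizes exactly, since $L^2$ and $\mathscr H$ are both Hilbert tensor products and the comparison holds slotwise, so $\|h_p\|_{L^2}\le\|h_p\|_{\Hcal_p}\le z^p\Ecal_z(H)$. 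Similarly $|\langle\phi,\mu\rangle|\le C_\phi\|\mu^{1/2}\|_{L^2}=C_\phi$.

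\textbf{Step 2: absolute convergence of $\mathcal C$.} From Step 1,
\[
\sum_p\frac{|\beta|^p}{p!}|\langle\phi^{\otimes p},g_p\rangle|\le1+\Ecal_z(H)\,e^{|\beta|C_\phi z}<\infty.
\]
The point is that only a uniform bound $\|h_p\|_{\Hcal_p}\le z^p\Ecal_z$ is needed, and the factorial then beats any geometric growth.

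\textbf{Step 3: rearrange and factor.} Insert \cref{eqn: relative cumulant} into \cref{eq:Z-def}. Using Step 1 and $\binom kp/k!=1/(p!(k-p)!)$,
\[
\frac{\beta^k}{k!}\langle\phi^{\otimes k},f_k\rangle=\sum_{p=0}^k\frac{\beta^p}{p!}\langle\phi^{\otimes p},g_p\rangle\cdot\frac{(\beta\langle\phi,\mu\rangle)^{k-p}}{(k-p)!}.
\]
The double series of absolute values is dominated by the product of the convergent series from Step 2 and $e^{|\beta|C_\phi}$. Therefore Fubini/Tonelli for series justifies summing over $k$ and $p$ in either order. This gives absolute convergence of \cref{eq:Z-def} (the triangle inequality bounds each $|\cdot|$ at level $k$ by the inner sum) and the Cauchy-product identity \cref{eq:Z-factorization}.

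\textbf{Main obstacle.} The only delicate point is Step 1's factorization, which needs the pairings to be well defined when $\phi$ is merely measurable with $\phi\mu^{1/2}\in L^2$. The rewriting $\langle\phi^{\otimes k},\mu_{k-p}\otimes g_p\rangle$ as an $L^2$ pairing of $(\phi\mu^{1/2})^{\otimes k}$ against $\mu_{k-p}^{1/2}\otimes h_p\in L^2$ makes every integral absolutely convergent. Fubini then splits off the Maxwellian slots, each contributing $\int\phi\mu\,dx\,dv=\langle\phi,\mu\rangle$.
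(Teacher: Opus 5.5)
Your proposal is correct and follows essentially the same route as the paper. It uses Cauchy--Schwarz with $\|h_p\|_{L^2}\le\|h_p\|_{\Hcal_p}\le z^p\Ecal_z(H)$, the symmetry--Fubini identity $\langle\phi^{\otimes k},\Sym(g_p\otimes\mu_{k-p})\rangle=\langle\phi^{\otimes p},g_p\rangle\langle\phi,\mu\rangle^{k-p}$, and a rearrangement of the absolutely convergent double series via $\binom{k}{p}/k!=1/(p!(k-p)!)$, with only slightly cruder (but sufficient) domination constants than the paper's.
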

\begin{proof}
Our one-particle normalization gives $\|h_p\|_{L^2}\le\|h_p\|_{\Hcal_p}$, hence
\[
|\langle\phi^{\otimes p},g_p\rangle|
\le C_\phi^p\|h_p\|_{\Hcal_p}
\le (C_\phi z)^p\Ecal_z(H),\qquad p\ge1.
\]
The same bound holds with $|g_p|$ and $|\phi|$ in the integrand. Since $\langle|\phi|,\mu\rangle\le C_\phi$, the absolute double series obtained from the reconstruction is bounded by
$e^{|\beta|\langle|\phi|,\mu\rangle}[1+\Ecal_z(H)(e^{|\beta|C_\phi z}-1)]$.
Moreover, symmetry and Fubini give
\begin{equation}\label{eq:phi-sym-identity}
\langle\phi^{\otimes k},\Sym(g_p\otimes\mu_{k-p})\rangle
=\langle\phi^{\otimes p},g_p\rangle\langle\phi,\mu\rangle^{k-p}.
\end{equation}
Insert this into \cref{eq:Z-def}, use $\binom{k}{p}/k!=1/(p!(k-p)!)$, and rearrange the absolutely convergent double series.
\end{proof}

The factorization reduces convergence of $\mathcal Z$ to decay of the centered factor toward one. Summing the levelwise bounds gives a quantitative estimate for every observable in the stated weighted space.

\begin{theorem}\label{thm:Z-convergence}
Let $H$ satisfy the hypotheses of \Cref{thm: main thm}, and let $F(t)$ be reconstructed from $H(t)$ by \cref{eqn: relative cumulant}. For every measurable $\phi:\Tbb^3\times\Rbb^3\to\Rbb$ with $C_\phi:=\|\phi\mu^{1/2}\|_{L^2}<\infty$ and every $\beta\in\Rbb$, put $q:=|\beta|C_\phi z$. Then
\begin{equation}\label{eq:Z-limit}
|\mathcal Z(\beta,\mu,\phi;F(t))-e^{\beta\langle\phi,\mu\rangle}|
\le e^{|\beta|\langle|\phi|,\mu\rangle}(e^q-1)e^{-\eta_z t}\Ecal_z(H(0)).
\end{equation}
\end{theorem}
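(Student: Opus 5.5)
The plan is to reduce the estimate to the factorization of \Cref{prop:Z-cumulant-factorization} and then apply the levelwise decay of \Cref{thm:theorem-2}. At each time $t$, the hypotheses of \Cref{thm: main thm} give $H(t)\in\Xbb_z$, since $\Ecal_z(H(t))\le\Ecal_z(H(0))$. Hence \Cref{prop:Z-cumulant-factorization} applies at every $t$. It yields absolute convergence and
\[
\mathcal Z(\beta,\mu,\phi;F(t))-e^{\beta\langle\phi,\mu\rangle}
=e^{\beta\langle\phi,\mu\rangle}\bigl(\mathcal C(\beta,\phi;G(t))-1\bigr)
=e^{\beta\langle\phi,\mu\rangle}\sum_{p\ge1}\frac{\beta^p}{p!}\langle\phi^{\otimes p},g_p(t)\rangle,
\]
where the $p=0$ term equals one because $g_0=1$. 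The prefactor is bounded by $|e^{\beta\langle\phi,\mu\rangle}|\le e^{|\beta|\langle|\phi|,\mu\rangle}$, and $\langle|\phi|,\mu\rangle\le C_\phi$ is finite by Cauchy--Schwarz with $\|\mu^{1/2}\|_{L^2}=1$.

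Next I would bound each centered moment. Writing $g_p=\mu_p^{1/2}h_p$, the pairing equals $\langle(\phi\mu^{1/2})^{\otimes p},h_p\rangle_{L^2}$. Cauchy--Schwarz on the tensor product then gives the bound $C_\phi^p\|h_p\|_{L^2}$. The normalization $\|u\|_{L^2}\le\|u\|_{\mathscr H}$ tensorizes exactly, as in the basis-expansion argument of \Cref{lem:H2-vs-Hcal2}, so that $\|h_p\|_{L^2}\le\|h_p\|_{\Hcal_p}$. Inserting \cref{eq:thm2-main} gives
\[
|\langle\phi^{\otimes p},g_p(t)\rangle|\le (C_\phi z)^p e^{-p\eta_z t}\Ecal_z(H(0)).
\]

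Summing over $p\ge1$ gives
\[
\sum_{p\ge1}\frac{(|\beta|C_\phi z e^{-\eta_z t})^p}{p!}\,\Ecal_z(H(0))=\bigl(e^{qe^{-\eta_z t}}-1\bigr)\Ecal_z(H(0)).
\]
For $s=e^{-\eta_z t}\in(0,1]$, the elementary inequality $e^{qs}-1\le s(e^q-1)$ holds because $s\mapsto e^{qs}-1$ is convex and vanishes at $s=0$. Equivalently, one can bound termwise $s^p\le s$ for $p\ge1$. This produces exactly \cref{eq:Z-limit}.

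I expect the only delicate point to be the legitimacy of the moment bound at every time $t$, rather than almost every $t$. The continuity $h_k\in C([0,T];\Hcal_k)$ together with the statement of \Cref{thm:theorem-2} for all $t$ handles this. On $[0,\infty)$, the estimate holds on every finite interval, which suffices.
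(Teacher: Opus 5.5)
Your proposal is correct and follows essentially the same route as the paper. You factorize via \Cref{prop:Z-cumulant-factorization}, bound each centered moment $|\langle\phi^{\otimes p},g_p(t)\rangle|\le(C_\phi z)^p e^{-p\eta_z t}\Ecal_z(H(0))$ by Cauchy--Schwarz and \Cref{thm:theorem-2}, sum the exponential series, and finish with $s^p\le s$ and the bound $e^{|\beta|\langle|\phi|,\mu\rangle}$ on the Maxwellian prefactor.
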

\begin{proof}
Cauchy--Schwarz and \cref{eq:thm2-main} bound each centered coefficient by
\[
|\langle\phi^{\otimes p},g_p(t)\rangle|
\le(C_\phi z)^p e^{-p\eta_z t}\Ecal_z(H(0)),\qquad p\ge1.
\]
Consequently,
\begin{align}
|\mathcal C(\beta,\phi;G(t))-1|
&\le\sum_{p\ge1}\frac{q^p}{p!}e^{-p\eta_z t}\Ecal_z(H(0))
=\bigl(e^{q e^{-\eta_z t}}-1\bigr)\Ecal_z(H(0))\label{eq:C-series-bound}\\
&\le(e^q-1)e^{-\eta_z t}\Ecal_z(H(0)).\label{eq:C-coarse-bound}
\end{align}
For the last step, set $s=e^{-\eta_z t}\in[0,1]$. The exponential series has nonnegative coefficients, and $s^p\le s$ for $p\ge1$, so
\[
e^{qs}-1=\sum_{p\ge1}\frac{q^p s^p}{p!}
\le s\sum_{p\ge1}\frac{q^p}{p!}=s(e^q-1).
\]
Multiplying \cref{eq:C-coarse-bound} by the Maxwellian factor in \cref{eq:Z-factorization} proves \cref{eq:Z-limit}.
\end{proof}

The same estimate controls the logarithmic generating function. For real $\beta,\phi$, once the bound on $|\mathcal C-1|$ is below $1/2$, one has $\mathcal C>0$ and
\[
|\log\mathcal Z-\beta\langle\phi,\mu\rangle|
=|\log\mathcal C|\le2|\mathcal C-1|.
\]
Thus the corresponding logarithmic generating function also approaches its equilibrium value at rate $\eta_z$ whenever the solution exists globally.

\subsection{Connected observable correlations}\label{subsec:stat-mech-interp}\label{rem:gluing}
The logarithm of the generating function separates connected correlations between distinct particles from products of lower-order moments. Under the hypotheses of \Cref{thm:Z-convergence}, the factorial coefficient bounds in its proof give locally uniform convergence of the series and its derivatives in $\beta$. Since $\mathcal Z(0,\mu,\phi;F(t))=1$, its logarithm is real analytic near zero. Define
\[
\kappa_1(t):=\left.\partial_\beta\log\mathcal Z(\beta,\mu,\phi;F(t))\right|_{\beta=0},\qquad
\kappa_2(t):=\left.\partial_\beta^2\log\mathcal Z(\beta,\mu,\phi;F(t))\right|_{\beta=0}.
\]
Termwise differentiation identifies the first two classical joint cumulants in the partition-lattice convention of Speed \cite{Speed1983}: the mean and the connected correlation between two distinct particles.
\begin{equation}\label{eq:connected-observable-identities}
\kappa_1(t)=\langle\phi,f_1(t)\rangle,\qquad
\kappa_2(t)=\langle\phi^{\otimes2},f_2(t)\rangle-\langle\phi,f_1(t)\rangle^2.
\end{equation}
Writing $E=\Ecal_z(H(0))$ and $a_\phi(t)=C_\phi z e^{-\eta_z t}$, we obtain
\begin{equation}\label{eq:connected-observable-decay}
|\kappa_1(t)-\langle\phi,\mu\rangle|\le a_\phi(t)E,
\qquad |\kappa_2(t)|\le a_\phi(t)^2(E+E^2).
\end{equation}
Indeed, if $b_j=\langle\phi^{\otimes j},g_j(t)\rangle$, the identity
$\log\mathcal Z=\beta\langle\phi,\mu\rangle+\log\mathcal C$
gives $\kappa_1-\langle\phi,\mu\rangle=b_1$ and $\kappa_2=b_2-b_1^2$; the bound $|b_j|\le a_\phi^j E$ proves both estimates.

For an admissible density hierarchy, take $\phi=\mathbf1_A$ for a measurable region $A\subset\Tbb^3\times\Rbb^3$. Then $\kappa_2(t)$ is the excess probability that two distinct particles both lie in $A$ over the product of their one-particle probabilities. Its decay at rate $2\eta_z$ quantifies loss of this correlation. For the mixtures of \Cref{prop:mixture-existence}, absolute convergence gives $\mathcal Z=\mathbb E_\pi\exp(\beta\langle\phi,f_\omega(t)\rangle)$, hence $\kappa_2=\operatorname{Var}_\pi\langle\phi,f_\omega(t)\rangle$: the variance across conditional one-particle means. In the same representation, $h_k=\mathbb E_\pi(\widehat f_\omega^{\otimes k})$ are tensor moments centered at the fixed Maxwellian.

\appendix
\section{Elementary combinatorial identities}\label{sec: Combinatorial Lemmas}
The centered identities in \Cref{subsec:centered-decomposition} follow from two elementary facts: outer symmetrization absorbs an inner average, and an alternating binomial sum selects a single coefficient.

\subsection{Symmetrization and binomial inversion}
When we substitute one centered expansion into another, the nested averages simplify because composing a fixed permutation with every permutation leaves the full average unchanged.

\begin{lemma}\label{lem: sym associative}
    Let $a,b,c \geq 0$ be integers, and let $h_a$ be a symmetric function of $a$ particle coordinates. Then
    \begin{equation*}
        \Sym\Bigl( \Sym\bigl( h_a \otimes \mu_{b} \bigr) \otimes \mu_{c}\Bigr) = \Sym\bigl( h_a \otimes \mu_{b + c}\bigr).
    \end{equation*}
\end{lemma}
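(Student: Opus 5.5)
The identity will follow from writing both sides as explicit averages over permutations and absorbing the inner average into the outer one. Set $n=a+b+c$ and $k_0=a+b$. The function $F:=h_a\otimes\mu_b\otimes\mu_c=h_a\otimes\mu_{b+c}$ is a function of $n$ coordinates, since $\mu_b\otimes\mu_c=\mu_{b+c}$ as a product of Maxwellians in the last $b+c$ velocity slots. We regard $S_{k_0}$ as the subgroup of $S_n$ that fixes the labels $k_0+1,\dots,n$.

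\textbf{Step 1.} Expand the inner symmetrization:
\[
\Sym(h_a\otimes\mu_b)\otimes\mu_c=\frac{1}{k_0!}\sum_{\tau\in S_{k_0}}(\tau(h_a\otimes\mu_b))\otimes\mu_c=\frac{1}{k_0!}\sum_{\tau\in S_{k_0}}\tau F.
\]
The second equality holds because $\tau$ acts only on the first $k_0$ coordinates, so it commutes with tensoring by $\mu_c$ in the remaining slots. This is immediate from the permutation action \cref{eqn: permutation action}.

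\textbf{Step 2.} Apply the outer symmetrization:
\[
\frac{1}{n!\,k_0!}\sum_{\sigma\in S_n}\sum_{\tau\in S_{k_0}}\sigma\tau F.
\]
Here we use that the action is a group action, $(\sigma\tau)F=\sigma(\tau F)$, which can be checked directly from \cref{eqn: permutation action}.

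\textbf{Step 3.} For each fixed $\tau$, the map $\sigma\mapsto\sigma\tau$ is a bijection of $S_n$, so $\sum_\sigma\sigma\tau F=\sum_{\sigma}\sigma F$. The $k_0!$ terms in $\tau$ are therefore identical and cancel the prefactor $1/k_0!$. What remains is $\frac1{n!}\sum_\sigma\sigma F=\Sym(h_a\otimes\mu_{b+c})$.

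The symmetry of $h_a$ is not actually needed for this argument. The only point requiring care is Step 2, namely checking that the action in \cref{eqn: permutation action} composes correctly rather than as an anti-action. In fact either convention gives a bijection $\sigma\mapsto\sigma\tau$ or $\sigma\mapsto\tau\sigma$ of $S_n$, so reindexing works in both cases. The edge cases $a=0$ or $b=c=0$ need no separate treatment, since the same computation covers them with $h_0=1$.
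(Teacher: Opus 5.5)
Your proof is correct and is essentially the paper's argument: both expand the two symmetrizations, regard $\tau\in S_{a+b}$ as an element of $S_{a+b+c}$ fixing the last $c$ labels, reindex the outer sum by a bijection of $S_{a+b+c}$, and cancel the factor $1/(a+b)!$. One small correction is that with the literal convention \cref{eqn: permutation action} the action is an anti-action, $\sigma(\tau F)=(\tau\sigma)F$, so the identity asserted in Step 2 is false as written; your closing remark (reindex by $\sigma\mapsto\tau\sigma$ instead) repairs this, and the paper's own proof passes over the same point harmlessly.
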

\begin{proof}[Proof of \Cref{lem: sym associative}]
Set $k=a+b+c$ and $F=h_a\otimes\mu_b\otimes\mu_c$. Extend each $\tau\in S_{a+b}$ to $S_k$ by leaving the last $c$ labels fixed. For fixed $\tau$, the map $\sigma\mapsto\sigma\circ(\tau\oplus\mathrm{Id}_c)$ is a bijection of $S_k$. Hence the nested average equals
\[
\frac{1}{(a+b)!k!}\sum_{\tau\in S_{a+b}}\sum_{\sigma\in S_k}
\bigl(\sigma\circ(\tau\oplus\mathrm{Id}_c)\bigr)F
=\frac1{k!}\sum_{\rho\in S_k}\rho F=\Sym F.
\]
Combining the Maxwellian factors gives $\Sym F=\Sym(h_a\otimes\mu_{b+c})$.
\end{proof}
After the nested symmetrizations are removed, the inversion reduces to an alternating binomial sum. Its cancellation isolates the coefficient at the desired level.

\begin{lemma}\label{lem: binomial inversion}
    Let $k\geq 1$ be a fixed integer, and let $a_0, \dots, a_k \in \Rbb$. Then
    \begin{equation*}
        \sum_{0\leq \ell \leq p \leq k} (-1)^{k-p} \binom{k}{p} \binom{p}{\ell}a_\ell = a_k.
    \end{equation*}
\end{lemma}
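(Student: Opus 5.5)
The plan is to exchange the order of summation and then use the vanishing of alternating binomial sums. For fixed $\ell$, the index $p$ ranges over $\ell\le p\le k$, so the left-hand side equals
\[
\sum_{\ell=0}^k a_\ell\sum_{p=\ell}^k(-1)^{k-p}\binom{k}{p}\binom{p}{\ell}.
\]
It therefore suffices to show that the inner coefficient equals $1$ when $\ell=k$ and $0$ when $\ell<k$.

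Next apply the subset-of-a-subset identity $\binom{k}{p}\binom{p}{\ell}=\binom{k}{\ell}\binom{k-\ell}{p-\ell}$, and substitute $m=p-\ell$. The inner sum becomes
\[
\binom{k}{\ell}\sum_{m=0}^{k-\ell}(-1)^{k-\ell-m}\binom{k-\ell}{m}=\binom{k}{\ell}(1-1)^{k-\ell}
\]
by the binomial theorem. This vanishes for $\ell<k$. For $\ell=k$ it equals $1$, using the convention $0^0=1$; equivalently, the sum has the single term $p=k$.

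No real obstacle is expected. The only point needing care is the degenerate case $\ell=k$, where the binomial-theorem expression reads $0^0$. This is handled by noting directly that the sum then consists of the single term $p=\ell=k$, which contributes $a_k$.
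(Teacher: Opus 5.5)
Your proof is correct and takes the same route as the paper's. Both exchange the order of summation, apply $\binom{k}{p}\binom{p}{\ell}=\binom{k}{\ell}\binom{k-\ell}{p-\ell}$, shift the index, and use the binomial theorem to get $(1-1)^{k-\ell}=\delta_{\ell,k}$; your explicit treatment of the $\ell=k$ case is a small added clarification.
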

\begin{proof}[Proof of \Cref{lem: binomial inversion}]
Reverse the order of summation and use
$\binom{k}{p}\binom{p}{\ell}=\binom{k}{\ell}\binom{k-\ell}{p-\ell}$.
With $r=p-\ell$, the sum becomes
\[
\sum_{\ell=0}^k a_\ell\binom{k}{\ell}
\sum_{r=0}^{k-\ell}(-1)^{k-\ell-r}\binom{k-\ell}{r}.
\]
The binomial theorem makes the inner sum zero when $\ell<k$ and one when $\ell=k$, leaving $a_k$.
\end{proof}

\subsection{Proofs of the centered identities}
\begin{proof}[Proof of \Cref{prop: Mobius inversion}]
We verify that substituting the forward expansion into the proposed inverse recovers $g_k$. Write
\[
\tilde g_k := \sum_{p=0}^k (-1)^{k-p}\binom{k}{p}\Sym\bigl(f_p\otimes\mu_{k-p}\bigr).
\]
Substitute the forward map \cref{eqn: relative cumulant}, $f_p = \sum_{q=0}^p\binom{p}{q}\Sym(g_q\otimes\mu_{p-q})$:
\[
\tilde g_k
= \sum_{p=0}^k (-1)^{k-p}\binom{k}{p}
  \Sym\!\Bigl(\sum_{q=0}^p\binom{p}{q}\Sym\bigl(g_q\otimes\mu_{p-q}\bigr)\otimes\mu_{k-p}\Bigr).
\]
By linearity of $\Sym$ and \Cref{lem: sym associative},
$\Sym\bigl(\Sym(g_q\otimes\mu_{p-q})\otimes\mu_{k-p}\bigr) = \Sym(g_q\otimes\mu_{k-q})$.
Rearranging the double sum gives
\[
\tilde g_k
= \sum_{q=0}^k \Sym\bigl(g_q\otimes\mu_{k-q}\bigr)
  \underbrace{\sum_{p=q}^k (-1)^{k-p}\binom{k}{p}\binom{p}{q}}_{=\,\delta_{q,k}},
\]
where the inner sum equals $\delta_{q,k}$ by \Cref{lem: binomial inversion}, applied pointwise
(with $a_\ell = \Sym(g_\ell\otimes\mu_{k-\ell})$).
Hence $\tilde g_k = \Sym(g_k\otimes\mu_0) = g_k$,
where $\mu_0 := 1$ (the zero-particle Maxwellian is the constant function~$1$).
\end{proof}

\begin{proof}[Proof of \Cref{lem:Sym-subset-representation}]
Group the permutation average by the subset $S$ occupying the first $p$ slots. Since $h_p$ and the Maxwellian product are symmetric in their own slots, each subset contributes the same function for its $p!(k-p)!$ orderings. Its coefficient in the average is therefore
\[
\frac{p!(k-p)!}{k!}=\binom{k}{p}^{-1}.
\]
Summing over all $p$-element subsets gives \cref{eq:Sym-subset}.
\end{proof}

\section*{Acknowledgments}
I thank Cl{\'e}ment Mouhot and Philip~J. Morrison for discussions at SLMath, Tai-Ping Liu for clarifying the near-vacuum setting of my earlier joint work \cite{AMPT2025}, Jonathan Luk for early discussions, and Orin Yakir for pointing out \cite{Speed1983,Speed1986}. I am grateful to Stanford University for supporting my Szeg\H{o} Assistant Professorship (2024--2026), and to SLMath for hosting my postdoctoral visit during the program \emph{Kinetic Theory: Novel Statistical, Stochastic and Analytical Methods}.

I used AI tools developed by Anthropic and OpenAI for mathematical discussions and assistance with editing this manuscript. I take full responsibility for the mathematical arguments, references, and final text.

\section*{Statements and Declarations}
\noindent\textbf{Funding.} This material is based upon work supported by the National Science Foundation under Grant No.~DMS-2424139, while the author was in residence at the Simons Laufer Mathematical Sciences Institute in Berkeley, California, during the Fall 2025 semester.

\noindent\textbf{Competing interests.} The author has no relevant financial or non-financial interests to disclose.

\noindent\textbf{Data availability.} No datasets were generated or analysed in this theoretical study.


\end{document}